\def\norm#1{\|#1\|}
\newcolumntype{P}[1]{>{\centering\arraybackslash}p{#1}}
\newcolumntype{M}[1]{>{\centering\arraybackslash}m{#1}}
\providecommand{\DIFdeltex}[1]{{\protect\color{red}\sout{#1}}}                      %DIF PREAMBLE
\newif\ifdiff
  \newcommand{\del}[1]{\DIFdeltex{#1}}
  \newcommand{\del}[1]{}
\newcommand{\tr}[1]{\textcolor{red}{#1}}
\def\argmin{\mathop{\rm argmin}}
\def\diag{\mathrm{diag}}
\def\rank{\mathrm{rank}}
\def\vu{{\bf u}}
\def\vv{{\bf v}}
\def\vw{{\bf w}}
\def\cA{{\cal A}}
\def\cS{{\cal S}}
\def\R{\mathbb{R}}
\def\mE{\mathbb{E}}
\def \tr{\textcolor{black}}
\title{{Selecting Regularization Parameters for Nuclear Norm Type Minimization Problems}\thanks{Submitted to the editors \today.  This work is supported by NSFC Grant Nos. 11871210, 11971215 and 61971292; HKRGC Grants Nos. CUHK14301718, CityU11301120, and C1013-21GF; and CityU Grant 9380101.}}
\author{Kexin Li\thanks{Key Laboratory of Computing and Stochastic Mathematics (LCSM), School of Mathematics and Statistics, Hunan Normal University, Changsha, Hunan, China.  (Corresponding author: Y. Wen  Email: wenyouwei@gmail.com)}
\and Hongwei Li\thanks{Beijing Advanced Innovation Center for Imaging Theory and Technology Capital Normal University, Beijing, China. }
\and Raymond H. Chan\thanks{Department of Mathematics, City University of Hong Kong, Tat Chee Avenue, Kowloon Tong, Hong Kong SAR, China; Hong Kong Centre for Cerebro-Cardiovascular Health Engineering. }\and You-wei Wen\footnotemark[2] }
\begin{document}
\maketitle

\begin{abstract}
The reconstruction of low-rank matrix from its noisy observation finds its usage in many applications. It can be reformulated into a constrained nuclear norm minimization problem, where the bound $\eta$ of the constraint is explicitly given or can be estimated by the probability  distribution of the noise.
When the Lagrangian method is applied to find the minimizer, the solution can be obtained by the singular value thresholding operator where the
thresholding parameter $\lambda$ is related to the Lagrangian multiplier. In this paper, we first show that the Frobenius norm of the discrepancy between the minimizer and the observed matrix is a strictly  increasing function of $\lambda$. From that we derive a closed-form solution for $\lambda$ in terms of $\eta$. The result can be used to solve the constrained nuclear-norm-type minimization problem when $\eta$ is given. For the unconstrained nuclear-norm-type regularized problems, our result allows us to automatically choose a suitable regularization parameter by using the discrepancy principle. The regularization parameters obtained are comparable to (and sometimes better than) those obtained by Stein's unbiased risk estimator (SURE) approach while the cost of solving the minimization problem can be reduced by {11--18 times}. Numerical experiments with both synthetic data  and real MRI data are performed to validate the proposed approach.
%\tr{do we need to emphasize $\norm{Y}_F^2>\eta$ and the objective function is the nuclear norm or weighted nuclear nuclear with positive weights?}
\end{abstract}

\begin{keywords}
low-rank matrix, singular value thresholding, discrepancy principle, nuclear norm, {regularization parameter}
\end{keywords}
\begin{AMS}
68U10, 94A08, 90C99, 65K99
\end{AMS}

\section{Introduction}
In the past {two decades}, the low-rank matrix recovery problem has attracted much attention in computer vision, pattern recognition, image processing, machine learning, and optimization communities.
The problem arises in many applications such as denoising \cite{CandesUnbiased, Gavish2017opt}, inpainting \cite{dong2013nonlocal, Ma2011Fixed},
matrix completion \cite{2016robust, Candes2009exact}, background subtraction \cite{oh2015fast, wright2009robust}, and image alignment \cite{peng2012rasl:}.
In these applications, we often need to solve the constrained minimization problem
\begin{equation}\label{constrained}
\min_{\norm{X-Y}_F^2\leq \eta}f(X)
\end{equation}
to recover an unknown low-rank matrix $X\in \R^{m \times n}$ from its noisy observation $Y$, which is corrupted by Gaussian white noise.
Here $\|\cdot\|_{F}$ denotes the Frobenius norm defined as the square root of the sum of the squares of its elements. Without loss of  generality, we assume that $m\leq n$. The function $f(X)$ is a regularization function of $X$ which incorporates the prior information on the cleaned matrix $X$, and it  can be the rank of $X$, the nuclear norm (NN) \cite{CaiA, Candes2009exact}, the truncated nuclear norm (TNN) \cite{hu2012fast,lee2016computationally, liu2015truncated} or the generalized weighted nuclear norm (GWNN) \cite{Gu2014WNNM, Yan2019Low} of the matrix $X$, etc.
{When $\norm{Y}_F^2\leq \eta$, we have a trivial solution $X=\mathbf 0$ as $f(\mathbf 0)=0$. In order to exclude this case, we assume that
\begin{equation}\label{normYbound}
\norm{Y}_F^2> \eta >0
\end{equation}
throughout this paper and we look for non-trivial solutions of \eqref{constrained}.}

Generally, a constrained minimization problem can be solved by
projected gradient descent method \cite{1997Nonlinear, 2003Con},
Lagrangian multiplier method \cite{1997Nonlinear} and so on.
When the projected gradient descent method is applied, the objective function $f(X)$ should be differentiable.
However, the functions $f(X)$  considered in this paper are non-differentiable, so the projected gradient descent method cannot be directly applied to find the minimizers.

The Lagrangian multiplier method is also widely applied to solve the constrained minimization problem, which converts the constrained minimization problem \eqref{constrained} into a regularized one as follows
\begin{equation}\label{unconstrain}
\min_X f(X)+\frac{1}{2\lambda}\norm{X-Y}_F^2,  %-\eta\right).
\end{equation}
where ${1}/{\lambda}$ is the Lagrangian multiplier associated with the inequality constraint.
In regularization theory, $\lambda$ is also called the regularization parameter. If the singular value decomposition {(SVD)} of $Y$ is given, the optimal solution of \eqref{unconstrain} can be obtained by soft thresholding of $Y$'s singular values when the regularization function is the nuclear norm \cite{CaiA, Candes2009exact}. The {threshold value} equals to the regularization parameter $\lambda$, see Theorem \ref{th:SVT} below or \cite{CaiA}. By an abuse of notation, we use $\lambda$ to refer to both the threshold value and the regularization parameter.
It is a crucial issue to choose $\lambda$. If $\lambda$ is too small, then the shrinkage is insufficient and the corresponding solution is still noisy; while if $\lambda$ is too large, informative structures might be removed together with the noise
and the solution is a poor approximation of the cleaned matrix.

Ideally, the optimal $\lambda$ should minimize the {mean-squared error (MSE)} between the true matrix and the estimated matrix.
It is difficult to implement this approach because the true matrix is unknown.
Some literatures have discussed how to choose an optimal regularization parameter for the unconstrained minimization problem \eqref{unconstrain}.
In \cite{gavish2014the}, Gavish and Donoho studied the asymptotic MSE (AMSE) and proposed the AMSE-optimal choice of  {hard threshold}, which is $(4/\sqrt{3})\sqrt{n}\tau $ when $m=n$ and {the standard deviation  $\tau$ of the noise} is known.
In \cite{Gavish2017opt}, Gavish {\it et al.} applied the asymptotic framework to find the optimal  {threshold value}, either analytically or numerically, for a variety of loss functions, including Frobenius norm, the nuclear norm  and the operator norm.
In \cite{yadav2017a}, Yadav {\it et al.} applied random matrix theory to infer the AMSE without the knowledge of {the true matrix} $X$.
In \cite{CandesUnbiased}, Cand\'es {\it et al.} applied Stein's unbiased risk estimator (SURE), an unbiased estimate of MSE, to choose the  {soft threshold}.
They also gave an expression for the divergence of the estimated matrix with respect to the observation matrix.
In \cite{bigot2016generalized}, Bigot {\it et al.} derived the corresponding generalized SURE (GSURE) formula for different noise types, which further realized the adaptive thresholding of singular values.
%Under mild assumptions on the estimation mapping,
Deledalle {\it et al.} \cite{2014stein}  proposed the Stein unbiased gradient estimator of the risk (SUGAR) {and} provided an asymptotically unbiased estimate of the gradient of the risk,  which is an effective strategy to automatically optimize a collection of parameters.

In fact, the SURE approach requires the knowledge of the noise variance. When the knowledge of the noise variance is unknown,
the generalized cross validation (GCV) method can be applied to {choose} the regularization parameter \cite{golub1979generalized}.
% is a well-known approach to estimate the regularization parameter without the knowledge of noise variance.
In \cite{josse2016adaptive}, Josse {\it et al.} integrated the SURE method and the GCV method to handle the case when the variance is unknown.
{Furthermore, Hansen {\it et al.} \cite{1993The} proposed a method for choosing the regularization parameter based on the L-curve, which is a log-log plot of the norm of a regularized solution versus the norm of the corresponding residual norm. The optimal regularization parameter is the corner point of the L-curve, see \cite{1994Using, hansen1992analysis}. }
%In \cite{hoff2007model}, Hoff proposed a model-based approach by providing prior distributions and posterior estimation for the rank of $X$ and the components of its singular value decomposition.

The discrepancy principle is also widely applied to choose the regularization parameter \cite{Stephan2010Morozov, Morozov1984, 1999Applications, 2012Parameter}.
It states that the regularization parameter $\lambda$ should be chosen such that the minimizer $X(\lambda)$ of \eqref{unconstrain} has a discrepancy that equals to the  bound $\eta$ if  $\eta$ is explicitly given, that is
%The minimizer $X$ of \eqref{constrained} is a function with respect to $\lambda$.
%Using the complementary condition and the assumption of the active constraint, we have
\begin{equation}\label{rootalpha}
\norm{X(\lambda)-Y}_F^2=\eta.
\end{equation}
Obviously, \eqref{rootalpha} is the complementary condition associated with the constrained minimization problem \eqref{constrained},
which shows that there is a mapping between the parameter $\lambda>0$ and the bound $\eta$.
%\del{ when $\norm{Y}_F^2> \eta$ {under} some assumptions.}
%\tr{[[[the last part of the sentence is unclear: we already assume $\norm{Y}_F^2> \eta$. Do we mean "when $\norm{Y}_F^2> \eta$ AND under some assumptions?]]]}
%\tb{Delete ``when $\norm{Y}_F^2> \eta$ {under} some assumptions''? or change to `` when $\norm{Y}_F^2> \eta$ and {under} some assumptions''. }
Given a regularization parameter $\lambda$, it is trivial to obtain the bound $\eta$ through directly calculating $X(\lambda)$ and the squared Frobenius norm of $X(\lambda)-Y$.
It is a forward problem.
However, given the bound $\eta$ it is not trivial to obtain the corresponding regularization parameter $\lambda$ because it is the root of the nonlinear equation (\ref{rootalpha}) and it is an inverse problem.
{Existing work on the discrepancy principle  is mainly for  Tikhonov \cite{Stephan2010Morozov, bonesky2008} and TV \cite{2012Parameter} regularization  terms. To the best of our knowledge, there is no literature on considering how to obtain  $\lambda$ for a given $\eta$ in the nuclear-norm-type regularization minimization problems.}

In this paper, we aim precisely to obtain $\lambda$ for any given $\eta$ in \eqref{rootalpha} for nuclear-norm type minimization problems. We first {show} that the norm of the residual
$\norm{X(\lambda)-Y}_F^2$ is a  strictly increasing  function with respect to $\lambda$. Hence we derive a closed form formula for $\lambda$ associated with the constraint \eqref{rootalpha}.
This result can be applied to select an optimal regularization parameter $\lambda$ for the unconstrained minimization problem \eqref{unconstrain}.
Compared with the SURE methods or the GCV methods, the proposed method can directly compute $\lambda$ without solving a complicated and intricate optimization problem. We will see that the cost of solving the unconstrained minimization problem \eqref{unconstrain} (and hence the constrained minimization problem \eqref{constrained} too) is reduced by {11} times when compared with SURE methods. We remark that our approach also provides a fast method for solving the constrained minimization problem \eqref{constrained} for any given $\eta$ by solving the corresponding unconstrained problem \eqref{unconstrain} with the corresponding $\lambda$.

Note that the low-rank matrix recovery problem usually involves finding the SVD  of matrices, and its computational complexity  is $O(mn\min\{m,n\})$. For large-scale problems, the cost of the traditional SVD is very expensive.
In the past ten years, randomized algorithms have been more and more widely used in low-rank matrix approximations \cite{2018faster, 2011A, oh2015fast, 2013regularization}.
Compared with classic algorithms, randomized algorithms involve fewer floating-point operations (flops) and are more effective for large-scale problems. Our strategy for choosing the  {regularization value} can also be easily incorporated into the randomized algorithms and the cost can be reduced by 18 times when compared with SURE methods.

{The outline of this paper is organized as follows.
	In Section \ref{Pre}, we consider the solution of the regularization model \eqref{unconstrain} when the penalty function is the nuclear norm or its variants.
	In  Section \ref{conreg}, we discuss the relationship between the regularized model and the constrained model. The estimation of the bound $\eta$ for the residual is also given.
	In Section \ref{Reg}, we solve \eqref{rootalpha} for the {regularization parameter} $\lambda$ in terms of the bound $\eta$. Then we consider the solution of the constrained model \eqref{constrained} for various nuclear-norm-type problems. {In  Section \ref{numerical}, numerical results are given to demonstrate the effectiveness of the proposed method.}
	Finally, a short conclusion is given in  Section \ref{Conclusion}.

\section{Regularized Minimization Model}\label{Pre}
{In this section, we introduce some basic notation and consider the solution for the regularization minimization model in \eqref{unconstrain}.}

\subsection{ Notation}
We first introduce the singular value decomposition (SVD).
Any $m\times n$ $(m\leq n)$ matrix $Y$  can be factorized as follows
$Y = U\Sigma V^T$
where $U=[\vu_1,\vu_2,\ldots,\vu_m]\in\R^{m\times m}$ and $V=[\vv_1,\vv_2,\ldots,\vv_m]\in{\R^{n\times m}}$
are orthogonal, and $\Sigma \in \R^{m\times m} $  is a diagonal matrix.
The column vectors $\vu_i$  and $\vv_i$ are  respectively called the $i$-th left and right singular vectors,
and the diagonal entry $\Sigma_{i,i} = \sigma_{i}$ is called the $i$-th singular value of the matrix $Y$. We assume
$\sigma_1 \geq \sigma_2 \geq ... \geq \sigma_m\geq 0$.

The SVD can also be reformulated by the outer product form  $Y=\sum_{i=1}^m\sigma_i\vu_i\vv_i^T$.
%We remark that the number of nonzero singular values equals to the rank of the matrix.
%In this sense, the rank of the matrix can be defined by the $\ell_0$ norm of the singular values.
A $k$-truncated matrix of $Y$ is defined as
$Y_k\equiv\sum_{i=1}^k\sigma_{i}\vu_i\vv_i^T. $
By applying Eckart-Young-Mirsky theorem \cite{1936The, 1960Symmetric}, we know that
$$
Y_k=\argmin_{\rank(X)\leq k}\norm{X-Y}_F^2.
$$
We remark that the $k$-truncated matrix of $Y$ is not unique  if $\sigma_{k+1}=\sigma_{k}$.

The main aim in \eqref{constrained} is to recover an unknown matrix $X$ from the noisy observation $Y$.
One common assumption is that the matrix $X$ has low-rank structure.
It is natural to choose the objective function $f(X)=\rank(X)$ and consider the following problem
\begin{equation}\label{LRMA3}
\min_X  \rank(X)+\frac{1}{2\lambda}\norm{X-Y}_F^2.
\end{equation}
%The famous Eckart-Young-Mirsky theorem solves the problem of approximating a matrix by one of lower rank
The rank of a matrix can be defined as the nonzero numbers of its singular values, i.e.,  the $\ell_0$-norm  of the singular values.
The solution to (\ref{LRMA3}) is given by the hard thresholding operator $X=\sum_{i=1}^m\mathrm{HT}_\lambda(\sigma_{Y,i})\vu_i\vv_i^T$,
where $\mathrm{HT}_\lambda(x)$ is a map defined by
\[
\mathrm{HT}_\lambda(x)=
\left\{
\begin{array}{ll}
x,& x>\sqrt{2\lambda},\\
0 \;\mathrm{or}\; x, & x=\sqrt{2\lambda},\\
0, & x<\sqrt{2\lambda}.
\end{array}
\right.
\]

\subsection{Nuclear Norm}
%The rank minimization problem is NP-hard because the rank function is not continuous and non-convex.
In compressed sensing, in order to characterize the sparsity of the signal, the $\ell_1$-norm is usually used to approximate the $\ell_0$-norm.
Similarly, in the rank minimization problem (\ref{LRMA3}), the rank of a matrix can be approximately replaced by the $\ell_1$-norm of the singular values, which is the nuclear norm of the matrix.
\begin{definition}
Given a matrix $X\in \R^{m\times n}$ with $\sigma_{X,i}$ being its $i$-th singular values,
the nuclear norm (NN) $\norm{X}_*$ of $X$ is defined as the sum of its singular values, i.e., $\norm{X}_*\equiv\sum_{i=1}^m \sigma_{X,i}$.
\end{definition}

The nuclear norm regularized minimization problem is given by
\begin{equation}\label{NNM}
\min_X \|X\|_*+\frac{1}{2\lambda}\norm{X-Y}_F^2.
\end{equation}
The following theorem states that the optimal minimizer of \eqref{NNM}
%, which is denoted as $\widehat{X}(\lambda)$,
is a function with respect to the regularization parameter $\lambda$.

\begin{theorem}\label{th:SVT} \cite{CaiA}
Let $Y$ be an $m\times n$ matrix,  $\vu_i$ and $\vv_i$ be the $i$-th left and right singular vectors of $Y$, and
$\sigma_{Y,i}$ be the $i$-th singular value.
Then the solution of \eqref{NNM} is given by $\widehat{X}(\lambda)= \operatorname{SVT}_\lambda(Y)$, where $\operatorname{SVT}_\lambda(Y)$ is the singular value thresholding ({SVT}) operator defined as
\begin{equation}\label{svt}
\widehat{X}(\lambda)={\operatorname{SVT}}_\lambda(Y)\equiv\sum_{i=1}^{m}(\sigma_{Y,i}-\lambda)_+ \vu_i\vv_i^T.
\end{equation}
{Here $(x)_+\equiv \max(x,0)$.}
\end{theorem}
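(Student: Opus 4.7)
The plan is to prove Theorem \ref{th:SVT} by a direct subdifferential argument, exploiting the fact that the objective in \eqref{NNM} is strictly convex (the Frobenius norm term is strictly convex and the nuclear norm is convex), so existence and uniqueness of the minimizer are automatic and it suffices to verify the first-order optimality condition at the proposed candidate $\widehat{X}(\lambda)=\sum_{i=1}^m(\sigma_{Y,i}-\lambda)_+\vu_i\vv_i^T$.

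First, I would set up the optimality condition. Since $\|\cdot\|_*$ is convex and $\frac{1}{2\lambda}\|X-Y\|_F^2$ is smooth, $X$ solves \eqref{NNM} if and only if
\begin{equation*}
\tfrac{1}{\lambda}(Y-X)\in\partial\|X\|_*, \qquad \text{i.e.,}\qquad Y-X\in\lambda\,\partial\|X\|_*.
\end{equation*}
Next I would recall the characterization of the subdifferential of the nuclear norm: if $X$ has reduced SVD $X=U_0\Sigma_0V_0^T$ with rank $r$, then
\begin{equation*}
\partial\|X\|_*=\bigl\{U_0V_0^T+W \ :\ U_0^TW=0,\ WV_0=0,\ \|W\|_2\le 1\bigr\},
\end{equation*}
where $\|\cdot\|_2$ denotes the spectral (operator) norm. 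This is the key structural fact I will rely on, and I would cite it rather than reprove it.

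Then I would verify the optimality condition at $\widehat{X}(\lambda)$. Let $I=\{i:\sigma_{Y,i}>\lambda\}$ and split $Y=\sum_{i\in I}\sigma_{Y,i}\vu_i\vv_i^T+\sum_{i\notin I}\sigma_{Y,i}\vu_i\vv_i^T$. By construction $\widehat{X}(\lambda)=\sum_{i\in I}(\sigma_{Y,i}-\lambda)\vu_i\vv_i^T$, so
\begin{equation*}
Y-\widehat{X}(\lambda)=\lambda\sum_{i\in I}\vu_i\vv_i^T+\sum_{i\notin I}\sigma_{Y,i}\vu_i\vv_i^T=\lambda\bigl(U_IV_I^T+W\bigr),
\end{equation*}
where $U_I,V_I$ collect the columns $\vu_i,\vv_i$ for $i\in I$ and $W=\lambda^{-1}\sum_{i\notin I}\sigma_{Y,i}\vu_i\vv_i^T$. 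Orthonormality of the singular vectors gives $U_I^TW=0$ and $WV_I=0$, while $\sigma_{Y,i}\le\lambda$ for $i\notin I$ yields $\|W\|_2\le 1$. Hence $Y-\widehat{X}(\lambda)\in\lambda\,\partial\|\widehat{X}(\lambda)\|_*$, which is exactly the optimality condition, and strict convexity closes the argument.

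The only delicate point I anticipate is the boundary case when some singular values of $Y$ equal $\lambda$: these indices can be placed in either $I$ or its complement without affecting $\widehat{X}(\lambda)$, but one must check the subdifferential characterization still goes through (it does, since $\sigma_{Y,i}-\lambda=0$ makes those terms drop out of $\widehat{X}(\lambda)$ and contribute a rank-one term of spectral norm exactly $1$ to $W$, which is still admissible). Non-uniqueness of the SVD when singular values repeat causes no issue either, because the outer-product $\vu_i\vv_i^T$ summed over an equal-singular-value block is invariant under the corresponding orthogonal change of basis.
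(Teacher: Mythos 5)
Your argument is correct and is essentially the proof given in the cited reference \cite{CaiA} (the paper itself states this theorem without proof, attributing it to that source): one verifies the first-order condition $Y-\widehat{X}(\lambda)\in\lambda\,\partial\|\widehat{X}(\lambda)\|_*$ using Watson's characterization of the nuclear-norm subdifferential, and strict convexity of the objective gives uniqueness. Your treatment of the boundary case $\sigma_{Y,i}=\lambda$ and of repeated singular values is also handled correctly.
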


\subsection{Truncated Nuclear Norm}\label{subsectionTNN}
The nuclear norm shrinks all singular values equally and the large singular values are usually over-penalized.
%If the singular values of a matrix within the true rank are small, the nuclear norm tends to recover a matrix whose rank is smaller than the true rank \cite{2016Partial}. Therefore,
To overcome the shortcomings of nuclear norm,
Hu {\it {\it et al.}} \cite{hu2012fast} used the truncated nuclear norm  instead of the standard nuclear norm in order to keep the largest $r$ singular values unchanged while shrinking the other singular values.
%The truncated nuclear norm is defined as the partial sum of singular values.
\begin{definition}
Given a matrix $X\in \R^{m\times n}$ with {$\sigma_{X,i}$} being its $i$-th singular values,
the truncated nuclear norm (TNN) $\norm{X}_r$ of $X$ is defined as the partial sum  of its singular values, i.e., $\norm{X}_r\equiv\sum_{i=r+1}^m \sigma_{X,i}$,  where $r<m$ is an integer.
\end{definition}

The TNN regularized minimization problem is given by
\begin{equation}\label{tnnm}
\min _{X} \norm{X}_r+ \frac{1}{2\lambda}\norm{Y-X}_{F}^{2}.
\end{equation}
The TNN has been proposed in low-rank {matrix} recovery and matrix completion problem \cite{hu2012fast,lee2016computationally, liu2015truncated}.
The TNN is non-convex and the solution of TNN regularized minimization problem is generally non-trivial.
In  \cite{hu2012fast}, the Von Neumann's trace inequality \cite{1975Mirsky} was  utilized to handle the non-convexity. The idea is to rewrite TNN into an equivalent form
$$\norm{X}_r=\norm{X}_*- \min_{(U,V)\in \cA}\mathrm{trace}(UXV^T)$$
where $\cA=\{(U,V): UU^T=I,VV^T=I, U\in\R^{r\times m}, V\in\R^{r\times n}\}$.

According to Von Neumann's lemma,  the inner product of two matrices is always bounded by the sum of the products of their corresponding singular values,
while the maximum of the inner product of two matrices can only be achieved when these two matrices have the same left and right singular vectors.
Based on these facts,   the optimal solution of \eqref{tnnm}  can be expressed by the partial singular value thresholding operator \cite{2016Partial}.

\begin{theorem} %\cite{2016Partial}
For any $\lambda>0$, {a global solution} of \eqref{tnnm} with a target rank $r<m$ is given by the partial singular value thresholding ({PSVT})
\begin{equation}\label{PSVT}
\operatorname{PSVT}_{r,\lambda}(Y)\equiv\sum_{i=1}^{r} \sigma_{Y,i} \vu_i\vv_i^T+ \sum_{i=r+1}^{m} (\sigma_{Y,i}-\lambda)_+ \vu_i\vv_i^T.
\end{equation}
\end{theorem}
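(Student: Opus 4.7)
The plan is to lower-bound the objective $F(X) = \|X\|_r + \frac{1}{2\lambda}\|Y-X\|_F^2$ by a function that separates over the singular values of $X$, and then to exhibit $\operatorname{PSVT}_{r,\lambda}(Y)$ as an attainer of that bound. The key tool is Von Neumann's trace inequality $\langle X, Y \rangle \leq \sum_{i=1}^{m} \sigma_{X,i}\,\sigma_{Y,i}$, which holds with equality precisely when $X$ and $Y$ admit a simultaneous SVD, i.e., share left and right singular vectors in matching order. The excerpt's rewriting of $\|X\|_r$ via the set $\mathcal{A}$ is simply a packaged form of this inequality, so invoking it directly leads to the cleanest argument.

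To implement this, I would expand $\|Y-X\|_F^2 = \sum_i \sigma_{Y,i}^2 + \sum_i \sigma_{X,i}^2 - 2\langle X, Y\rangle$ and apply the Von Neumann bound to the cross term. After discarding the constant $\frac{1}{2\lambda}\sum_i \sigma_{Y,i}^2$, the remainder is bounded below by $\sum_{i=1}^{r} h_i(\sigma_{X,i}) + \sum_{i=r+1}^{m} g_i(\sigma_{X,i})$, where $h_i(s) = \frac{1}{2\lambda}(s-\sigma_{Y,i})^2 - \frac{\sigma_{Y,i}^2}{2\lambda}$ and $g_i(s) = s + \frac{1}{2\lambda}(s^2 - 2s\,\sigma_{Y,i})$. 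A one-variable analysis on each term restricted to $s \geq 0$ yields the pointwise minimizers $s^* = \sigma_{Y,i}$ for $i \leq r$ (from completing the square in $h_i$) and $s^* = (\sigma_{Y,i} - \lambda)_+$ for $i > r$ (from the first-order condition on $g_i$ truncated at zero), which is precisely the singular-value pattern prescribed by $\operatorname{PSVT}_{r,\lambda}(Y)$.

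Two things then remain. First, the proposed values must form an admissible (non-negative, non-increasing) singular-value sequence; this follows from the monotonicity of $(\cdot - \lambda)_+$ and the observation $\sigma_{Y,r} \geq \sigma_{Y,r+1} \geq (\sigma_{Y,r+1} - \lambda)_+$, so no ordering violation occurs at the interface $i = r$. Second, pairing these optimal singular values with $Y$'s own singular vectors, as in the definition of PSVT, achieves equality in Von Neumann's inequality and thus attains the scalar lower bound; equality in both places gives global optimality. The main subtlety, and the reason the statement reads \emph{a} global solution rather than \emph{the} global solution, is that the TNN is non-convex: when $Y$ has a repeated singular value at the truncation boundary, in particular $\sigma_{Y,r} = \sigma_{Y,r+1}$, the SVD of $Y$ is not unique, different orthonormal bases for the repeated-value subspace yield different PSVT matrices, and every such choice attains the same optimal objective value. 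Handling this non-uniqueness carefully, together with the edge case $\sigma_{Y,i} = \lambda$ where $(\sigma_{Y,i}-\lambda)_+$ can be realized either with a zero or nonzero term, is the only delicate point in an otherwise routine Von-Neumann-plus-scalar-optimization argument.
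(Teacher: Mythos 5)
Your proposal is correct and takes essentially the same route the paper indicates: the paper does not prove this theorem itself but cites \cite{2016Partial} and sketches precisely this argument via Von Neumann's trace inequality (equality when the singular vectors match) followed by separable scalar minimization over the singular values. Your fleshed-out version, including the ordering check at the interface $i=r$ and the remark on non-uniqueness when $\sigma_{Y,r}=\sigma_{Y,r+1}$, is a valid completion of that sketch.
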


We note that the $\operatorname{PSVT}$ operator is comprised of two terms: the first term is the $r$-truncated matrix $Y_r \equiv \sum_{i=1}^r\sigma_{Y,i}\vu_i\vv_i^T$,
and the second term is the singular value thresholding operator of $Z=Y-Y_r$, i.e., ${\operatorname{SVT}}_\lambda(Z)=\sum_{i=r+1}^{m} (\sigma_{Y,i}-\lambda)_+ \vu_i\vv_i^T$.
Hence we have
\[
\operatorname{PSVT}_{r,\lambda}(Y)=Y_r+{\operatorname{SVT}}_\lambda(Y-Y_r),
\]
which implies that the truncated nuclear norm minimization problem can be solved through the nuclear norm minimization problem. For the constrained minimization problem, we have
\begin{equation}\label{tnn2nn}
\argmin_{\norm{Y-X}_F^2\leq \eta} \norm{X}_{r}\equiv Y_r + \argmin_{\norm{Z-X}_F^2\leq \eta} \norm{X}_{*},
\end{equation}
where $Z=Y-Y_r$.
Thus the constrained minimization problem for the truncated nuclear norm can be reformulated as that of the nuclear norm.

\subsection{Weighted Nuclear Norm}\label{wnn}
To improve the flexibility of the nuclear norm, Gu {\it et al.}  \cite{Gu2014WNNM} proposed a weighted nuclear norm to  approximate the rank function, which has good performance in image denoising problem.
\begin{definition}
Given a matrix $X\in \R^{m\times n}$ with $\sigma_{X,i}$ being its $i$-th singular values and the non-negative weight vector $\vw=(w_1,w_2,\ldots,w_m)$, %with \tr{$0< w_1\leq w_2\leq...\leq w_m $},
the weighted nuclear norm (WNN) $\norm{X}_{\vw,*}$ of $X$ is defined by $\norm{X}_{\vw,*}\equiv\sum_{i=1}^m w_i \sigma_{X,i}$.
\end{definition}

We remark that the standard nuclear norm and the truncated nuclear norm can be regarded as the special version of the weighted nuclear norm, and {the weight vectors are $\vw=(1,\ldots,1)$ for the standard nuclear norm and $\vw=(0,\ldots,0,1,\ldots,1)$ for the truncated nuclear norm.}

We emphasize that the choice of the weights is very important.
A reasonable weight vector should guarantee that the resulting singular values are in a non-increasing order.
When the weights are arranged in non-decreasing order,
%the larger the singular value is, the smaller the penalty and change is.
larger singular values should be less penalized.
In \cite{Gu2014WNNM}, the weights are suggested to be
$$
w_i=(\sigma_{X,i}+\epsilon)^{-1}, \quad i=1,...,m,
$$
where the parameter $\epsilon$ is a sufficiently small positive number in order to avoid dividing by zero and should be set slightly smaller than the expected nonzero singular value of $X$.
In this case, $\norm{X}_{\vw,*}$  approximates the number of the nonzero singular values of $X$,
i.e., the rank of $X$.
In \cite{huang2018rank}, Huang {\it et al.} generalized the weights to
\begin{equation}\label{pnorm}
w_i=(\sigma_{X,i}+\epsilon)^{p-1}, \quad  0\leq p<1.
\end{equation}
In this way, $\norm{X}_{\vw,*}$  approximates  the Schatten $p$-norm of $X$, which is defined by $ (\sum_{i=1}^m \sigma_{X,i}^p  )^{1/p}$.
Because the singular values  $\sigma_{X,i}$ are unknown,
an iterative approach is applied to estimate them.
%{Note that when $p=0$, the weights in \eqref{pnorm} boils down to the WNN in \cite{Gu2014WNNM}.
It is proved in \cite{huang2018rank} that the optimal solution of the reweighted nuclear norm minimization problem is also a solution of the Schatten $p$-norm minimization problem.

In this paper, the weight vector defined in \cite{huang2018rank} ({i.e.,} {\eqref{pnorm}}) is adopted,
and we call the corresponding norm the generalized weighted nuclear norm (GWNN).  Although the weighted nuclear norm is non-convex, its global optimal solution can still be obtained.
Similar to Theorem \ref{th:SVT}, the closed form solution for the reweighted problem \eqref{WNNM} can be represented by weighted singular value thresholding.
\begin{theorem} \cite{gu2017weighted}
For any $\lambda>0$, $Y\in \R^{m\times n},$ if the weights $\{w_i\}_{i=1}^m$ satisfy $0\leq  w_1\leq w_2\leq...\leq w_m $, then {a global solution} of the following minimization problem
\begin{equation}\label{WNNM}
\min _{X} \norm{X}_{{\vw},*} + \frac{1}{{2\lambda}}\norm{Y-X}_{F}^{2}
\end{equation}
is given by the weighted singular value thresholding ({WSVT}) operator
\begin{equation}\label{WSVT}
\widehat{X}_{{\vw}}(\lambda)=\operatorname{WSVT}_\lambda(Y)\equiv\sum_{i=1}^{m}(\sigma_{Y,i}-\lambda w_i)_+ \vu_i\vv_i^T.
\end{equation}
\end{theorem}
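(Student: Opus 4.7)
The plan is to reduce the matrix optimization problem to a scalar problem over the singular values via Von Neumann's trace inequality (already invoked earlier in Section~\ref{subsectionTNN}) and then exploit the ordering assumption on the weights to decouple it coordinatewise.

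First, I would expand the Frobenius term using the SVD of $Y = U\Sigma V^T$. For any candidate $X$ with singular values $\sigma_{X,1}\geq\sigma_{X,2}\geq\cdots\geq\sigma_{X,m}\geq 0$, write
\[
\tfrac{1}{2\lambda}\norm{Y-X}_F^2 = \tfrac{1}{2\lambda}\bigl(\norm{Y}_F^2 + \norm{X}_F^2 - 2\,\mathrm{tr}(Y^T X)\bigr).
\]
By Von Neumann's trace inequality, $\mathrm{tr}(Y^T X)\leq \sum_{i=1}^m \sigma_{Y,i}\sigma_{X,i}$, with equality precisely when $X$ and $Y$ admit a simultaneous SVD (i.e., share left and right singular vector systems in the order dictated by matching singular values). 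Since $\norm{X}_{\vw,*}$ depends only on the singular values of $X$, this gives the lower bound
\[
\norm{X}_{\vw,*} + \tfrac{1}{2\lambda}\norm{Y-X}_F^2 \;\geq\; \sum_{i=1}^m w_i\sigma_{X,i} + \tfrac{1}{2\lambda}\sum_{i=1}^m (\sigma_{Y,i}-\sigma_{X,i})^2,
\]
and the bound is attained on the subclass of matrices of the form $X = U\,\mathrm{diag}(\sigma_{X,1},\ldots,\sigma_{X,m})V^T$ using the same $U$, $V$ as $Y$.

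Next, I would minimize the right-hand side over scalar variables $\sigma_{X,i}\geq 0$ subject to the monotonicity constraint $\sigma_{X,1}\geq\cdots\geq\sigma_{X,m}\geq 0$. The cost decouples as a sum of independent one-dimensional convex problems
\[
\min_{\sigma\geq 0}\; w_i\sigma + \tfrac{1}{2\lambda}(\sigma_{Y,i}-\sigma)^2,
\]
whose unique minimizer, by setting the subdifferential to zero, is the soft-threshold $\sigma_i^\star = (\sigma_{Y,i}-\lambda w_i)_+$. The crucial step is then to verify that these unconstrained scalar minimizers automatically satisfy the ordering constraint: for $i<j$, the hypothesis $w_i\leq w_j$ combined with $\sigma_{Y,i}\geq\sigma_{Y,j}$ gives $\sigma_{Y,i}-\lambda w_i \geq \sigma_{Y,j}-\lambda w_j$, so applying $(\cdot)_+$ preserves the inequality and $\sigma_i^\star\geq\sigma_j^\star$. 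Hence the unconstrained minimizers are feasible, so they also solve the constrained scalar problem.

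Combining the two reductions, the global minimum is achieved by $\widehat{X}_{\vw}(\lambda) = \sum_{i=1}^m (\sigma_{Y,i}-\lambda w_i)_+ \vu_i\vv_i^T$, which is exactly $\operatorname{WSVT}_\lambda(Y)$. The main subtlety, and where I would spend the most care, is the equality case in Von Neumann's inequality in the presence of repeated or vanishing singular values: the minimizer is then nonunique (the singular vector pair can be replaced within an eigenspace), and the theorem only claims \emph{a} global solution, which is consistent with this nonuniqueness. A secondary check is that the non-decreasing assumption on $\{w_i\}$ is genuinely used; without it, the decoupled minimizers need not be monotone and one would have to solve a constrained isotonic problem instead, so this assumption is not merely cosmetic.
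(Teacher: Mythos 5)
Your proof is correct. The paper states this theorem without proof (citing \cite{gu2017weighted}), but your argument --- reduce to the singular values via Von Neumann's trace inequality, solve the decoupled scalar soft-thresholding problems, and verify that the non-decreasing weights keep the thresholded values in non-increasing order so that the coordinatewise minimizers are feasible --- is precisely the route the paper itself sketches for the PSVT operator in Subsection~\ref{subsectionTNN} and reuses inside the proof of Lemma~\ref{lemma2}, and it is the standard argument of the cited reference.
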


If there are zero entries in the weight vector $\vw$, i.e., $\vw= (0,...,0,w_{r+1},...,w_m)$ with $0<w_{r+1}\leq w_{r+2}\leq...\leq w_m$, we have
\begin{equation}\label{nnwnn2pwnn}
\operatorname{WSVT}_\lambda(Y)=Y_r+\sum_{i=r+1}^{m}(\sigma_{Y,i}-\lambda w_i)_+ \vu_i\vv_i^T.
\end{equation}
%Notice that we consider the case where the weight is positive, i.e., $w_i>0$. If the weight vector , similar to the truncated nuclear norm, i.e.,
This is because the WSVT operator is comprised of the $r$-truncated matrix and the WSVT operator with nonzero weights.
Therefore, without loss of generalization, in the following when we talk about weighted nuclear norm, we only focus on the case where all the weights are positive.

\iffalse
For the subgradient of the nuclear-type norm, we have the following lemma.
\begin{lemma}\label{Xeqzero}
Let $f(X)$ be the nuclear norm or weighted nuclear norm with positive weights.
If $0\in \partial f(X)$, then we have $X=0$.
\end{lemma}

\begin{proof}
\tr{Because the standard nuclear norm can be written as the weighted nuclear norm with the weight vector $\vw=(1,\ldots,1)$, we assume that $f(X)=\| {X}\|_{{\bf w}, *}$. }

\tr{Assume by contradiction that there exists an $X\neq 0$ with $0\in \partial f(X)$.
Let $r>0$ be the rank of $X$ and $W_r\ne0$ be the $r\times r$ diagonal  matrix whose diagonal  entries are the first $r$ elements of $\vw$.
Let the SVD of $X=U\Sigma_XV^T$.
}

\tr{
According to \cite{Watson92,zha2017analyzing}, the subgradient of $f(X)$ is given by
\[
%\partial\| {X}\|_{{\bf w}, *}
\partial f\left(X\right)=\left\{ {U} W_{r}  {V}^{T}+ {Z}:   {U}^{T}  {Z}={0} {,} \ {Z}  {V}={0}, \sigma_{Z,j} \leq w_{r+j}, j=1, \ldots, m-r\right\}.
\]
%If $X\neq 0$, then $W_r\neq 0$.
When $0 \in \partial f\left(X\right)$, there exists a matrix $Z$ such that $ {U}^{T}  {Z}={0}, {Z}  {V}={0}$ and
$UW_rV^T+Z=0$. Multiplying the last equality by $U^T$ on the left and $V$ on the right, we obtain $W_r=0$, a contradiction.
Therefore, we have $X=0$.
}
\end{proof}
\fi
%

%\section{Constrained Minimization Model}\label{Reg}
\section{Constrained model and regularized model}\label{conreg}
%\section{Constrained Minimization Model}\label{conreg}
%\tr{In this section, we consider the solution of the constrained minimization model in \eqref{constrained}.}
Because the minimization problem of the truncated nuclear norm or weighted nuclear norm with zero weights can be reformulated as that of the nuclear norm or the weighted nuclear norm with positive weights respectively ({see \eqref{tnn2nn} or \eqref{nnwnn2pwnn}}),
in this section, we assume that the objective function $f(X)$ is either the nuclear norm or weighted nuclear norm with positive weights only.

We discuss the relationship between the constrained model \eqref{constrained} and the regularized  model \eqref{unconstrain}.
Mathematically, the two minimization models are equivalent in the sense that
given a parameter $\lambda$, there exists a corresponding $\eta$ such that the solution of \eqref{unconstrain} is also a solution of \eqref{constrained}, and vice versa.
The following lemmas give the detailed explanations.

\begin{lemma}\label{lemma1}
	If $X^\dagger$ is a {global minimizer} of the unconstrained regularized  problem \eqref{unconstrain} with $\lambda>0$,
	then there exists an $\eta\geq 0$ such that $X^\dagger$ is also a {global minimizer} of \eqref{constrained}.
\end{lemma}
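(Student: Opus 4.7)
The plan is to use the standard Lagrangian-style argument: take the bound $\eta$ to be exactly the squared discrepancy produced by $X^\dagger$ itself, and then verify feasibility and optimality directly.

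First I would set
\[
\eta := \|X^\dagger - Y\|_F^2 \geq 0,
\]
so that $X^\dagger$ is automatically feasible for \eqref{constrained}. It remains to show that $X^\dagger$ attains the minimum of $f$ over the feasible set $\{X : \|X-Y\|_F^2 \leq \eta\}$.

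For this I would argue by contradiction. Suppose there exists $\tilde X$ with $\|\tilde X - Y\|_F^2 \leq \eta$ and $f(\tilde X) < f(X^\dagger)$. Adding the quadratic penalty on both sides and using the choice of $\eta$, I get
\[
f(\tilde X) + \frac{1}{2\lambda}\|\tilde X - Y\|_F^2 \;<\; f(X^\dagger) + \frac{1}{2\lambda}\eta \;=\; f(X^\dagger) + \frac{1}{2\lambda}\|X^\dagger - Y\|_F^2,
\]
where the strict inequality on the left comes from combining $f(\tilde X) < f(X^\dagger)$ with $\frac{1}{2\lambda}\|\tilde X - Y\|_F^2 \leq \frac{1}{2\lambda}\eta$. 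This contradicts the hypothesis that $X^\dagger$ is a global minimizer of \eqref{unconstrain}, so no such $\tilde X$ can exist and $X^\dagger$ is a global minimizer of \eqref{constrained}.

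There is no real obstacle here: the statement is essentially a one-directional equivalence between the penalized and constrained formulations that holds for any proper $f$, and the argument does not use any structural property of the nuclear-norm-type regularizer. The only thing to be mindful of is that $\lambda > 0$ is needed so that the quadratic term can be divided through in the strict-inequality step, and $\eta \geq 0$ is automatic from the definition. I would not need to invoke Theorem~\ref{th:SVT} or any SVD machinery for this lemma; those become relevant only when one tries to quantify the mapping $\lambda \mapsto \eta$ more explicitly, which is the subject of the subsequent results.
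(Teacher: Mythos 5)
Your proposal is correct and matches the paper's argument: both take $\eta=\|X^\dagger-Y\|_F^2$ and compare objective values using the global optimality of $X^\dagger$ for \eqref{unconstrain}; the paper simply writes the inequality chain directly rather than by contradiction. No substantive difference.
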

\begin{proof}
	Define $\eta\equiv\|Y- X^\dagger\|_F^2 \ge 0$.
	For any $X$ satisfying $\|Y-X\|_F^2\leq \eta$, applying the fact that $X^\dagger$ is a
	{global minimizer} of \eqref{unconstrain}, we have
	\[
	\frac{\eta}{2\lambda}+  f(X^\dagger)=\frac{1}{2 \lambda}\|Y- X^\dagger\|_F^2+ f(X^\dagger)\leq \frac{1}{2\lambda}\|Y-X \|_F^2+ f(X)\leq \frac{\eta}{2\lambda} +  f(X).
	\]
	Hence $f(X^\dagger) \leq f(X)$,
	which implies that $X^\dagger$ is also a {global} minimizer of \eqref{constrained}.
\end{proof}

%\subsection{Complementary condition}
In order to exclude a trivial solution $X=0$ for the constrained minimization problem in \eqref{constrained}, we have assumed that $\norm{Y}_F^2 > \eta$, see \eqref{normYbound}.
We show that a minimizer of the constrained minimization problem  \eqref{constrained} always lies on  the boundary of the constraint if $\norm{Y}_F^2 > \eta$.
%this is that we have the complementary condition associated with the constraint.
\begin{lemma}\label{solboundary}
If $X^\dagger$ is a {global minimizer} of \eqref{constrained} with $\norm{Y}_F^2 > \eta > 0$, then we have
$\left\|Y-X^{\dagger}\right\|_{F}^{2}=\eta$.
\end{lemma}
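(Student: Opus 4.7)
The plan is a short proof by contradiction exploiting the positive homogeneity of the regularizer. Suppose, for contradiction, that $X^\dagger$ is a global minimizer of \eqref{constrained} with $\|Y-X^\dagger\|_F^2<\eta$ strictly. The goal is to exhibit a feasible competitor $\tilde X$ with $f(\tilde X)<f(X^\dagger)$.

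First I would observe that $X^\dagger\neq 0$: if $X^\dagger=0$ then $\|Y-X^\dagger\|_F^2=\|Y\|_F^2>\eta$ by the standing assumption \eqref{normYbound}, which is incompatible with $\|Y-X^\dagger\|_F^2<\eta$. Next, since the objective $f$ is either the nuclear norm or the weighted nuclear norm with strictly positive weights (the reductions \eqref{tnn2nn} and \eqref{nnwnn2pwnn} cover the other cases), $f$ is positively homogeneous of degree one, i.e.\ $f(tX)=t\,f(X)$ for $t\geq 0$, and $f(X)=0$ if and only if $X=0$. Hence $f(X^\dagger)>0$.

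Now consider the one-parameter family $X_t\equiv t X^\dagger$, $t\in[0,1]$. The map $t\mapsto \|Y-tX^\dagger\|_F^2$ is a continuous (quadratic) function of $t$, and it takes the value $\|Y-X^\dagger\|_F^2<\eta$ at $t=1$. By continuity there exists $t_0\in(0,1)$ with $\|Y-t_0 X^\dagger\|_F^2<\eta$, so $X_{t_0}$ is feasible for \eqref{constrained}. But by positive homogeneity,
\[
f(X_{t_0})=t_0\,f(X^\dagger)<f(X^\dagger),
\]
contradicting the global optimality of $X^\dagger$. Therefore $\|Y-X^\dagger\|_F^2=\eta$, as claimed.

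I do not foresee a serious obstacle. The only subtlety is to justify why $f(X^\dagger)>0$ whenever $X^\dagger\neq 0$, which is immediate for the nuclear norm and for the weighted nuclear norm once we are in the regime where all weights are positive — and the excerpt has already reduced the general truncated/zero-weight cases to this regime. Positive homogeneity then does all the work.
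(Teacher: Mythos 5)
Your proof is correct, and it takes a genuinely different --- and more elementary --- route than the paper's. The paper argues through the singular values: assuming $\|Y-X^{\dagger}\|_F^2<\eta$, it invokes Von Neumann's trace inequality to get $\sum_{i}(\sigma_{Y,i}-\sigma_{X^{\dagger},i})^2<\eta$, then decreases the smallest positive singular value of $X^{\dagger}$ by a small $\delta$ and reassembles a competitor $\widehat{X}=U\Sigma_{\widehat{X}}V^T$ built from $Y$'s singular vectors; positivity of the weights then gives $f(\widehat{X})<f(X^{\dagger})$. Your scaling argument $X_t=tX^{\dagger}$ bypasses the SVD entirely: it needs only that $f$ is positively homogeneous and vanishes exactly at $0$, so it is shorter, applies verbatim to any norm-type regularizer, and makes transparent exactly where the hypothesis $\norm{Y}_F^2>\eta$ enters (to rule out $X^{\dagger}=0$). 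What the paper's heavier construction buys is reuse: essentially the same Von Neumann perturbation is recycled in the proof of Lemma \ref{lemma2} to show that $X^{\dagger}$ must share its left and right singular vectors with $Y$, a fact your argument does not yield. Both proofs rely on the standing assumption of Section \ref{conreg} that $f$ is the nuclear norm or a weighted nuclear norm with strictly positive weights (your homogeneity step would fail for, e.g., the rank function, which is scale-invariant), and you flag this reduction correctly.
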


\begin{proof}
Let  $\sigma_{X^\dagger,i}$ and $\sigma_{Y,i}$ be the singular values of $X^\dagger$ and $Y$ respectively.
If, by contradiction, $X^{\dagger}$ is an interior point of the constrained set,
then we have
\[
\sum_{i=1}^m  (\sigma_{Y,i}-\sigma_{X^{\dagger},i})^2\leq
\norm{Y-X^\dagger}_F^2< \eta,
\]
where the first inequality is obtained by applying Von Neumann's trace inequality \cite{1975Mirsky}.
Let $\sigma_{X^\dagger,k}$ be the smallest positive singular value of $X$.
The above inequality implies that there exists  a positive scalar $\delta$ with $0<\delta<\sigma_{X^\dagger,k}$ such that
\begin{equation}\label{aaa}
\sum_{i=1}^{k-1}\left(\sigma_{Y,i}-\sigma_{X^\dagger,i} \right)^2+\left(\sigma_{Y,k}-\sigma_{X^\dagger,k}+\delta \right)^2+\sum_{i=k+1}^{m}\left(\sigma_{Y,i} \right)^2\leq\eta.
\end{equation}
Let $\Sigma_{\widehat{X}}$ be the diagonal matrix with the diagonal entries $\sigma_{\widehat{X},k}=\sigma_{X^\dagger,k}-\delta$ and $\sigma_{\widehat{X},i}=\sigma_{X^\dagger,i}$ for $i \neq k$.
Clearly, the diagonal entries of $\Sigma_{\widehat{X}}$ still keep the non-increasing ordering.

{Let the SVD of $Y$ be $Y=U \Sigma_Y V^T$ and we set} $\widehat{X}=U\Sigma_{\widehat{X}} V^T$.
By (\ref{aaa}), $\norm{Y-\widehat{X}}_F^2< \eta$, i.e., $\widehat{X}$ is a feasible solution of \eqref{constrained}. However, we have
\[
\sum_{i=1}^m w_i\sigma_{\widehat{X},i}=f(\widehat{X})<f(X^\dagger)=
\sum_{i=1}^m w_i\sigma_{X^\dagger,i},
\]
(for nuclear norm, all $w_i = 1$).
This contradicts the assumption that $X^\dagger$ is a global minimizer
of \eqref{constrained}.
Therefore, we must have $\left\|Y-X^{\dagger}\right\|_{F}^{2}=\eta$.
\end{proof}

\begin{lemma}\label{lemma2}
	Let $\norm{Y}_F^2 > \eta > 0$ and the objective function $f(X)$ in \eqref{constrained} be the nuclear norm or the weighted nuclear norm with weights $0 \tr{<}  w_1\leq w_2\leq...\leq w_m $. If $X^\dagger$ is a global minimizer of \eqref{constrained}, then there exists a parameter $\lambda> 0$ such that $X^\dagger$ is also a {global minimizer} of \eqref{unconstrain}.
\end{lemma}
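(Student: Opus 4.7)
The plan is to build the required $\lambda$ by inverting the map that sends $\lambda$ to the residual of the unconstrained minimizer, and then to transfer global optimality from the regularized problem back to $X^\dagger$ using the shared objective value on the boundary. Because the weighted nuclear norm with non-decreasing weights is not convex, I cannot invoke standard Lagrangian duality; instead I will exploit that the unconstrained problem \eqref{unconstrain} still has an explicit global minimizer given by $\operatorname{SVT}_\lambda$ or $\operatorname{WSVT}_\lambda$.

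First I would invoke Lemma \ref{solboundary} to record that $\|Y - X^\dagger\|_F^2 = \eta$, so $X^\dagger$ sits on the boundary of the feasible ball. Next, I would define $\widehat{X}(\lambda) \equiv \operatorname{WSVT}_\lambda(Y)$ for $\lambda>0$ (which reduces to $\operatorname{SVT}_\lambda(Y)$ in the nuclear-norm case) and consider the scalar function
\[
g(\lambda) \equiv \|Y - \widehat{X}(\lambda)\|_F^2 = \sum_{i=1}^m \min(\lambda w_i,\sigma_{Y,i})^2,
\]
obtained by reading off the singular-value shrinkage. This expression is continuous and non-decreasing in $\lambda$, with $\lim_{\lambda\to 0^+} g(\lambda)=0$ and $g(\lambda)=\|Y\|_F^2$ for all $\lambda\geq \sigma_{Y,1}/w_1$. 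Since the hypothesis \eqref{normYbound} places $\eta$ strictly between $0$ and $\|Y\|_F^2$, the intermediate value theorem produces some $\lambda^*>0$ with $g(\lambda^*)=\eta$.

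Once $\lambda^*$ is in hand, I would apply Lemma \ref{lemma1} to $\widehat{X}(\lambda^*)$: it is a global minimizer of \eqref{unconstrain} with parameter $\lambda^*$, hence also a global minimizer of the constrained problem \eqref{constrained} with the very same $\eta$. Therefore $X^\dagger$ and $\widehat{X}(\lambda^*)$ are both global minimizers of \eqref{constrained}, and in particular $f(X^\dagger)=f(\widehat{X}(\lambda^*))$. Combining this with $\|Y-X^\dagger\|_F^2 = \|Y-\widehat{X}(\lambda^*)\|_F^2 = \eta$ shows that $X^\dagger$ attains the same value of the unconstrained objective as $\widehat{X}(\lambda^*)$; substituting into the minimality inequality
\[
f(\widehat{X}(\lambda^*)) + \tfrac{1}{2\lambda^*}\|Y-\widehat{X}(\lambda^*)\|_F^2 \leq f(X)+\tfrac{1}{2\lambda^*}\|Y-X\|_F^2 \qquad \forall X
\]
then immediately upgrades $X^\dagger$ to a global minimizer of \eqref{unconstrain} with parameter $\lambda^*$.

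The main obstacle I anticipate is precisely the non-convexity of the weighted nuclear norm when the weights are non-decreasing: a direct KKT/Lagrangian argument is unavailable, so the existence of $\lambda^*$ has to be extracted from the explicit shrinkage formula rather than from subdifferential calculus. Showing the surjectivity of $g$ onto $(0,\|Y\|_F^2)$ and arguing that the optimal $\lambda^*$ is strictly positive (ruled out only by $\eta<\|Y\|_F^2$, which is the standing assumption) are the two places where \eqref{normYbound} is genuinely used; the rest of the argument then proceeds by matching objective values at the boundary.
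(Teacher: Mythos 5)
Your proof is correct, but it follows a genuinely different route from the paper's. The paper argues structurally: it first uses Von Neumann's trace inequality to show that $X^\dagger$ must share its left and right singular vectors with $Y$, reduces \eqref{constrained} to a \emph{linear} (hence convex) program over the vector of singular values, invokes Slater's condition to produce a Lagrange multiplier $\alpha>0$, and solves the resulting separable Lagrangian to recover the shrinkage formula $\sigma_{X^\dagger,i}=(\sigma_{Y,i}-w_i/\alpha)_+$, identifying $\lambda=1/\alpha$. You instead bypass the singular-vector alignment and the Lagrangian machinery entirely: you construct $\lambda^*$ by the intermediate value theorem applied to the residual $g(\lambda)=\sum_i \min(\lambda w_i,\sigma_{Y,i})^2$ of the explicit $\operatorname{WSVT}_\lambda(Y)$ minimizer (this is precisely the function $\psi(\lambda)$ whose monotonicity and solvability the paper only establishes later, in Theorems \ref{existence}, \ref{existWNNM} and \ref{RegparaWNNM}), then transfer optimality back to $X^\dagger$ via Lemma \ref{lemma1} and Lemma \ref{solboundary} by matching objective values on the boundary. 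Your argument is self-contained and arguably more elementary — it needs only the closed-form solution of \eqref{unconstrain} and continuity of $g$, and it sidesteps the convexity discussion altogether — whereas the paper's argument additionally \emph{characterizes} $X^\dagger$ as a WSVT of $Y$, structural information it reuses afterwards. One small point worth making explicit in your write-up: the value-matching step needs $w_1>0$ only to guarantee $g(\lambda)=\norm{Y}_F^2$ for large $\lambda$ (so that the IVT range covers $\eta$), which you do use correctly via the bound $\lambda\ge\sigma_{Y,1}/w_1$.
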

\begin{proof}
First, we show that
$X^\dagger$ and $Y$ have the same sets of left and right singular vectors.
For if otherwise, then by Von Neumann's trace inequality \cite{1975Mirsky}, we have
\[
\sum_{i=1}^m  (\sigma_{Y,i}-\sigma_{X^{\dagger},i})^2<
\norm{Y-X^\dagger}_F^2=\eta,
\]
where the last equality is by Lemma \ref{solboundary}.
Similar to the proof in Lemma \ref{solboundary}, we can then construct a matrix $\widehat{X}$ such that $f(\widehat{X})<f(X^\dagger)$ \tr{while
$\norm{Y-\widehat{X}}_F^2\leq\eta$.} This contradicts the assumption that $X^\dagger$ is a global minimizer of \eqref{constrained}.

Next, we show that the singular values  of $X^\dagger$ can be obtained by the Lagrangian multiplier method.
Let $Y=U\Sigma_YV^T$ be the SVD of $Y$. By the above paragraph, the solution of \eqref{constrained} is given by $X^\dagger=U\Sigma_{X^\dagger} V^T$. Here $\Sigma_{X^\dagger}$ can be obtained by solving the minimization problem
\begin{equation}\label{temp1}
\min_{\norm{\Sigma_{X}-\Sigma_{Y}}_F^2\leq \eta}f(\Sigma_{X}).
\end{equation}
\tr{We note that the constraint $\norm{\Sigma_{X}-\Sigma_{Y}}_F^2\leq \eta$ must be an active constraint. For if not, then $X = 0$ is a minimizer and we obtain $\norm{\Sigma_{Y}}_F^2=\norm{Y}_F^2\leq \eta$, a contradiction.}

Without loss of generality, we consider $f(\Sigma_{X})=\sum_iw_i\sigma_{X,i}$ (for nuclear norm, all $w_i=1$).
Then (\ref{temp1}) can be reformulated into
\begin{equation} \label{temp3}
    \min_{\sigma_{X}\in \cS}\sum_iw_i\sigma_{X,i},
\end{equation}
where
$$
\cS=\{\sigma_{X}=(\sigma_{X,1},\ldots,\sigma_{X,m})^T: \norm{\sigma_{X}-\sigma_{Y}}_2^2\leq \eta, \sigma_{X,i}\geq \sigma_{X,2}\geq \cdots \geq \sigma_{X,m} \geq 0 \}.
$$
Since $\sigma_Y\in \cS$ is a strictly feasible point, the minimization problem satisfies Slater's condition \cite[Proposition 3.3.9]{1997Nonlinear}.
Therefore, there must exist a Lagrangian multiplier $\alpha>  0$ such that the solution of (\ref{temp3}) \tr{(which is the same as the solution of \eqref{constrained})} can be obtained by minimizing the Lagrangian function
\[%begin{equation}\label{constrainedsingularv}
\min_{\sigma_{X}\geq 0}\sum_{i=1}^m w_i\sigma_{X,i}+\frac{\alpha}{2}\left(\norm{\sigma_{X}-\sigma_{Y}}_2^2-\eta\right).
%\min_{\Sigma_{X}}\cL(\Sigma_{X},\alpha)\equiv f(\Sigma_{X})+\frac{\alpha}{2}\left(\norm{\Sigma_{X}-\Sigma_{Y}}_F^2-\eta\right).
\]%end{equation}
The objective function is separable in each $i$, hence we obtain
\begin{equation} \label{sigma1}
\sigma_{X^\dagger,i}=(\sigma_{Y,i}-w_i/\alpha)_+ \quad \forall i=1, \ldots, m.
\end{equation}
It is easy to check that we have $\sigma_{X^\dagger,i}\geq \sigma_{X^\dagger,i+1}$ due to the ordering of the singular values $\sigma_{Y,i}$ and the weights $\omega_i$.

\tr{Finally, we show that the lemma holds.
It is obvious that $\sigma_{X^\dagger,i}$  given by (\ref{sigma1}) are precisely those  singular values in the SVT operator (see \eqref{svt}) or a WSVT operator (see \eqref{WSVT}) with the thresholding parameter $\lambda = 1/\alpha$. Therefore, by Theorem \ref{th:SVT} or Theorem \ref{WNNM}, we have
%\eqref{constrainedsingularv}, we change the variable $X^\dagger=U\Sigma_{X}^{\dagger}V^T$ and have
\[
f(X^\dagger)+\frac{\alpha}{2 }\norm{X^\dagger-Y}_F^2 \leq f(X)+\frac{\alpha}{2}\norm{X-Y}_F^2,
\]
for all $X\in \R^{m\times n}$.
}
\end{proof}

Based on the above lemma, we can find a suitable regularization parameter $\lambda$ when the bound $\eta$ is given such that a solution of \eqref{unconstrain} is one of the solution of \eqref{constrained}.

\subsection{Choice of Upper bound} \label{DP}
It is very important to choose a suitable  {regularization parameter} $\lambda$ in the regularized minimization problem \eqref{unconstrain} because
the quality of the recovered matrix highly depends on this parameter.
If $\lambda$ is too large, the given data cannot be fitted correctly,
and if it is too small, the shrinkage is insufficient.
The Morozov discrepancy principle \cite{1996Regularization, Morozov1984} is one method to choose $\lambda$.
This principle selects $\lambda$ such that the residual norm is bounded, i.e., a good regularized solution \tr{$X(\lambda)$} should lie in a set $\{ X: \|Y-X(\lambda)\|_F^2\leq \eta\}$, where $\eta$ is an upper bound of the discrepancy depending on the noise level. Now we consider how to choose the bound $\eta$.

According to the discussion in  Section \ref{Pre},  the minimizer of the regularized problem \eqref{unconstrain} has a {closed form solution} and is a function of the parameter $\lambda$.
We will use $\widehat{X}(\lambda)$ to denote the minimizer.
Notice that the residual $Y-\widehat{X}(\lambda)$ can be rewritten as
$
Y-\widehat{X}(\lambda)={X_{\dagger}}+W -\widehat{X}(\lambda),
$
where {$X_\dagger$ is the true solution and each entry of the error matrix} $W$ is the Gaussian noise with zero mean and variance $\tau^2. $
If $\widehat{X}(\lambda)$ is a good estimation of {$X_\dagger$}, then the residual should be dominated by the Gaussian noise $W$.
We introduce the following theorems which can be found in \cite{ 2010Finding, 2020Mead, 2013chi}.

\begin{theorem}\label{exp}
	Assume that {all the entries} of $W$ are of normal independent distribution with mean 0 and variance $\tau^2$.
	Then $\|Y-X_\dagger\|_F^2$ is $\chi^2$-distributed with variance $\tau^2$ and $mn$ degree of freedom, i.e.,
	$
	\frac{1}{\tau^2}\norm{W}_F^2\sim \chi^2_{mn}.
	$
	Moreover, we have
	$$\mE\norm{W}_F^2 = mn\tau^2.$$
\end{theorem}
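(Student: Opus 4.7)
The plan is to reduce the statement to the definition of the chi-squared distribution and to exploit the component-wise independence of the Gaussian entries of $W$. First, I would use the noise model to write $Y - X_\dagger = W$, so that
\[
\|Y - X_\dagger\|_F^2 = \|W\|_F^2 = \sum_{i=1}^m \sum_{j=1}^n W_{ij}^2,
\]
i.e., the squared Frobenius norm is just a sum of $mn$ squared Gaussians, which by hypothesis are mutually independent.

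Next I would standardize. Since each $W_{ij} \sim N(0,\tau^2)$, the rescaled entry $W_{ij}/\tau$ is $N(0,1)$, so $(W_{ij}/\tau)^2 \sim \chi^2_1$. By the defining property of the chi-squared distribution (the sum of $k$ independent squared standard normals is $\chi^2_k$), the sum $\sum_{i,j}(W_{ij}/\tau)^2 = \tau^{-2}\|W\|_F^2$ is $\chi^2_{mn}$-distributed, which is exactly the first assertion.

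For the moment identity, I would either invoke the known fact that $\mE[\chi^2_k] = k$ applied to the rescaled variable, or argue directly from linearity of expectation: since $\mE W_{ij} = 0$ and $\mathrm{Var}(W_{ij}) = \tau^2$, one has $\mE W_{ij}^2 = \tau^2$, hence
\[
\mE\|W\|_F^2 = \sum_{i=1}^m\sum_{j=1}^n \mE W_{ij}^2 = mn\tau^2.
\]

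There is essentially no obstacle: the result is definitional once the Frobenius norm is expanded component-wise. The only ingredient worth flagging is the independence of the entries of $W$, which is built into the hypothesis and is needed to conclude that the sum of the squared standardized entries is $\chi^2_{mn}$ (rather than merely having the correct marginal moments). Accordingly, I would keep the argument to a few lines and cite the classical distributional facts rather than reprove them.
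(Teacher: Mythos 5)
Your proof is correct; the paper itself states this theorem without proof, merely citing the literature, and your argument is exactly the standard one those references contain: expand $\|W\|_F^2$ entrywise, standardize each $W_{ij}/\tau$ to a standard normal, invoke the definition of $\chi^2_{mn}$ as a sum of $mn$ independent squared standard normals, and get the expectation from $\mE W_{ij}^2=\tau^2$ by linearity. You also correctly flag that independence of the entries is the essential hypothesis, so nothing is missing.
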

\begin{theorem}\label{Upp}
	Suppose that $g$ is a Lipschitz function on matrices:
	$$
	|g({X})-g({Y})| \leq L\|{X}-{Y}\|_{{F}}, \quad \text { for all } {X}, {Y}\in \R^{m\times n}.
	$$
	Given a Gaussian matrix ${W}\sim \mathcal{N}(0,\tau^2 I)$, then
	$$
	\mathbb{P}\{g({W}) \geq \mathbb{E} g({W})+L \tau t\} \leq \mathrm{e}^{-t^{2} / 2}.
	$$
\end{theorem}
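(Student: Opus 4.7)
The claim concerns a Gaussian matrix, but since the Frobenius norm on $m\times n$ matrices coincides with the Euclidean norm on $\R^{mn}$ under vectorization, the first step is to reduce to a statement about a standard Gaussian vector. Set $w=\vec(W)\in\R^{mn}$ and $z=w/\tau\sim\mathcal{N}(0,I_{mn})$, and define $h:\R^{mn}\to\R$ by $h(z)=g(\tau\cdot\mat(z))$. The Lipschitz hypothesis on $g$ yields
\[
|h(z_1)-h(z_2)|\leq L\tau\,\|z_1-z_2\|_2,
\]
and the desired tail bound becomes
\[
\mathbb{P}\{h(z)\geq \mE h(z)+L\tau\, t\}\leq \mathrm{e}^{-t^2/2},
\]
which is the standard Gaussian concentration inequality for Lipschitz functions on $\R^{mn}$.

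To establish that concentration inequality, I would use Herbst's argument combined with the Gaussian logarithmic Sobolev inequality of Gross. After a mollification of $h$ (so that $\nabla h$ exists and $|\nabla h|\leq L\tau$ almost everywhere, which is available since a Lipschitz function is differentiable almost everywhere by Rademacher's theorem), define the moment generating function $\psi(\beta)=\mE\exp\bigl(\beta(h(z)-\mE h(z))\bigr)$ for $\beta>0$. Applying the log-Sobolev inequality to $\exp(\beta h/2)$ produces the differential inequality
\[
\frac{d}{d\beta}\!\left(\frac{\log\psi(\beta)}{\beta}\right)\leq \frac{L^2\tau^2}{2}.
\]
Integrating from $0^+$ and using that $\log\psi(\beta)/\beta\to 0$ as $\beta\downarrow 0$ gives $\psi(\beta)\leq \exp(\beta^2 L^2\tau^2/2)$. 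A Chernoff bound with the optimal choice $\beta=t/(L\tau)$ then delivers the tail bound, and passing to the limit removes the mollification.

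The main obstacle is Gross's Gaussian log-Sobolev inequality itself, which is a classical but non-elementary fact; in the present paper it is simply invoked via the references cited alongside the theorem. Everything else---the vectorization, the approximation by smooth $h$, and Herbst's integration---is routine bookkeeping. An alternative route would be to quote the Gaussian isoperimetric inequality of Borell and Sudakov--Tsirelson directly, which yields the same dimension-free constants with no further calculation, but the log-Sobolev route keeps the argument self-contained up to a single well-known inequality.
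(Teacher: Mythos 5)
Your outline is correct, but there is nothing in the paper to compare it against: the paper does not prove Theorem \ref{Upp} at all --- it is stated as a known result and attributed to the cited references (Halko--Martinsson--Tropp and the $\chi^2$-test papers), and the text immediately moves on to apply it with $L=1$ for the Frobenius norm. Your reduction to a standard Gaussian vector via vectorization is the right first step (the Frobenius norm is exactly the Euclidean norm of $\vec(W)$, so the Lipschitz constant of $h(z)=g(\tau\,\mat(z))$ is indeed $L\tau$), and the Herbst argument is sound: applying the Gaussian logarithmic Sobolev inequality to $\exp(\beta h/2)$ gives $\beta\psi'(\beta)-\psi(\beta)\log\psi(\beta)\leq \tfrac{1}{2}\beta^{2}L^{2}\tau^{2}\psi(\beta)$, which is your differential inequality for $\log\psi(\beta)/\beta$; integrating with the boundary value $0$ at $\beta\downarrow 0$ (valid for the centered variable) and optimizing the Chernoff bound at $\beta=t/(L\tau)$ yields exactly $\mathrm{e}^{-t^{2}/2}$, so the constants match the statement. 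The two points that need care --- mollifying $h$ so that the gradient form of the log-Sobolev inequality applies, and verifying $\log\psi(\beta)/\beta\to 0$ --- are both addressed in your sketch. The alternative you mention (Borell and Sudakov--Tsirelson isoperimetry) gives the same dimension-free bound, in fact with the median in place of the mean, from which the mean version follows; either route is a legitimate proof of a result the paper chose simply to quote.
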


Since $\|\cdot\|_F$ is Lipschitz continuous with $L=1$, we obtain
$$
\mathbb{P}\{\|{W}\|_F \leq \mathbb{E} \|{W}\|_F+t\tau \} \geq 1-\mathrm{e}^{-t^{2} / 2}.
$$
We can change $t$ to get an upper bound of the Gaussian noise $\|{W}\|_F$ with high probability (e.g. when $t\ge4$, the noise is bounded with probability greater than 0.9996).

According to Theorems \ref{exp} and \ref{Upp},
it is natural to set the upper bound of $\|W\|_F^2$ to $\eta=mn\tau^2$.
However, $\widehat{X}(\lambda)$ is obtained by applying the SVT operator to $Y$,
hence $\widehat{X}(\lambda)$ is dependent on the noise $W$.
It is reasonable to modify the upper bound to
\begin{equation}\label{upperbound}
	\eta=c mn\tau^2,
\end{equation}
where $c\simeq 1$ can be adjusted appropriately to suit the applications \cite{2010Modular, 1992Methods}. We see that the parameter $\eta$ depends on the estimation of the noise level $\tau$.
If the noise level is not given, $\tau$ can be estimated by using the median rule  \cite{ 1994Ideal, mallat1999wavelet}, i.e.,
\begin{equation}\label{median}
	\tau=\text{median}(|\widehat{Y}_{HH}|)/0.6745,
\end{equation}
where {$\widehat{Y}_{HH}$}  is the high-high coefficients of $Y$ at the finest wavelet transform level.

Once the upper bound $\eta$ is determined, we are in the position to find $\lambda$ in terms of $\eta$.
Because the regularization parameter $\lambda$ can be regarded as the Lagrange multiplier associated with the active constraint,
the complementarity condition states
%\tr{ i.e. $\alpha>0$, so according to \ref{boundary} we have }
\begin{equation}\label{MDP}
	\norm{Y-\widehat{X}(\lambda)}_F^2= \eta.
\end{equation}
It means that the optimal regularization parameter $\lambda$ is the root of the nonlinear equation \eqref{MDP}.

\section{Solutions for the Constrained and Unconstrained problems}\label{Reg}
In this section, we first consider finding an analytical expression of  $\lambda$ for the unconstrained problem \eqref{unconstrain} when the bound $\eta$ for the constrained problem \eqref{constrained} is given (e.g. as in \eqref{upperbound}). Since the regularization parameter $\lambda$ is the same as the thresholding parameter used in the singular value soft-thresholding operator, we can hence solve the unconstrained problem \eqref{unconstrain} easily. By the equivalence of the two minimization problems, the constrained problem with the bound $\eta$ is also then solved.

\subsection{Nuclear Norm}\label{nns}
Let $f(X)=\norm{X}_*$, we know that the minimizer of the nuclear norm regularized minimization problem \eqref{NNM} is given by the SVT operator, see Theorem \ref{th:SVT}.
We first show that $\norm{Y-\widehat{X}(\lambda)}_F^2$ is a positive and strictly increasing function of $\lambda$.
\begin{theorem}\label{existence}
	Let $\sigma_{Y,i},i=1,...,m$ be the singular values of $Y\in \R^{m\times n}$ with
	$\sigma_{Y,1}\geq \sigma_{Y,2} {\geq}\ldots \geq \sigma_{Y,m}\geq 0$ and $\widehat{X}(\lambda)$ be the solution of \eqref{NNM} obtained by the SVT operator acting on $Y$.
	Then $\norm{Y-\widehat{X}(\lambda)}_F^2$ is a positive and {strictly increasing} function of $\lambda$ in the interval $[0, \sigma_{Y,1}]$.
	%straightly increasing.
\end{theorem}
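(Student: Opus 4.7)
The plan is to compute $Y-\widehat{X}(\lambda)$ explicitly from the SVT formula in Theorem \ref{th:SVT}, reduce the Frobenius norm to a simple sum of squares, and then read off monotonicity directly.

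First I would combine $Y=\sum_{i=1}^m \sigma_{Y,i}\vu_i\vv_i^T$ with $\widehat{X}(\lambda)=\sum_{i=1}^m(\sigma_{Y,i}-\lambda)_+\vu_i\vv_i^T$ to obtain
\[
Y-\widehat{X}(\lambda)=\sum_{i=1}^m\bigl(\sigma_{Y,i}-(\sigma_{Y,i}-\lambda)_+\bigr)\vu_i\vv_i^T=\sum_{i=1}^m\min(\sigma_{Y,i},\lambda)\,\vu_i\vv_i^T,
\]
using the identity $x-(x-\lambda)_+=\min(x,\lambda)$ for $\lambda\geq 0$ and $x\geq 0$. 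The key observation I would then exploit is that the rank-one matrices $\{\vu_i\vv_i^T\}_{i=1}^m$ are orthonormal with respect to the Frobenius inner product, since $\langle \vu_i\vv_i^T,\vu_j\vv_j^T\rangle_F=(\vu_i^T\vu_j)(\vv_i^T\vv_j)=\delta_{ij}$ by the orthonormality of the singular vectors. Consequently the cross terms vanish and
\[
\norm{Y-\widehat{X}(\lambda)}_F^2=\sum_{i=1}^m\min(\sigma_{Y,i},\lambda)^2.
\]

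From this closed-form expression both conclusions would follow quickly. Positivity on $(0,\sigma_{Y,1}]$ holds because for any $\lambda>0$ the term with $i=1$ contributes $\min(\sigma_{Y,1},\lambda)^2>0$ (and at $\lambda=0$ the value is zero, so the statement should be read as non-negativity at the endpoint and strict positivity inside). For strict monotonicity on $[0,\sigma_{Y,1}]$ I would note that each function $\lambda\mapsto\min(\sigma_{Y,i},\lambda)^2$ is non-decreasing, and for $\lambda\in[0,\sigma_{Y,1}]$ the $i=1$ summand equals $\lambda^2$, which is strictly increasing; the sum of a strictly increasing function with non-decreasing ones is strictly increasing.

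I do not anticipate a real obstacle: the argument reduces to a short algebraic identity plus the Frobenius-orthogonality of outer products of singular vectors. The only mild point of care is being explicit that the $\vu_i\vv_i^T$ are $F$-orthonormal so that $\norm{\cdot}_F^2$ decouples into a sum of squared coefficients; without this, one might be tempted to reason about singular values of the residual, which is unnecessary and less transparent.
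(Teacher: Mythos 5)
Your proposal is correct and follows essentially the same route as the paper: both reduce the residual to $\sum_{i=1}^m\min(\sigma_{Y,i},\lambda)\vu_i\vv_i^T$ and use Frobenius-orthonormality of the $\vu_i\vv_i^T$ to get $\norm{Y-\widehat{X}(\lambda)}_F^2=\sum_{i=1}^m\min(\sigma_{Y,i}^2,\lambda^2)$. Your final monotonicity step (one strictly increasing summand plus non-decreasing ones) is a slightly more direct variant of the paper's piecewise-quadratic-plus-continuity argument, and your remark that positivity fails at $\lambda=0$ matches the paper's own later observation that $\phi(0)=0$.
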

\begin{proof}
	According to Theorem \ref{th:SVT},
	%there exists a closed form solution of the nuclear norm regularized minimization problem.
	% in \ref{svt} into the the function,
	we have
	\[
	Y-\widehat{X}(\lambda)=\sum_{i=1}^{m}\sigma_{Y,i} \vu_i\vv_i^{\mathrm{T}} - \sum_{i=1}^{m}\max(\sigma_{Y,i}-\lambda,0) \vu_i\vv_i^{\mathrm{T}}=\sum_{i=1}^{m}\min(\sigma_{Y,i},\lambda) \vu_i\vv_i^{\mathrm{T}}.
	\]
	Hence we have
	\begin{equation} \label{psilambda0}
		\phi(\lambda)\equiv\norm{Y-\widehat{X}(\lambda)}_F^2=
		\sum_{i=1}^{m}\min(\sigma_{Y,i}^2,\lambda^2).
	\end{equation}
	Without loss of generality, we assume $\sigma_{Y,1}>0$ and define $\sigma_{Y,m+1}=0$.
	For any $\lambda$ satisfying $0=\sigma_{Y,m+1}< \lambda\leq \sigma_{Y,1}$, there must exist an index $k$ such that
	$\lambda \in (\sigma_{Y,k+1}, \sigma_{Y,k}]$.
	Then we have
	\begin{equation} \label{psilambda}
		\phi(\lambda)= k\lambda^2 + \sum_{i=k+1}^m \sigma_{Y,i}^2,
	\end{equation}
	where the first term is a quadratic function with respect to $\lambda$ and the second {term} is independent of $\lambda$. Thus $\phi(\lambda)$ is a strictly  increasing function in  $(\sigma_{Y,k+1}, \sigma_{Y,k}]$.
	One can easily check that
	\[
	\lim_{\lambda\rightarrow \sigma_{Y,k+1}^-}\phi(\lambda)={\phi}(\sigma_{Y,k+1})=k\sigma_{Y,k+1}^2+ \sum_{i=k+1}^m \sigma_{Y,i}^2,
	\]
	which implies that $\phi(\lambda)$ is a continuous function.
	Thus we can deduce that $\phi(\lambda)$ is a positive and {strictly  increasing} function in the interval $[0, \sigma_{Y,1}]$.
\end{proof}

Based on Theorem \ref{existence}, we know $\phi(\lambda)=0$ for $\lambda=0$ and $\phi(\lambda)=\sum_{i=1}^{m}\sigma_{Y,i}^2=\norm{Y}_F^2$ for $\lambda \geq \sigma_{Y,1}$.
Thus we obtain $0\leq \phi(\lambda)\leq \norm{Y}_F^2$ and we have the following theorems.
\begin{theorem}\label{RegPara}
Let $\widehat{X}(\lambda)$ be the solution of {\eqref{NNM}}  obtained by the SVT operator acting on $Y$ and let $\eta > 0$ be such that $\norm{Y}_F^2> \eta$.
Then the nonlinear equation $\norm{Y-\widehat{X}(\lambda)}_F^2=\eta$ has a {\it unique} solution $\lambda>0$.
%Moreover, there exists a parameter $\lambda$ in \eqref{unconstrain} such that $\sigma_{Y,k+1}<\lambda\leq\sigma_{Y,k}$ and $k\lambda^2+ \sum\limits_{i=k+1}^m \sigma_{Y,i}^2=\eta$.
\end{theorem}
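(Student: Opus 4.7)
The plan is to deduce Theorem \ref{RegPara} as a direct consequence of Theorem \ref{existence}, which has already established that $\phi(\lambda) \equiv \norm{Y - \widehat{X}(\lambda)}_F^2$ is continuous and strictly increasing on $[0,\sigma_{Y,1}]$. The strategy is essentially an application of the intermediate value theorem together with strict monotonicity, plus a short argument that no additional solutions can hide in the region $\lambda > \sigma_{Y,1}$.

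First I would pin down the two endpoint values of $\phi$. At $\lambda = 0$, formula \eqref{psilambda0} gives $\phi(0) = 0$. At $\lambda = \sigma_{Y,1}$, since $\min(\sigma_{Y,i}^2, \sigma_{Y,1}^2) = \sigma_{Y,i}^2$ for every $i$, we obtain $\phi(\sigma_{Y,1}) = \sum_{i=1}^m \sigma_{Y,i}^2 = \norm{Y}_F^2$. By hypothesis $0 < \eta < \norm{Y}_F^2$, so $\eta$ lies strictly between these two endpoint values.

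Next, because $\phi$ is continuous on $[0,\sigma_{Y,1}]$ (established in Theorem \ref{existence}), the intermediate value theorem yields at least one $\lambda^\star \in (0,\sigma_{Y,1})$ with $\phi(\lambda^\star) = \eta$. Strict monotonicity on $[0,\sigma_{Y,1}]$, also established in Theorem \ref{existence}, immediately rules out any second solution in this interval.

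The only remaining obstacle — the main thing to rule out — is whether another root could appear for $\lambda > \sigma_{Y,1}$, since Theorem \ref{existence} only covers the interval $[0,\sigma_{Y,1}]$. This is handled by observing that when $\lambda \geq \sigma_{Y,1}$, the SVT operator zeroes out every singular value, so $\widehat{X}(\lambda) = 0$ and $\phi(\lambda) = \norm{Y}_F^2$. By the assumption $\norm{Y}_F^2 > \eta$, we have $\phi(\lambda) > \eta$ for every $\lambda \geq \sigma_{Y,1}$, so no further root exists. Combining these pieces gives existence and uniqueness of $\lambda > 0$ solving $\phi(\lambda) = \eta$, completing the proof.
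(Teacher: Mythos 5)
Your proof is correct, but it takes a genuinely different route from the paper's. You argue non-constructively: combining the continuity and strict monotonicity of $\phi$ from Theorem \ref{existence} with the endpoint values $\phi(0)=0$ and $\phi(\sigma_{Y,1})=\norm{Y}_F^2$, you invoke the intermediate value theorem to get existence, use strict monotonicity for uniqueness on $[0,\sigma_{Y,1}]$, and correctly dispose of the region $\lambda\geq\sigma_{Y,1}$ by noting $\phi$ is constantly equal to $\norm{Y}_F^2>\eta$ there (a point the paper glosses over, so your treatment is actually slightly more complete on that front). The paper instead proceeds constructively: it introduces the non-increasing sequence $b_j=j\sigma_{Y,j}^2+\sum_{i=j+1}^m\sigma_{Y,i}^2$ (which is just $\phi$ evaluated at the breakpoints $\sigma_{Y,j}$), locates the index $k$ with $b_{k+1}<\eta\leq b_k$, and solves the quadratic \eqref{psilambda} on that piece to produce the explicit formula $\lambda=\sqrt{(\eta-\sum_{i=k+1}^m\sigma_{Y,i}^2)/k}$ in \eqref{lambda2}. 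Both arguments prove the stated existence and uniqueness, and yours is arguably the cleaner way to prove exactly what the theorem asserts. What your approach gives up is the closed-form expression for $\lambda$, which is the paper's central practical contribution: \eqref{lambda2} is what Algorithm \ref{LRMR-param} computes, and it is what makes the method a one-shot substitution rather than a root-finding procedure. If you were writing this for the paper, you would want to append the constructive identification of the root (or at least observe that, once $k$ is known, inverting the quadratic piece of $\phi$ is immediate) so that the theorem delivers the formula and not merely its existence.
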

\begin{proof}
We define a new sequence
\begin{equation}\label{bj}
b_j=j\sigma_{Y,j}^2+\sum_{i=j+1}^{m} \sigma_{Y,i}^2.
\end{equation}
It is obvious that the sequence $b_j$ is non-increasing, {that is} $b_1\geq b_2\geq \ldots\geq b_m$. Define $b_{m+1}=0$.
Since $b_1 > \eta > 0$, we choose the subscript $k$ such that $b_{k+1}<\eta\leq b_k$.
%\tr{[[[I think this blue statement is incorrect, or not needed. What is the $k$ in the statement?]]]}
Let
\begin{equation}\label{lambda2}
\displaystyle{\lambda=\sqrt{(\eta-\sum_{i=k+1}^m \sigma_{Y,i}^2)/{k}}.}
\end{equation}
Since $\eta > b_{k+1}$, we have $\lambda > 0$. By \eqref{psilambda}, we have $\phi(\lambda)=\eta$.
%that is, $k\lambda^2+ \sum\limits_{i=k+1}^m \sigma_{Y,i}^2=\eta.$
%Moreover, we have
%$ b_{k+1}<k\lambda^2+ \sum\limits_{i=k+1}^m \sigma_{Y,i}^2 \leq b_k.$
%Furthermore, from the definition of $b_j$, we have $\sigma_{Y,k+1}<\lambda\leq\sigma_{Y,k}$.
%Hence the results holds.
\end{proof}

\subsection{Generalized Weighted Nuclear Norm} \label{gwnns}
Now we consider the monotonicity of the discrepancy $\norm{Y-\widehat{X}_{{\vw}}(\lambda)}_F^2$ with respective to the parameter $\lambda$.
For any $0<\lambda\leq \sigma_{Y,1}$, there exists an index $k$ such that $\lambda\in (\sigma_{Y,k+1},\sigma_{Y,k}{]}$. Suppose
$\widehat{X}_{{\vw}}(\lambda)$ is the solution of \eqref{WNNM} with non-descending weights $\{w_i\},i=1,...,m$, then using \eqref{WSVT} and after some manipulations similar to those in Theorem \ref{existence}, we have
\begin{equation} \label{psifun}	{\psi(\lambda)}\equiv\|Y-\widehat{X}_{{\vw}}(\lambda)\|_F^2=\left(\sum_{i=1}^k w_i^2\right)\lambda^2 +\sum_{i=k+1}^m \sigma_{Y,i}^2.
\end{equation}
{Comparing to the function $\phi(\lambda)$ in \eqref{psilambda}, the coefficient $k$ of $\lambda^2$ is replaced by the sum of the weighted square $\sum_{i=1}^k w_i^2$ for the function $\psi(\lambda)$.}
Obviously, we can obtain the following result similar to  Theorem \ref{existence}.
\begin{theorem}\label{existWNNM}
	Let $\widehat{X}_{{\vw}}(\lambda)$ be the solution of \eqref{WNNM} obtained by applying the WSVT operator \eqref{WSVT} acting on $Y$.
	Then the function {$\psi(\lambda)$} is a positive and {strictly  increasing} function of $\lambda$ in the interval $[0, \sigma_{Y,1}]$.
\end{theorem}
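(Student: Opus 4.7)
The plan is to mirror the proof of Theorem \ref{existence}, leveraging the explicit piecewise-quadratic form \eqref{psifun} that the excerpt has already extracted from the WSVT formula. Most of the analytic work is therefore already done; what remains is to verify strict monotonicity on each sub-interval, continuity at the breakpoints, and that these two facts together give strict monotonicity on the whole interval $[0,\sigma_{Y,1}]$.

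First I would fix $\lambda\in(\sigma_{Y,k+1},\sigma_{Y,k}]$ for the appropriate index $k\in\{1,\ldots,m\}$ (using the convention $\sigma_{Y,m+1}=0$ introduced in Theorem \ref{existence}) and recall
\[
\psi(\lambda)=\Bigl(\sum_{i=1}^{k} w_i^2\Bigr)\lambda^2+\sum_{i=k+1}^{m}\sigma_{Y,i}^2.
\]
By the standing convention of Section \ref{wnn} the weights are strictly positive, so the leading coefficient $\sum_{i=1}^k w_i^2$ is strictly positive. Hence on each such sub-interval $\psi$ is a non-constant quadratic in $\lambda$, strictly increasing for $\lambda>0$, and strictly positive for $\lambda>0$.

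Next I would handle continuity at each breakpoint $\lambda=\sigma_{Y,k}$, $k=1,\ldots,m$. Approaching $\sigma_{Y,k}$ from the left uses the formula with index $k$, while approaching from the right uses the formula with index $k-1$; a one-line telescoping check shows both one-sided limits equal $\sum_{i=1}^{k-1}w_i^2\,\sigma_{Y,k}^2+\sum_{i=k}^{m}\sigma_{Y,i}^2$, which also agrees with the value $\psi(\sigma_{Y,k})$ using either formula. Together with $\psi(0)=0$ (from the convention $\sigma_{Y,m+1}=0$ and the fact that the WSVT reduces to the identity at $\lambda=0$), this gives continuity on the whole interval $[0,\sigma_{Y,1}]$.

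Combining strict monotonicity on every sub-interval with continuity at the common endpoints yields strict monotonicity of $\psi$ on $[0,\sigma_{Y,1}]$, and non-negativity is immediate from \eqref{psifun}. The only mild obstacle is the bookkeeping when some of the $\sigma_{Y,i}$ coincide, so that a value of $\lambda$ belongs to several candidate sub-intervals; but in that case the extra terms in \eqref{psifun} have $\sigma_{Y,i}=\lambda$ and the two candidate formulas give the same value, so the ambiguity is vacuous and the argument goes through unchanged.
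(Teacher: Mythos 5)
Your overall strategy is the paper's own (the paper in fact offers little more than ``similar to Theorem~\ref{existence}''), but the step you describe as ``a one-line telescoping check'' is exactly where the argument breaks, and you have not actually carried it out. From \eqref{WSVT} one gets $Y-\widehat{X}_{\vw}(\lambda)=\sum_{i=1}^m\min(\sigma_{Y,i},\lambda w_i)\,\vu_i\vv_i^T$, hence
\[
\psi(\lambda)=\sum_{i=1}^m\min\bigl(\sigma_{Y,i}^2,\lambda^2w_i^2\bigr),
\]
and the $i$-th term switches branches at $\lambda=\sigma_{Y,i}/w_i$, \emph{not} at $\lambda=\sigma_{Y,i}$. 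The piecewise-quadratic representation \eqref{psifun} is therefore valid on $(\sigma_{Y,k+1}/w_{k+1},\,\sigma_{Y,k}/w_k]$ (these intervals do partition the positive axis, since $\sigma_{Y,i}$ is non-increasing and $w_i$ is non-decreasing). With the breakpoints you use, the index-$k$ and index-$(k-1)$ formulas differ at $\lambda=\sigma_{Y,k}$ by $(1-w_k^2)\sigma_{Y,k}^2$, which vanishes only when $w_k=1$ (the unweighted case). So the two one-sided limits you assert to be equal are not equal in general, and the piecewise function you are gluing together is not $\psi$. The repair is straightforward: run your argument on the partition by the weighted values $\sigma_{Y,k}/w_k$, where the telescoping identity does hold and each piece is again a quadratic with strictly positive leading coefficient $\sum_{i\leq k}w_i^2$; or, more simply, note that $\psi$ is a finite sum of continuous non-decreasing functions of $\lambda$, at least one of which (the $i=1$ term) is strictly increasing as long as $\lambda w_1<\sigma_{Y,1}$.

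A consequence worth flagging: with the correct breakpoints, $\psi$ is strictly increasing only on $[0,\sigma_{Y,1}/w_1]$ and is constant beyond that point, so strict monotonicity on all of $[0,\sigma_{Y,1}]$ --- which is what the theorem and your proof claim --- additionally requires $w_1\leq 1$. For the GWNN weights \eqref{pnorm} this need not hold. The imprecision originates in the paper's own display \eqref{psifun} (the later development, e.g.\ \eqref{wbj}, correctly uses the thresholds $\sigma_{Y,j}/w_j$), but since your proof leans entirely on a continuity check at the stated breakpoints, the gap is yours to close.
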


%Now we consider the solution of the minimization problems with the weighted nuclear norm discussed in Section \ref{wnn}.
%Next, we can continue to study the existence and uniqueness of the regularization parameter in \ref{WNNM}. The specific content is given by the following theorem.

{The result for Theorem \ref{RegPara} can be easily extend to the case for the weighted nuclear norm minimization problem.
{Notice that the WSVT operator in \eqref{WSVT} can be reformulated into
$$
\operatorname{WSVT}_\lambda(Y)=\sum_{i=1}^{m}w_i
\left(\frac{\sigma_{Y,i}}{w_i}-\lambda \right)_+ \vu_i\vv_i^T.
$$
Hence the weighted singular values {${\sigma_{Y,i}}/{w_i}$} which are less than the threshold parameter $\lambda$ are set to zero.
Similar to \eqref{bj}, we define the sequence
\begin{equation}\label{wbj}
b_j=(\frac{\sigma_{Y,j}}{w_j})^2\sum_{i=1}^{j}w_i^2+\sum_{i=j+1}^{m} \sigma_{Y,j}^2, \quad j=1,...,m,
\end{equation}
{with $b_{m+1}=0$.} Then $\{b_j\}_{j=1}^{m+1}$ is a non-increasing sequence.
Since by  Theorem \ref{existWNNM}, the function $\psi(\lambda)$ defined by \eqref{psifun} {is} a positive and strictly increasing function of the parameter $\lambda$, we see that there exists an index $k$ such that $b_{k+1}<\eta\leq b_k$.
Hence we have the following theorem.
}
%The following theorem shows the existence and uniqueness of the thresholding parameter $\lambda$ for the weighted nuclear norm minimization problem.
\begin{theorem}\label{RegparaWNNM}
Let $\widehat{X}_{{\vw}}(\lambda)$ be the solution of \eqref{WNNM} obtained by applying the WSVT operator \eqref{WSVT} on $Y$.
Then the nonlinear equation $\psi(\lambda)\equiv \norm{Y-\widehat{X}_{{\vw}}(\lambda)}_F^2=\eta$ has a {\it unique} solution
\begin{equation}\label{wlambda}
\lambda=\sqrt{(\eta-\sum_{i=k+1}^m \sigma_{Y,i}^2)/\sum_{i=1}^k w_i^2}.
\end{equation}
for any $\eta$ satisfying $0< \eta <\norm{Y}_F^2$.
%Moreover,
%%Then for the upper bound $\eta \geq 0$, the nonlinear equation
%%Given the upper bound $\eta \geq 0$,
%there exists a parameter $\lambda$  such that $\frac{\sigma_{Y,k+1}}{w_{k+1}}<\lambda\leq \frac{\sigma_{Y,k}}{w_{k}}$ and $\lambda^2\sum\limits_{i=1}^k w_i^2 +\sum\limits_{i=k+1}^m \sigma_{Y,i}^2=\eta.$
\end{theorem}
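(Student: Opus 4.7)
The plan is to mirror the proof of Theorem \ref{RegPara} but in the weighted setting, using the rewriting of WSVT that treats the scaled singular values $\sigma_{Y,i}/w_i$ as the quantities to be thresholded at level $\lambda$. Since $\{\sigma_{Y,i}\}$ is non-increasing and $\{w_i\}$ is non-decreasing (with all $w_i>0$), the sequence $\{\sigma_{Y,i}/w_i\}$ is non-increasing, so the intervals $(\sigma_{Y,j+1}/w_{j+1},\,\sigma_{Y,j}/w_j]$ partition $(0,\sigma_{Y,1}/w_1]$.

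Uniqueness of the solution $\lambda$ is immediate from Theorem \ref{existWNNM}, which asserts that $\psi(\lambda)$ is positive and strictly increasing on $[0,\sigma_{Y,1}]$ (so, a fortiori, on $[0,\sigma_{Y,1}/w_1]$), guaranteeing that the equation $\psi(\lambda)=\eta$ has at most one root. For existence and the explicit formula, I would first check that the sequence $\{b_j\}_{j=1}^{m+1}$ in \eqref{wbj} (with $b_{m+1}=0$) is non-increasing; this is a short algebraic computation using $\sigma_{Y,j}/w_j\ge\sigma_{Y,j+1}/w_{j+1}$ together with the fact that $b_j-b_{j+1}$ telescopes into a nonnegative multiple of $\big((\sigma_{Y,j}/w_j)^2-(\sigma_{Y,j+1}/w_{j+1})^2\big)$. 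I would then observe $b_1=\sigma_{Y,1}^2+\sum_{i=2}^{m}\sigma_{Y,i}^2=\|Y\|_F^2$, so the hypothesis $0<\eta<\|Y\|_F^2$ together with $b_{m+1}=0$ guarantees the existence of a (unique) index $k\in\{1,\dots,m\}$ with $b_{k+1}<\eta\le b_k$.

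With this $k$ in hand, I would appeal to the piecewise-quadratic expression \eqref{psifun}: on the interval $\lambda\in(\sigma_{Y,k+1}/w_{k+1},\,\sigma_{Y,k}/w_k]$, the function reduces to $\psi(\lambda)=\lambda^2\sum_{i=1}^{k}w_i^2+\sum_{i=k+1}^{m}\sigma_{Y,i}^2$. Setting this equal to $\eta$ and solving for $\lambda$ directly yields the closed-form expression \eqref{wlambda}. To finish, I would verify that this candidate $\lambda$ really lies in the asserted interval, which follows because $\psi$ evaluated at the endpoints of $(\sigma_{Y,k+1}/w_{k+1},\,\sigma_{Y,k}/w_k]$ produces exactly $b_{k+1}$ and $b_k$ respectively, so the intermediate value $\eta\in(b_{k+1},b_k]$ must correspond to a $\lambda$ in that interval by the strict monotonicity of $\psi$ on that sub-interval.

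The main obstacle is bookkeeping around the correct interval: we must be careful that the partition used in \eqref{psifun} is the one induced by the scaled singular values $\sigma_{Y,i}/w_i$ rather than the bare $\sigma_{Y,i}$, and that the $b_j$'s in \eqref{wbj} really are the values of $\psi$ at the breakpoints $\sigma_{Y,j}/w_j$. Once that identification is made and the monotonicity of $\{b_j\}$ is checked, the rest of the argument is an exact analogue of the nuclear-norm case handled in Theorem \ref{RegPara}, with the scalar $k$ there replaced by $\sum_{i=1}^{k}w_i^2$.
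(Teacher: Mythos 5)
Your proposal is correct and follows essentially the same route as the paper: rewrite the WSVT operator in terms of the scaled singular values $\sigma_{Y,i}/w_i$, verify that the sequence $b_j$ in \eqref{wbj} is non-increasing with $b_1=\|Y\|_F^2$ and $b_{m+1}=0$, locate the index $k$ with $b_{k+1}<\eta\le b_k$, and solve the quadratic piece of $\psi$ on the corresponding interval, with uniqueness from the strict monotonicity established in Theorem \ref{existWNNM}. In fact you are more careful than the paper's exposition on one point it glosses over, namely that the breakpoints partitioning the domain of the piecewise-quadratic $\psi$ are the scaled values $\sigma_{Y,j}/w_j$ rather than the bare singular values $\sigma_{Y,j}$.
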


%According to the above discussion, we summarize the algorithmfor solving the nuclear norm minimization problem \eqref{constrained} in Algorithm \ref{LRMR-param}. The solution is also the solution of the unconstrained regularized problem \eqref{unconstrain} for the corresponding $\lambda$.

Based on the above Theorems, {there exists a parameter $\lambda$ such that the minimizer of \eqref{constrained} is a SVT operator or WSVT operator with respect to $\lambda$.}
We can find a suitable regularization parameter $\lambda$ when the bound $\eta$ is given  such that a solution of \eqref{constrained} is one of the solution of \eqref{unconstrain}.
Since the regularization parameter $\lambda$ is the same as the thresholding parameter used in the singular value soft-thresholding operator, we can hence solve the unconstrained problem \eqref{unconstrain} easily.
By the equivalence of the two minimization problems, the constrained problem with the bound $\eta$ is also then solved.
We summarize the algorithm
for solving the nuclear norm minimization problem \eqref{constrained} in Algorithm \ref{LRMR-param}.

\begin{algorithm}
	\caption{{Low-rank} matrix recovery for constrained minimization problem}\label{LRMR-param}
	%Estimation of the regularization parameter $\lambda$ in low-rank matrix recovery(LRMR-RegParam)}
	
	\begin{algorithmic}[1]\label{LRsvd}
		\REQUIRE{Observation matrix $Y$ and   $\eta$}
		\ENSURE{Estimated matrix $X$}
		\IF{$\norm{Y}_F^2\leq \eta $}
		\STATE{$X=\mathbf 0$}
		\ELSE
		\STATE{$Y=U\Sigma_YV^{\mathrm{T}}, \Sigma_Y=\diag{(\sigma_{Y,1},...,\sigma_{Y,m})}$}
		\STATE{Compute $b_j,j=1,...,m$ by \eqref{bj} or \eqref{wbj}}
		\STATE{Find $k$ such that $b_k>\eta\geq b_{k+1}$}
		\STATE{Compute $\lambda$ by \eqref{lambda2} or \eqref{wlambda}}
		%\STATE{\vx=$(\{\sigma_{Y,i}-\lambda\}_+ , i=1,...,m)$}
		\ENDIF
		\RETURN{$ X=\mathrm{SVT}_{\lambda}(Y)$ or $ X=\mathrm{WSVT}_{\lambda}(Y)$}
		
	\end{algorithmic}
	
\end{algorithm}

%\subsection{Equivalence of constrained model and regularization model}\label{equivalent}

%Based on the above lemmas, {there exists a parameter $\lambda$ such that the minimizer of \eqref{constrained} is a SVT operator or WSVT operator with respect to $\lambda$.} We can find a suitable regularization parameter $\lambda$ when the bound $\eta$ is given  such that a solution of \eqref{constrained} is one of the solution of \eqref{unconstrain}. Since the regularization parameter $\lambda$ is the same as the thresholding parameter used in the singular value soft-thresholding operator, we can hence solve the unconstrained problem \eqref{unconstrain} easily. By the equivalence of the two minimization problems, the constrained problem with the bound $\eta$ is also then solved.
%We summarize the algorithmfor solving the nuclear norm minimization problem \eqref{constrained} in Algorithm \ref{LRMR-param}.
%The solution is also the solution of the unconstrained regularized problem \eqref{unconstrain} for the corresponding $\lambda$.

\subsection{Randomized Algorithm for Large-scale Problems}\label{RanSVD}
When the singular value thresholding operator is applied to recover the low-rank matrix, we need to calculate all the singular values of the observed  matrix $Y \in \R^{m\times n}$. The computation complexity is $O(m^2n)$ (here $m\leq n$). For large-scale matrices, singular values calculation is very time-consuming, which makes it infeasible in practical applications such as those in data sciences and image processing.
In order to avoid directly computing all the singular values, one can use the randomized SVD (RSVD) approach \cite{2010Finding,  Voronin2016A, oh2015fast}. The key  is to extract a small core matrix by finding an orthonormal matrix with the unitary invariant property.

{Let $\rank(X)=s$ and $s<\ell\leq m$. There exists an orthogonal matrix  $Q_\ell\in \R^{n\times \ell}$ with $Q_\ell^TQ_\ell=I_\ell$ and a matrix $A\in \R^{m\times \ell}$ such that $X$ has the factorization $X=AQ_\ell^T$.  Hence we have}
    % Let $\rank(X)=s$ and  $Q_\ell\in \R^{n\times \ell}$ be an orthogonal matrix with $Q_\ell^TQ_\ell=I_\ell$, here $s<\ell\leq m$.
    % Then there exists a matrix $A\in \R^{m\times \ell}$ such that $X=AQ_\ell^T$.
    % We have
    \begin{equation}\label{randupperbound}
    \norm{Y-X}_F^2=\norm{YQ_\ell-A}_F^2 + a.
    \end{equation}
    Here $a=\norm{Y}_F^2-\norm{YQ_\ell}_F^2$.
    Obviously,  $YQ_\ell\in \R^{m\times \ell}$ is a matrix with the size less than $Y$.
    Notice that $f(A)=f(X)$ for $\rank(X)\leq \ell$, where $f(X)$ denotes the nuclear norm, the truncated nuclear norm or the weighted nuclear norm of $X$.
    Therefore, instead of solving the regularized minimization problem \eqref{unconstrain}, by changing the variable, we consider the following reduced minimization problem
    \begin{equation} \label{unconstrainedA}
    \min_{A} f(A)+\frac{1}{2\lambda}\norm{YQ_\ell-A}_F^2.
    \end{equation}
    The size of the matrix $YQ_\ell$ is smaller than that of $Y$.

{Here we consider the solution of the reduced problem}
    and once we obtain the optimal solution for $A$, the matrix $X$ can be recovered by  $X=AQ_\ell^T$.
    %Hence this approach is very efficient.
    Therefore the computation complexity can be significantly improved when $\ell \ll n$.
    Similarly, for the constrained minimization problem, {the constraint can be reformulated as $\norm{YQ_\ell-A}_F^2\leq \eta-a$ according to \eqref{randupperbound}}.
    Thus we consider the following equivalent reduced problem
    \begin{equation}\label{constrainedA}
    \min_{\norm{YQ_\ell-A}_F^2\leq \eta-a}f(A).
    \end{equation}

    During the procedure to construct the reduced problem, we have assumed that  $Q_\ell$ is  known in advance and the rank of $X$ is less than the column number of $Q_\ell$.
    In some applications, however, the rank of $X$ is unknown.
    We can estimate the rank of $X$ according to the dominant components of the singular values of the noisy observation matrix $Y$.
    One way to obtain the dominant components of the singular values of  $Y$ is to construct an orthogonal matrix $Q_\ell$ such that the columns of $Q_\ell$ are the $\ell$ dominant right singular vectors of $Y$.
    The randomized method can be applied to efficiently compute $Q_\ell$.
    An ideal $Q_\ell$ should satisfy the condition that the number of columns of the orthogonal  matrix $Q_\ell$ is as small as possible such that the error $\norm{YQ_\ell Q_\ell^T- Y}$ is less than some desired tolerance.
    The details for the randomized method can be found in \cite{2010Finding}.

    Clearly, once $Q_\ell$ and $\eta$ are given, we can apply the same ideas developed in Subsection \ref{nns} and Subsection \ref{gwnns}  to obtain the regularization parameter $\lambda$ and hence solve the reduced problems \eqref{unconstrainedA} and \eqref{constrainedA}.

    {%\textbf{Computational Complexity:}
    Now we analyze the computational complexity of using the standard SVD in Algorithm \ref{LRsvd}.  The SVD costs $O(m^2n)$.   Computing the sequences $b_j$  and $\lambda$ require $O(m^2)$ and $O(m)$, respectively. Then forming  the optimal $X$ needs $O(m^2n)$. Therefore, the total computational complexity of  Algorithm \ref{LRsvd} is $O(2m^2n+m^2+m)$.  If we use the randomized SVD, the total computational complexity is reduced to $O(2\ell^2 m+ \ell^2+\ell )$. Since $\ell \ll m$, the computation speed can be significantly improved. We note that before calculating the projection problem \eqref{constrainedA}, the orthogonal matrix $Q_\ell$ must be calculated.  The cost of obtaining $Q_\ell$ in $q$ iterations of the power method is $O(2qmn\ell+(m+n)\ell^2q)$, see \cite{2010Finding}. }

    %The computation complexity to compute the singular values of this smaller matrix becomes  .
    \section{Applications}\label{numerical}
    The discrepancy principle is a classical method to choose the regularization parameter $\lambda$ for the regularized model \eqref{unconstrain} provided that an upper bound of the constraint $\|X - Y\|_F^2 \le \eta$ is given.
    In Section \ref{conreg}, we have developed the closed form formulas of $\lambda$ for the {NN  case and the GWNN case.} Once the parameter $\lambda$ is determined, the regularized solution of \eqref{unconstrain} can be reconstructed by the soft-thresholding formulas \eqref{svt} and \eqref{WSVT} directly. These solutions are also the solutions for the unconstrained problem \eqref{constrained} for the given $\eta$.
    The ideas can easily be extended to cover the {TNN} case (which is a shifted NN case, see  Subsection \ref{subsectionTNN}) and the randomized case, see Subsection \ref{RanSVD}.

    We illustrate the efficiency of solving the problems \eqref{constrained} and \eqref{unconstrain} using our approach in the following experiments.
    We denote the results obtained by the nuclear norm minimization problem \eqref{NNM} using our proposed regularization parameter $\lambda$ as ``NN-DP'' (see \eqref{lambda2}). When the truncated nuclear norm  or the generalized weighted nuclear norm is used, we
    denote our method as ``TNN-DP'' or ``GWNN-DP'' respectively.
    In the following tests, we set $r=1$ for TNN, and
    the weights  $w_i=(\sigma_{Y,i}+\epsilon)^{p-1}$ with $p=0.7$  {and $\epsilon=10^{-6}$} for GWNN. % \tr{[[[what do you choose for $\epsilon$?]]]}

We compare the proposed algorithms with the following two approaches:
\begin{itemize}
    \item \textbf{SURE \cite{CandesUnbiased}:} The optimal soft thresholding parameter can be estimated by minimizing
    \begin{equation}\label{SURE}
    \operatorname{SURE}\left(\operatorname{SVT}_{\lambda}(Y)\right)=-m n \tau^{2}+\sum_{i=1}^m \min \left(\lambda^{2}, \sigma_{Y,i}^{2}\right)+2 \tau^{2} \operatorname{div}\left(\operatorname{SVT}_{\lambda}(Y)\right),
    \end{equation}
    where `div' is the divergence of $\operatorname{SVT}_{\lambda}$ {with respect to $Y$}. The solution is then given by
    $$
    \widehat{X}_{\operatorname{SURE}} =\sum_{i=1}^{m}(\sigma_{Y,i}-\lambda^\dagger)_+ \vu_i\vv_i^T,
    $$
   {where $\lambda^\dagger$ minimizes the SURE function. As suggested in \cite{CandesUnbiased}, $\lambda^\dagger$ is obtained by trial-and-error on 101 logarithmically equally spaced points between $10^{-1}$ and $10^7$.}
    The SURE approach can be easily generalized to the TNN and GWNN too. We denote the resulting methods by ``NN-SURE'', ``TNN-SURE'' and
    ``GWNN-SURE''.
    %The extension of the SURE approach to the generalized weighted nuclear norm minimization problem is non-trivial, and we do not consider it in the numerical results.

    \item \textbf{HardT \cite{gavish2014the}:} The optimal  hard threshold is chosen as $\lambda=(4/\sqrt{3})\sqrt{n}\tau$ when the noise level $\tau$ is known or $0.2858\sigma_{Y,med}$ when $\tau$ is unknown. Here $\sigma_{Y,med}$ denotes the median value of the singular values of $Y$. The recovered matrix is given by
    $$
    \widehat{X}_{\operatorname{HardT}} =\sum_{i=1}^{m} \sigma_{Y,i}\mathbf{1}_{(\sigma_{Y,i}>\lambda)} \vu_i\vv_i^T.
    $$
\end{itemize}

\subsection{Simulations on Synthetic Data}
In the first set of experiments, we consider removing noise from synthetical matrices with size $m\times n$.
    The rank of the true low-rank matrix $X$ is determined by the  rank ratio $\rho$ and is generated by the product of two randomly sampling matrices  $M\in \R^{m \times s}$ and $N\in \R^{n \times s}$ from the standard uniform distribution, i.e., {$X_\dagger=MN^T$} where $s=\mathrm{round}(\rho \max(m,n))$. The observed matrix $Y$ is obtained by $Y={X_\dagger}+W$ where $W\in \R^{m\times n}$ is a matrix of zero-mean Gaussian white noise with  standard deviation $\tau$. We recover the matrix {$X_\dagger$} from its noisy observation matrix $Y$ by solving the constrained minimization problem \eqref{constrained}. The bound $\eta$ is determined by  \eqref{upperbound} with $c=1$. The Signal-to-Noise Ratio (SNR)\footnote{SNR is defined by
    $
    \text{SNR} = 10\log\left(\frac{\|X\|_F^2}{\|X-\widehat{X}\|_F^2}\right),
    $
    where $\widehat{X}$ is the estimated matrix.} is used to measure the quality of the recovered matrix.

    \subsubsection{ {Regularization Parameter} Selection}
    \begin{figure}[t]
    \centering

    {
    \begin{minipage}[t]{0.23\linewidth}
    \centering
    \includegraphics[height=2.8cm,width=3cm]{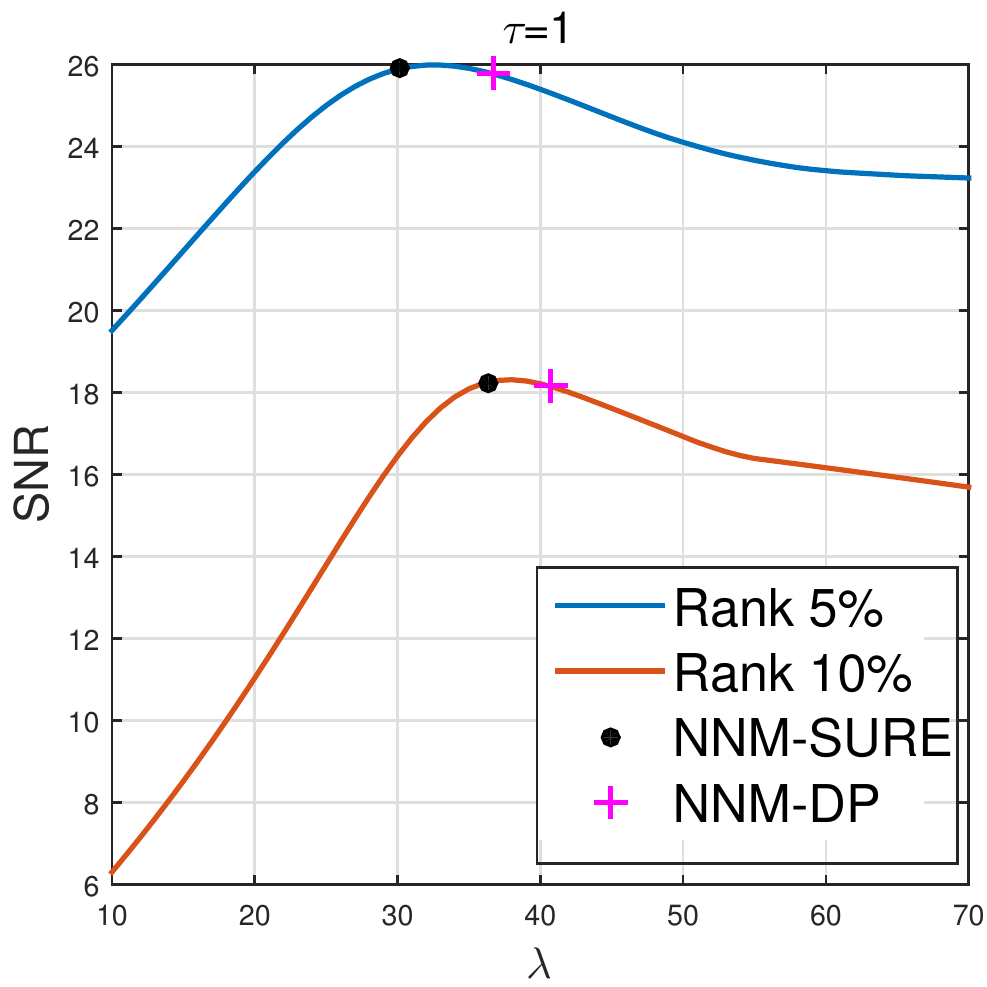}
    \end{minipage}
    }%
    {
    \begin{minipage}[t]{0.23\linewidth}
    \centering
    \includegraphics[height=2.8cm,width=3cm]{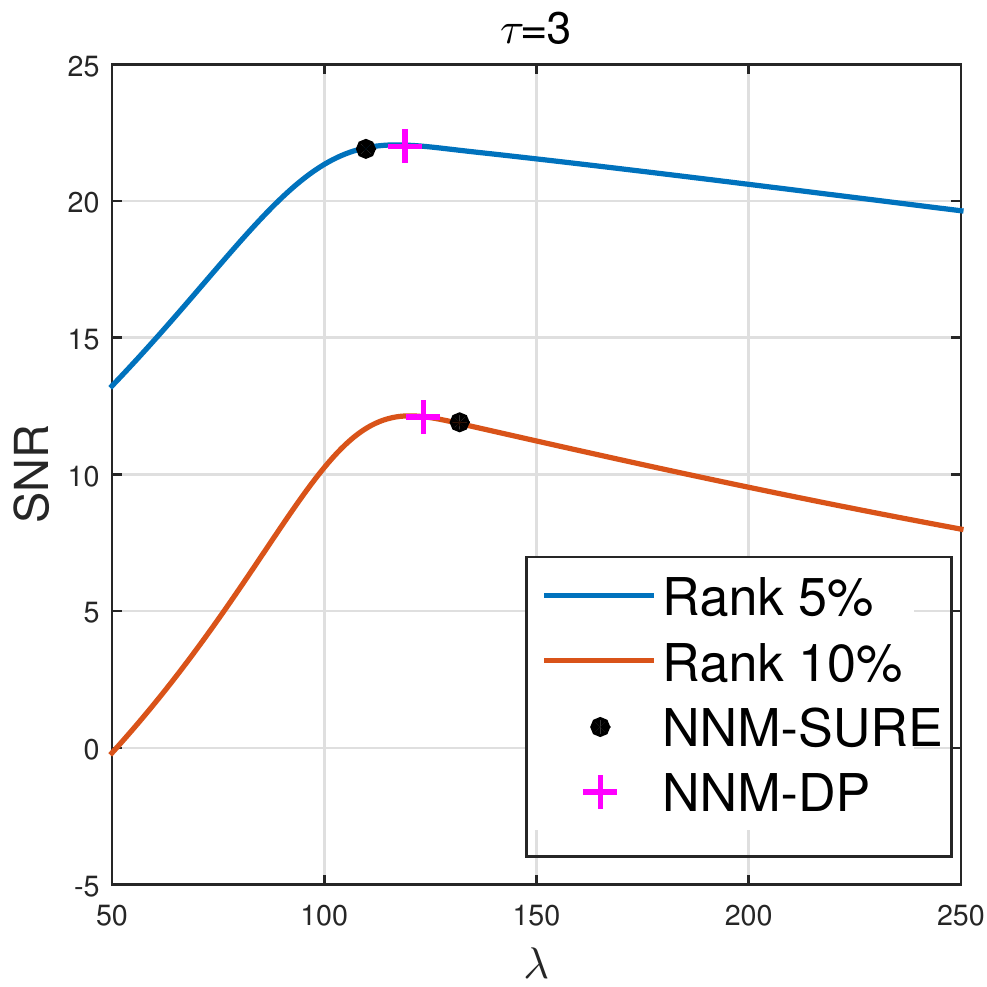}
    \end{minipage}
    }
    {
    \begin{minipage}[t]{0.23\linewidth}
    \centering
    \includegraphics[height=2.8cm,width=3cm]{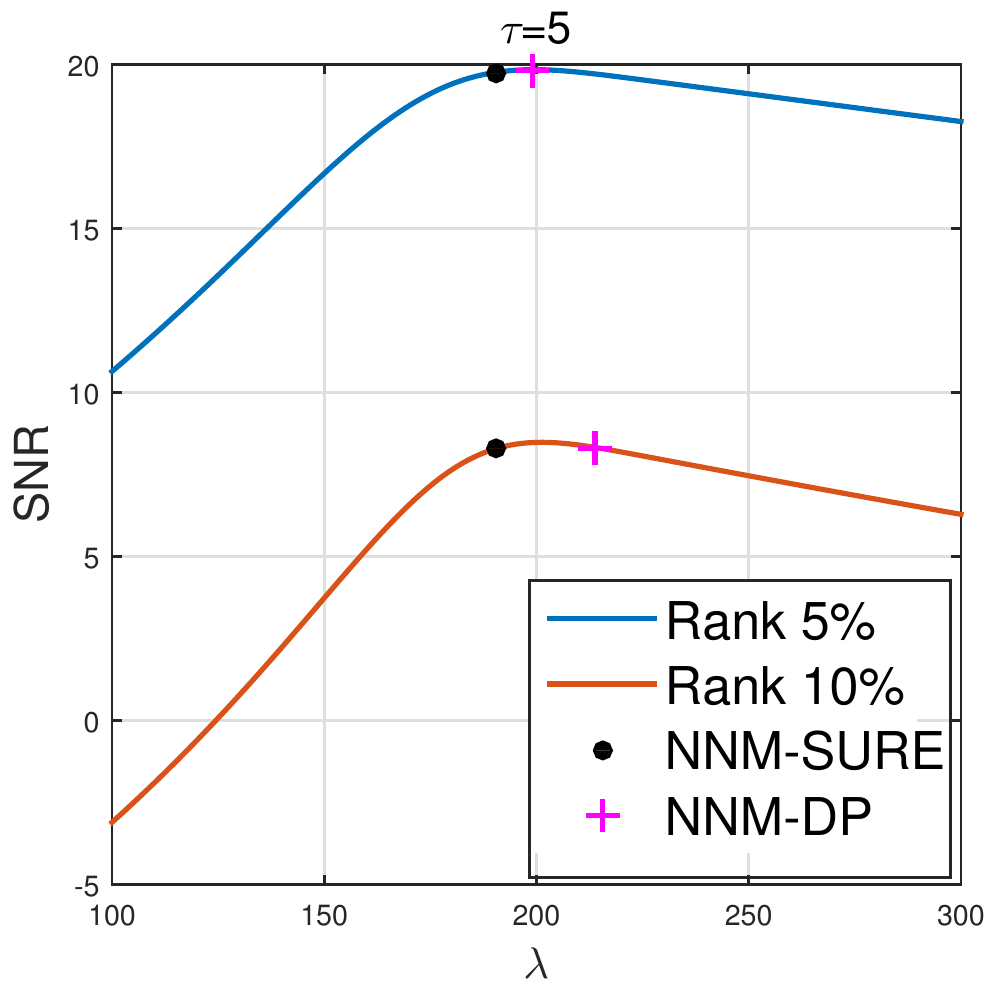}
    \end{minipage}
    }
    {
    \begin{minipage}[t]{0.23\linewidth}
    \centering
    \includegraphics[height=2.8cm,width=3cm]{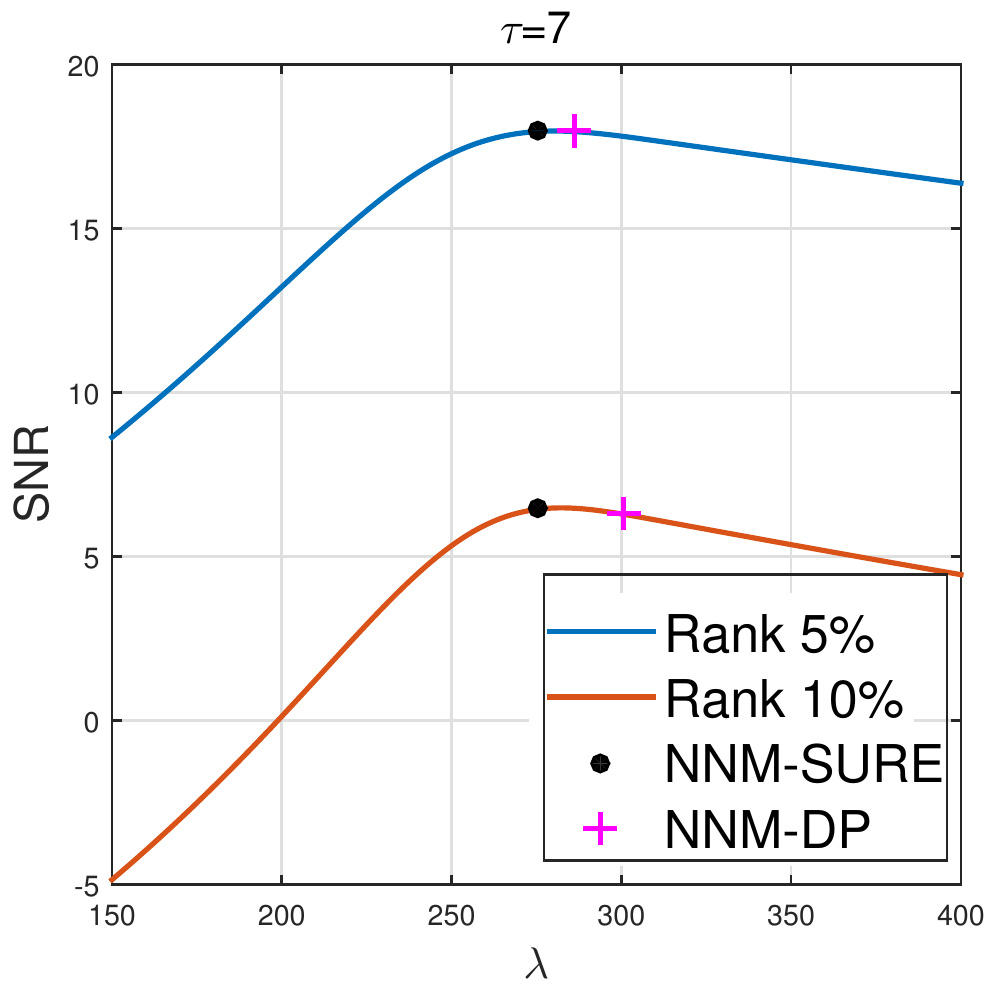}
    \end{minipage}
    }
    \begin{center}
    $m=n=500$
    \end{center}
    {
    \begin{minipage}[t]{0.23\linewidth}
    \centering
    \includegraphics[height=2.8cm,width=3cm]{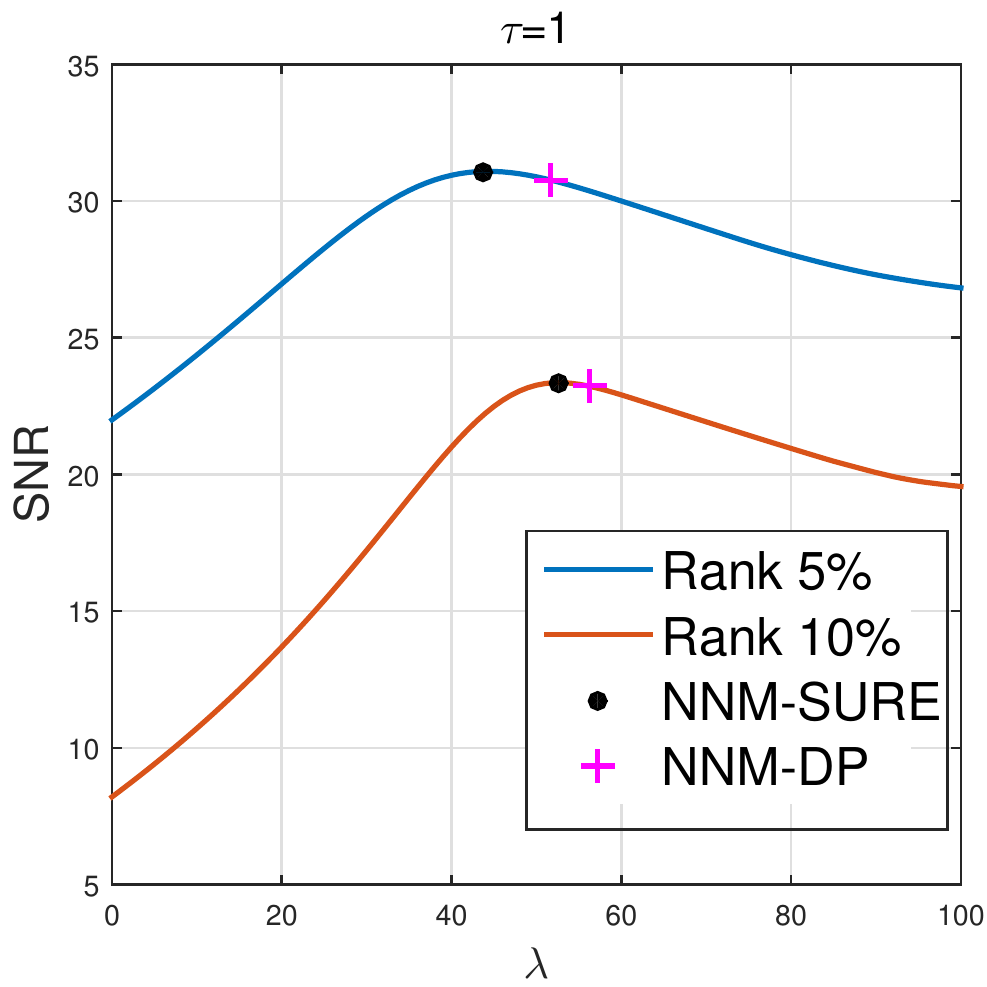}
    \end{minipage}
    }%
    {
    \begin{minipage}[t]{0.23\linewidth}
    \centering
    \includegraphics[height=2.8cm,width=3cm]{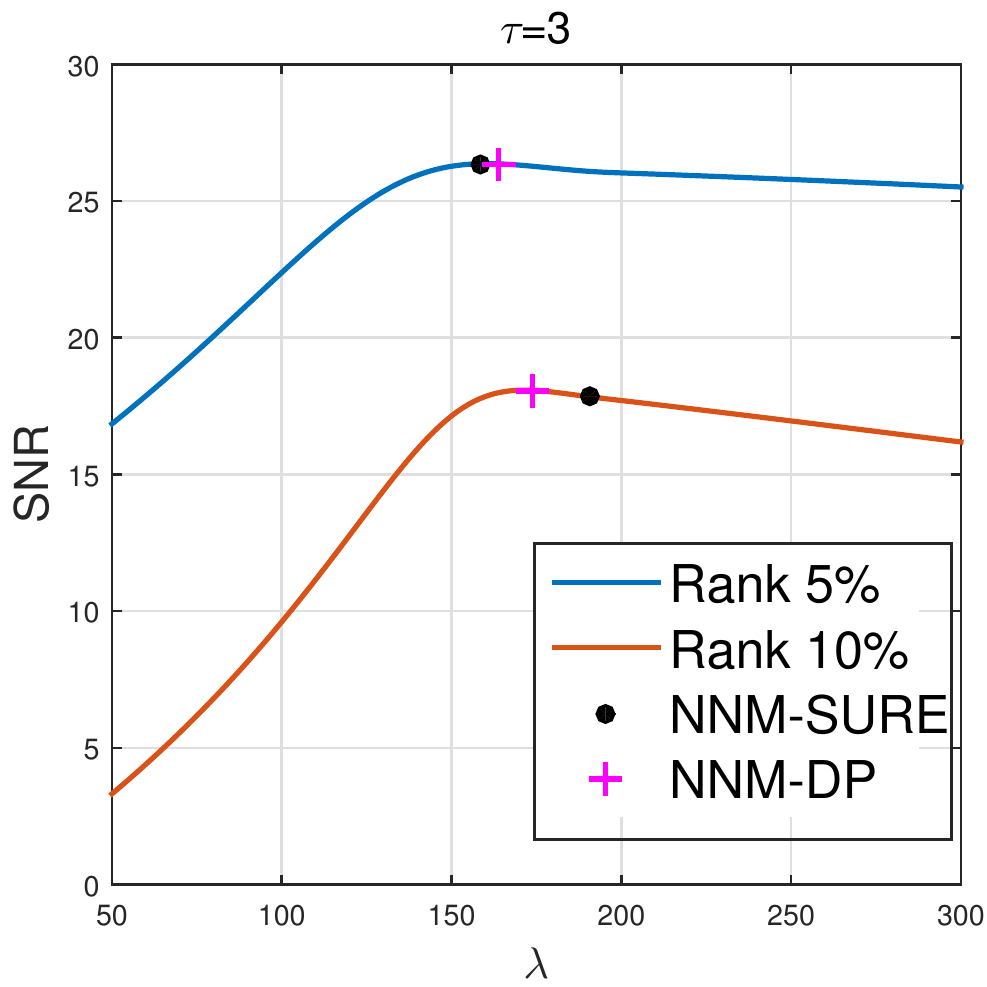}
    \end{minipage}
    }
    {
    \begin{minipage}[t]{0.23\linewidth}
    \centering
    \includegraphics[height=2.8cm,width=3cm]{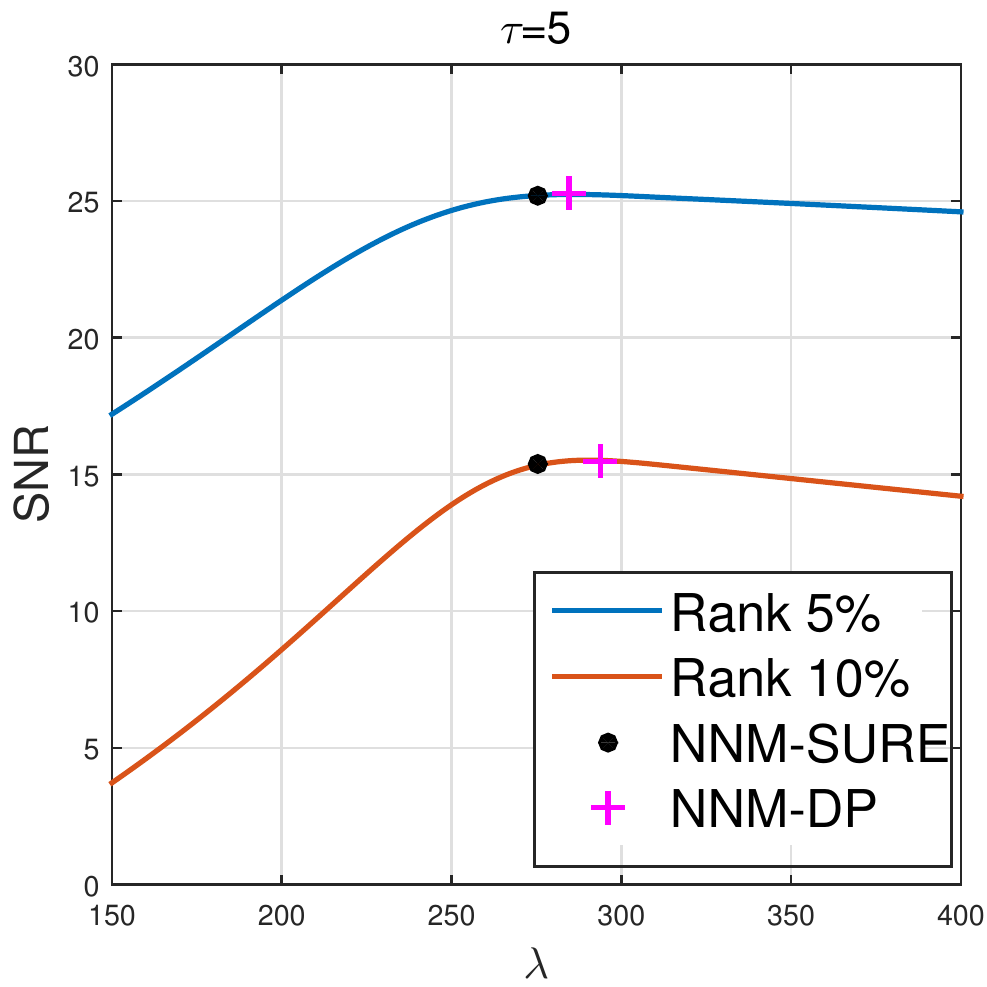}
    \end{minipage}
    }
    {
    \begin{minipage}[t]{0.23\linewidth}
    \centering
    \includegraphics[height=2.8cm,width=3cm]{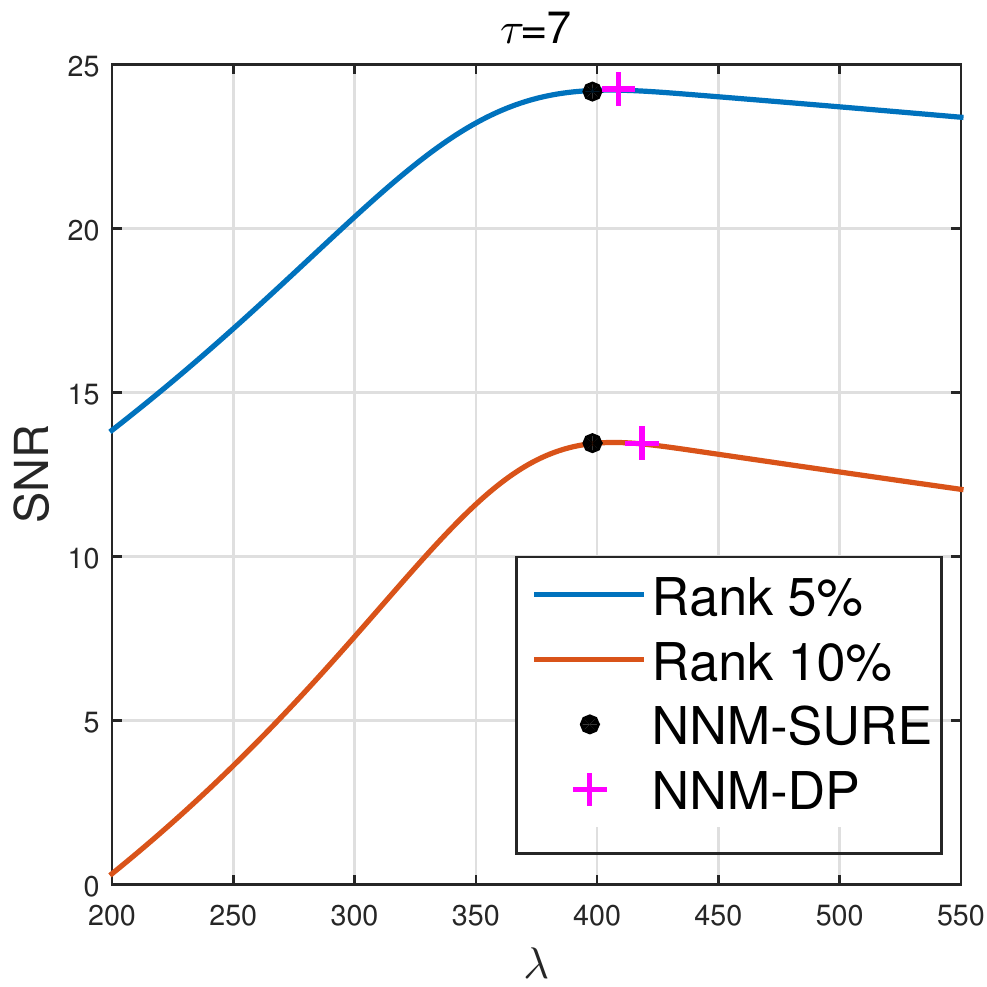}
    \end{minipage}
    }
    \begin{center}
    $m=n=1000$
    \end{center}
    {
    \begin{minipage}[t]{0.23\linewidth}
    \centering
        \includegraphics[height=2.8cm,width=3cm]{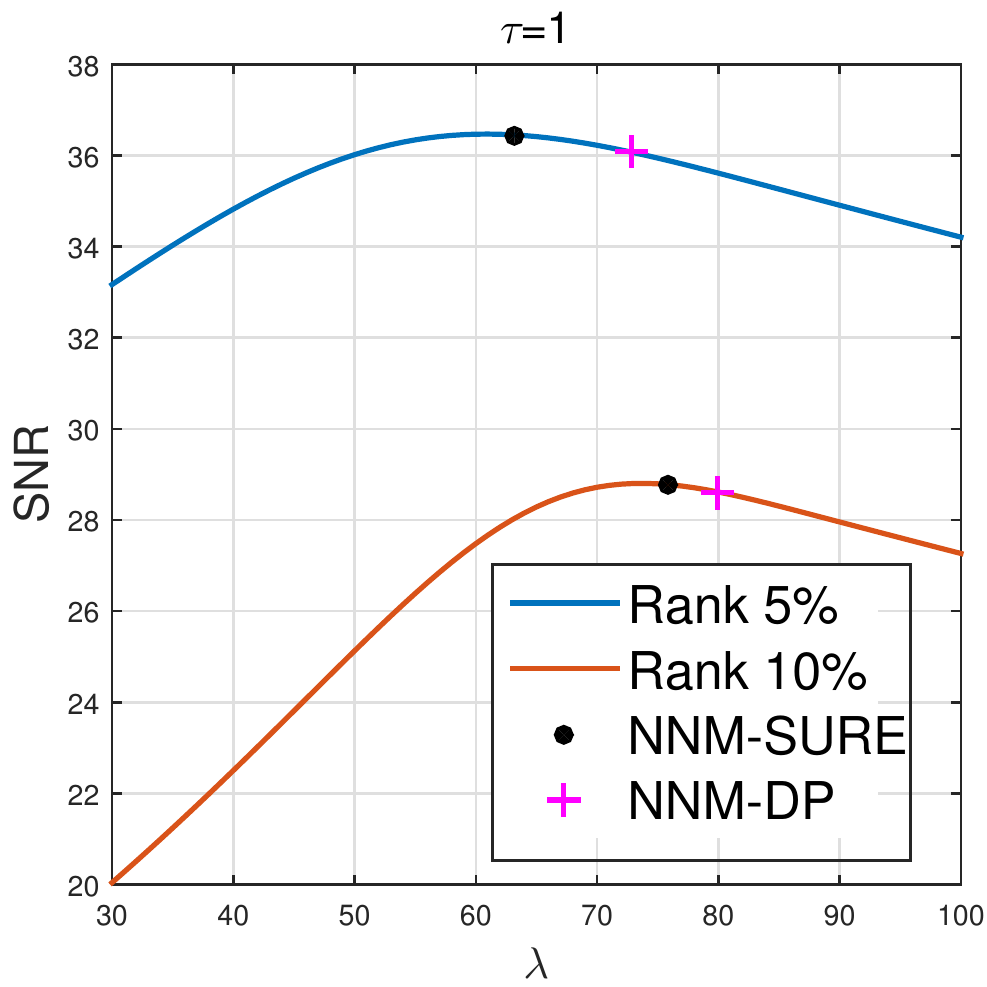}
    \end{minipage}
    }%
    {
    \begin{minipage}[t]{0.23\linewidth}
    \centering
    \includegraphics[height=2.8cm,width=3cm]{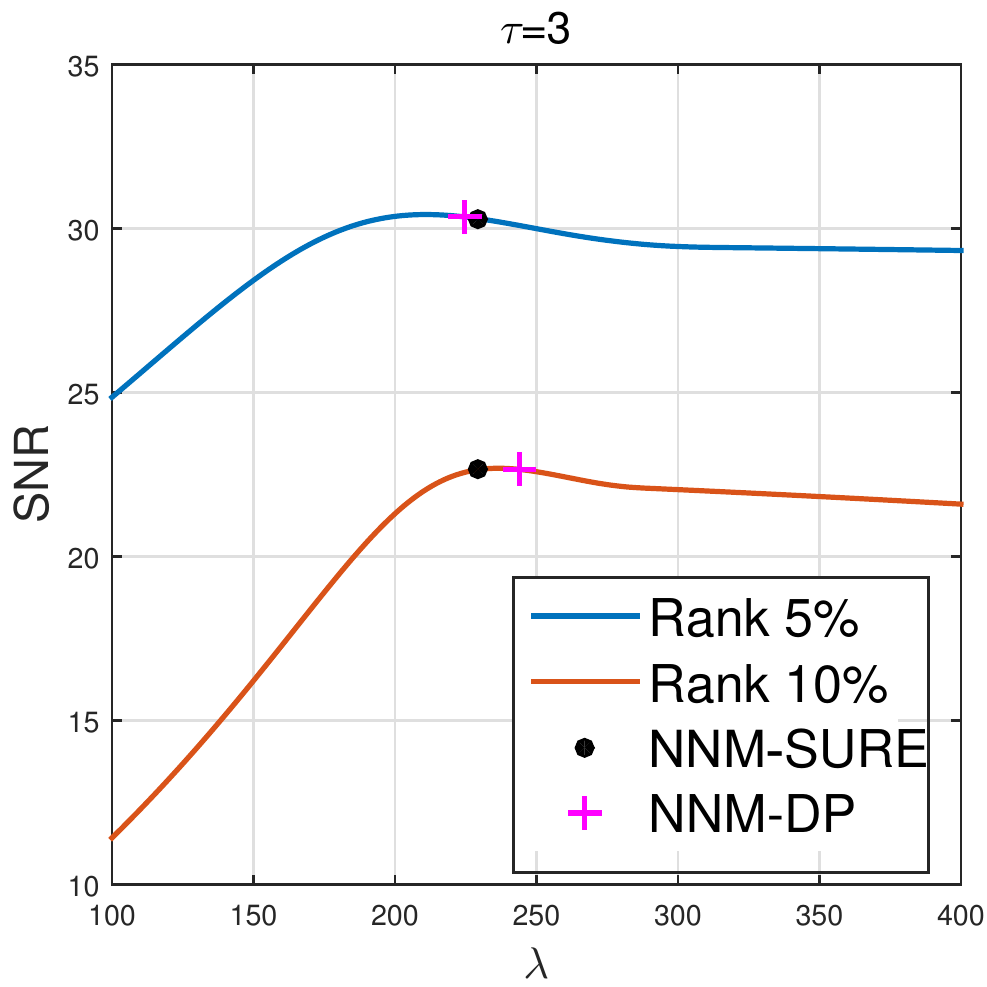}
    \end{minipage}
    }
    {
    \begin{minipage}[t]{0.23\linewidth}
    \centering
    \includegraphics[height=2.8cm,width=3cm]{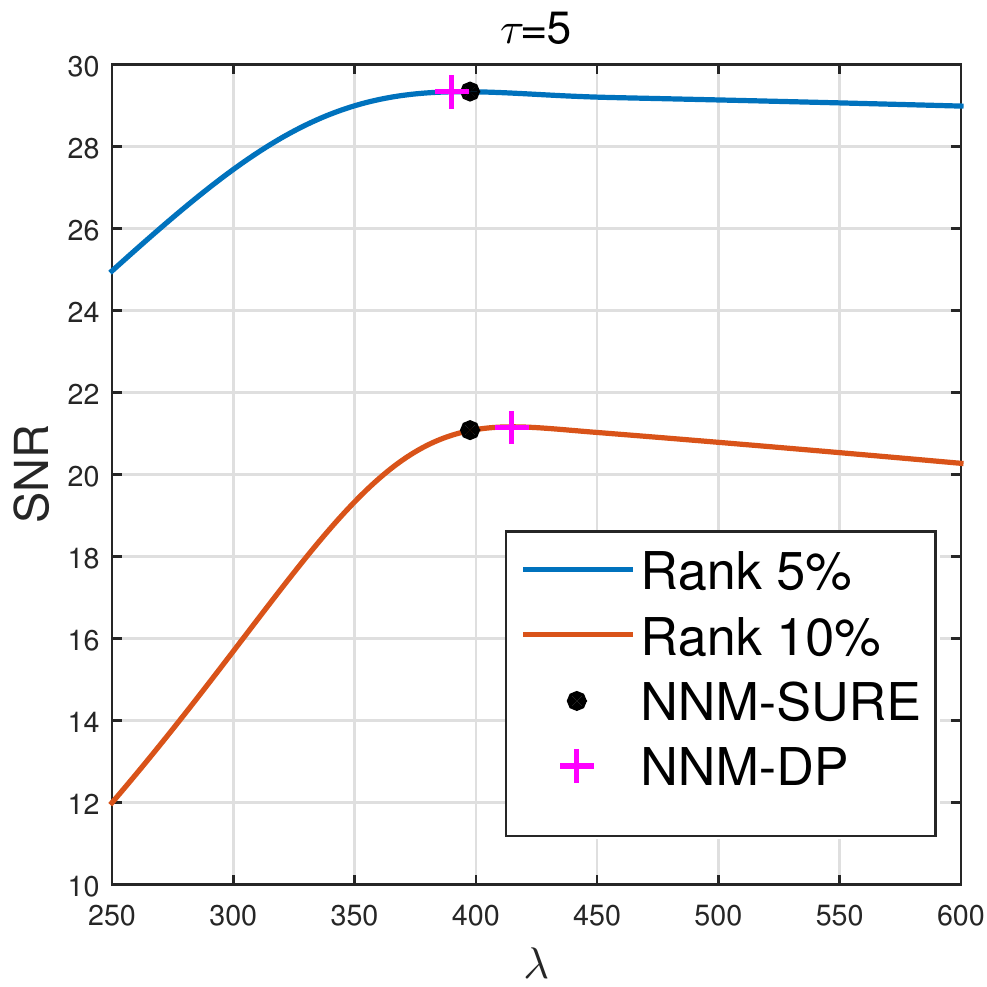}
    \end{minipage}
    }
    {
    \begin{minipage}[t]{0.23\linewidth}
    \centering
    \includegraphics[height=2.8cm,width=3cm]{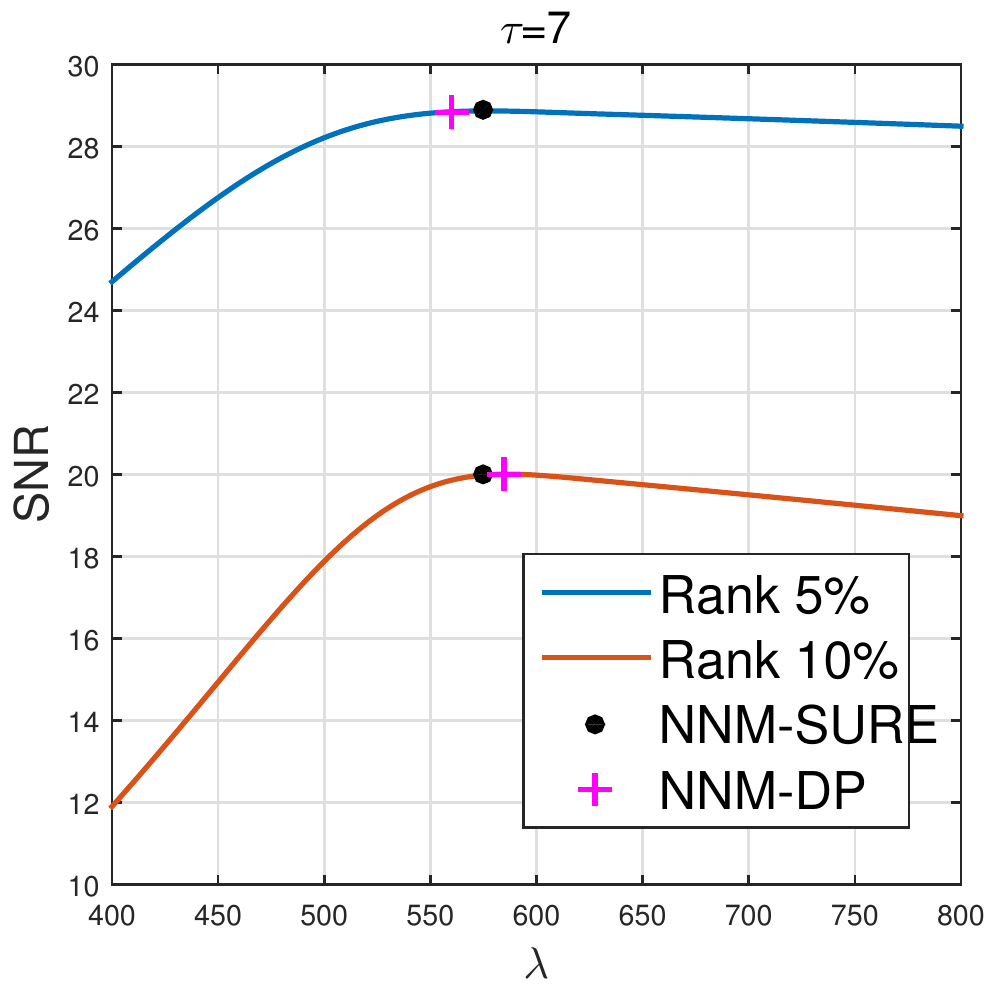}
    \end{minipage}
    }

    \begin{center}
    $m=n=2000$
    \end{center}
    \centering
    \caption{
    The  {threshold parameter} $\lambda$ versus SNR for the different rank rations, different noise levels, and different matrix size. The parameters determined by ``SURE'' and ``NNM-DP'' are represented by black ``$\bullet$'', and magenta ``$\textbf{+}$'', respectively.}
    \label{fig1}
    \end{figure}

    We first show that the selected regularization parameter by our method is a good one.
    We consider the standard nuclear norm as the penalty function.
    Rank ratio  is set to $\rho=5\%$ and $10\%$ and {the noise standard derivation level} is set to $\tau=1,3,5,7$ respectively.
    The size of the matrix is set to $m=n=500, 1000,2000$ respectively.

    The plots in  Figure \ref{fig1} show the curves of the SNR against the  {regularization parameter} $\lambda$ for different rank ratios and different noise levels.
    The SNR obtained by our approach {(i.e., solving the minimization problem \eqref{unconstrain} using Algorithm \ref{LRsvd})} and by the ``SURE'' are marked by black ``$\bullet$'' and magenta ``$+$'' respectively.
    We observe in the figures that even if the noise level is the same, the optimal  {regularization parameter} varies with the size of the matrix.
    We also observe that the SNRs obtained by ``SURE'' are close to the maximum of the curve in most cases because the parameter obtained by ``SURE'' has the minimum MSE within a given range of the parameter.

    The {regularization parameter} obtained by our method tend to be slightly larger than the optimal one. This is consistent with the observations that the selection of discrepancy principle with $c=1$ usually yields an over smoothed result \cite{1989Image, 1992Methods}.
    In order to obtain the optimal SNRs, {one can} choose a small value for $c$ (i.e., $c<1$) or decrease the  {regularization parameter}.

    We remark that when the size of the matrices is big and the rank ratio is small (see the last row in the figure), our method gives better {$\lambda$}.
    {We also note that ``SURE'' obtains the regularization parameters by solving the minimization problem (\ref{SURE}) by trial-and-error,} which is time-consuming,
    while our method provides a closed form formula for $\lambda$ directly (see \eqref{lambda2} and \eqref{wlambda}). We will see below that the CPU running time of our method is indeed much faster than that of ``SURE''.
    %\del{In fact, the speed of  ``SURE'' is  affected by the number of  $\lambda$. To further highlight the advantage in running time of our method, we compare the number of $\lambda$' s with time and SNR.Here we set the rank ratio as $\rho = 10\%$ and noise standard deviation $\tau = 3$.
    %The results are shown in the Figure \ref{Numberoflammbda}.  we can observe from Figure \ref{Numberoflammbda} that the time of ``SURE'' increases with the increase of the number of $\lambda$, and the SNRs increase first and then tend to stabilize with the increase of $\lambda$. Note that 50 $\lambda$ should be enough to get a reasonable SNR for ``SURE,  and our advantage in time is about 10.2, 12.5 and 8.2 times for $m=n=500, 1000, $ and 2000.}

    \subsubsection{Comparisons of Denoising Results}
    In this subsection, we compare the different methods in denoising. Here two tasks are performed.
    The first one is based on the {full} observation matrix, and the second one is based on the randomized projected matrix after dimensionality reduction. In the randomized case, the size $\ell$ of the projection matrix $Q_\ell$ (see Subsection \ref{RanSVD}) is specified in advance for the best rank-$\ell$ approximation. Here we take $\ell={s}+5$, where $s=\mathrm{round}(\rho \max(m,n))$.
    According to \eqref{randupperbound}, the bound $\hat{\eta}$ for the matrix $\widehat{Y}_\ell=YQ_\ell$ is given by
    $$\hat{\eta}=\eta-\norm{Y}_F^2+\norm{\widehat{Y}_\ell}_F^2.$$
    Note that ``HardT'' and ``SURE'' need the noise level of the low-dimensional matrix $\widehat{Y}_\ell$ as an input argument.
    Its noise level can be estimated by   $\hat{\tau}= (c mn\tau^2 -\norm{Y}_F^2+\norm{\widehat{Y}_\ell}_F^2)/(n\ell)$ if the noise level $\tau$ of $Y$ is given. Otherwise, one needs to estimate it either implicitly or explicitly  from $\widehat{Y}_\ell$.

    We set the matrix size  $m=n=500, 1000, 2000,4000$, and {the standard deviation $\tau=3,5$ respectively.}
    The rank ratio $\rho$ is set to $ 1\%, 5\%, 10\%, 20\%, 40\%$, and the rank of the clean matrix {$X_\dagger$} is fixed to $\rho \max(m,n)$.
    %The SNRs obtained by different methods are shown in \ref{Table1} for standard approach and \ref{Table2} for randomized approach.
    We consider four different types of objective functions: rank$(X)$, $\|X\|_*$, $\|X\|_r$ {(with $r=1$)} and $\|X\|_{\vw,*}$. In fact, the penalty functions $\|X\|_*$, $\|X\|_r$ and $\|X\|_{\vw,*}$ are relaxations of the function rank$(X)$.
    %Here we repeat the experiments ten times for SNR and CPU running times and then average the results.
    Here the SNRs and CPU running times are computed by running the experiments 10 times and taking the average of the 10 tests.
    {We use boldface to mark the best method for a given {matrix} amongst all the different regularization norms and methods; and we use the italic font to mark the best method for the given regularization norm.}

    % Table generated by Excel2LaTeX from sheet 'Sheet1'
    %\renewcommand\arraystretch{1.2}
    \begin{table}[t]
    \centering
    \caption{Comparison of SNR(dB) for different methods with standard algorithm.} \large
    \scalebox{0.6}{
    \begin{tabular}{M{1.5cm}M{1cm}M{1cm}|M{1.8cm}|M{1.8cm}M{1.8cm}|M{1.8cm}M{1.8cm}|M{1.8cm}M{1.8cm}}
    %\specialrule{0em}{4pt}{4pt}
    \hline
    &       & $f(X)$  & Rank$(X)$  & \multicolumn{2}{c|}{$\|X\|_*$} & \multicolumn{2}{c|}{$\|X\|_r$} &\multicolumn{2}{c}{ $\|X\|_{\vw,*}$} \\ \hline
    $m=n$&   $\tau$    & $\rho$  &  HardT &  NN-SURE &  NN-DP &  TNN-SURE & TNN-DP &  GWNN-SURE & GWNN-DP \\ \hline
    500
    &3
    & 1\%   & 14.47  & 11.78  & \textit{11.96}  & \textit{\textbf{14.47 }} & \textit{\textbf{14.47 }} & \textit{13.49}& 13.05  \\
    &       & 5\%   & \textbf{22.99}  & 21.84  & \textit{21.94}  & \textit{\textbf{22.99 }} & \textit{\textbf{22.99 }}&\textit{22.82} & \textit{22.82}  \\
    &       & 10\%  & 26.25  & 25.69  & \textit{25.72}  & 26.25  & \textit{\textbf{26.29 }} & \textit{26.22}& 26.21  \\
    &       & 20\%  & 29.43  &\textit{29.30}  & \textsl{29.30}  & \textit{\textbf{29.59 }} & \textit{\textbf{29.59 }}&29.51 & \textit{29.54}  \\
    &       & 40\%  & 32.48  & 32.72  & \textit{32.75}  & 32.84  & \textit{\textbf{32.89 }} & \textit{32.84} &\textit{32.84}  \\
    &5 & 1\%   & \textbf{11.35 } &\textit{8.53}  & 8.51  & \textit{\textbf{11.35 }} & \textit{\textbf{11.35 }} &\textit{9.96}& 9.31  \\
    &       & 5\%   & \textbf{21.80 } & 19.71  & \textit{19.75}  & \textit{\textbf{21.80 }} & 21.79  & \textit{21.30}& 21.26  \\
    &       & 10\%  & \textbf{25.57 } & 24.16  & \textit{24.20}  & \textit{\textbf{25.57 }} & 25.53  & \textit{25.34}& 25.32  \\
    &       & 20\%  & \textbf{29.06 } & 28.26  & \textit{28.27}  & \textit{\textbf{29.06 }} & 29.03  & \textit{28.97}&28.94  \\
    &       & 40\%  & 32.29  & \textit{31.92}  & 31.91  & \textit{\textbf{32.31} } & \textit{\textbf{32.31 }} & \textit{32.27}&32.26  \\ \hline
    \multicolumn{3}{c|}{Average SNR} & 24.57  & 23.39  & \textit{23.43}  & \textit{\textbf{24.62 }} & \textit{\textbf{24.62 }}&\textit{24.27} & 24.16  \\\hline
    \multicolumn{3}{c|}{ {Average time ({sec})}} &  {0.054}  &  {1.124}  &  {0.059}  &  {1.178}  &  {0.060}  & 1.144& {0.058}  \\
    \hline
    1000
    & 3
    & 1\%   & 19.22  & 17.69  & \textit{17.94}  & 19.22  & \textit{\textbf{19.25 }} &18.90 & \textit{18.97}  \\
    &       & 5\%   & 26.46  & \textit{26.34}  & \textit{26.34}  & 26.64  & \textit{\textbf{26.65 }} & 26.61 & \textit{26.62}  \\
    &       & 10\%  & 29.51  & 29.71  & \textit{29.72}  & 29.87  & \textit{\textbf{29.88 }} &  29.84 &\textit{29.85}   \\
    &       & 20\%  & 32.54  & \textit{33.14}  & 33.10  & \textit{\textbf{33.20 }} & 33.18  &\textit{33.16} & 33.15  \\
    &       & 40\%  & 35.39  & \textit{36.67}  & 36.59  & \textit{\textbf{36.71 }} & 36.63  &  \textit{36.65}& 36.59  \\
    & 5 & 1\%   & \textbf{17.76 } & 15.34  & \textit{15.48}  & \textit{\textbf{17.76 }} & \textit{\textbf{17.76 }} &   \textit{16.98} &16.90  \\
    &       & 5\%   & \textbf{26.08 } & 25.22  & \textit{25.25}  & \textit{\textbf{26.08 }} & 26.05 & \textit{25.97} & 25.95  \\
    &       & 10\%  & \textbf{29.32 } & \textit{28.89}  & 28.88  & \textit{\textbf{29.32 }} & 29.31 &\textit{29.30} & 29.26  \\
    &       & 20\%  & 32.44  & \textit{32.29}  & \textit{32.29}  & \textit{\textbf{32.52 }} & 32.51 &\textit{32.47} & 32.46  \\
    &       & 40\%  & 35.51  & 35.57  & \textit{35.63}  & 35.68  & \textit{\textbf{35.73 }} & 35.67 &\textit{35.69}  \\ \hline
    \multicolumn{3}{c|}{Average SNR} & 28.42  & 28.09  & \textit{28.12}  & \textit{\textbf{28.70 }} & \textit{\textbf{28.70 }}& \textit{28.56} & 28.54  \\ \hline
    \multicolumn{3}{c|}{ {Average time  (sec)}} &  {0.240}  &  {5.646}  &  {0.261}  &  {5.992}  &  {0.267}  & 5.895 & {0.249}  \\
    \hline
    2000 &
    3& 1\%  & 22.64  & 22.62  & \textit{22.63}  & 23.02  & \textit{\textbf{23.09} } & 23.04 &\textit{23.08}  \\
    &       & 5\%   & 29.54  & 30.27  & \textit{30.31}  & 30.37  & \textit{30.40}  & \textbf{\textit{30.50}} &{30.44 } \\
    &       & 10\%  & 32.48  & \textit{33.79}  & 33.67  & \textit{33.83}  & 33.71  & \textbf{\textit{33.80}} &{33.74 } \\
    &       & 20\%  & 35.64  & \textit{37.27}  & 37.11  & \textit{{37.28 }} & 37.13 &\textbf{\textit{37.33}} & 37.14  \\
    &       & 40\%  & 39.12  & \textit{41.06}  & 40.74  & \textit{\textbf{41.07 }} & 40.74 &\textit{41.05} & 40.74  \\
    & 5 & 1\%   & \textbf{22.17 } & 21.07  & \textit{21.15}  & \textit{\textbf{22.17 }} & 22.16 &21.99 & \textit{22.00}  \\
    &       & 5\%   & 29.46  & \textit{29.30}  & 29.29  &\textit{\textbf{29.53 }} &\textit{\textbf{29.53}} &29.46 & \textit{29.49}  \\
    &       & 10\%  & 32.52  & 32.54  & \textit{32.59}  & 32.67  & \textit{\textbf{32.70 }} & \textit{32.67} &\textit{32.67}  \\
    &       & 20\%  & 35.55  & 35.86  & \textit{35.87}  & 35.91  & \textit{\textbf{35.92 }} & 35.85 & \textit{35.89}  \\
    &       & 40\%  & 38.55  & \textit{39.26}  & 39.25  & \textit{39.29}  & \textit{\textbf{39.27 }} & \textit{39.26} & 39.23  \\\hline
    \multicolumn{3}{c|}{Average SNR} & 31.77  & \textit{32.30}  & 32.26  & \textit{\textbf{32.51 }} & 32.47  & \textit{32.50} & 32.44  \\ \hline
    \multicolumn{3}{c|}{ {Average time (sec)}} &  {1.572}  &  {28.956}  &  {1.571}  &  {31.651}  &  {1.583}  & 30.140 & {1.562}  \\
    \hline
    4000 & 3
    & 1\%   & 26.06  & \textit{27.19}  & 27.12  & \textit{{27.35 }} & 27.27  & \textbf{\textit{27.58}} &27.48  \\
    &       & 5\%   & 33.37  & \textit{34.85}  & 34.66  & \textit{{34.88 }} & 34.68 & \textbf{\textit{35.05}} & 34.85  \\
    &       & 10\%  & 36.70  & \textit{38.23}  & 38.03  & \textit{{38.25 }} & 38.04 & \textbf{\textit{38.42}} & 38.16  \\
    &       & 20\%  & 40.21  & \textit{41.95}  & 41.54  & \textit{{41.96 }} & 41.55  & \textbf{\textit{42.05}}& 41.62  \\
    &       & 40\%  & 43.99  & \textit{\textbf{45.90 }} & 45.28  & \textit{{45.90 }} & 45.29 &\textbf{\textit{45.92}} & 45.32  \\
    & 5 & 1\%   & 25.57  & \textit{25.44}  & \textit{25.44}  & \textit{\textbf{25.75 }} & \textit{\textbf{25.75 }}&25.72 & \textit{25.73}  \\
    &       & 5\%   & 32.55  & 32.87  & \textit{32.97}  & 32.92  & \textit{{33.03 }} & 33.00 & \textbf{\textit{33.04}}  \\
    &       & 10\%  & 35.56  & \textit{36.31}  & 36.25  & \textit{\textbf{36.33 }} & 36.28  & \textit{36.32} & 36.28  \\
    &       & 20\%  & 38.44  & \textit{39.69}  & 39.63  & \textit{\textbf{39.70 }} & 39.64  & \textit{39.70} & 39.64  \\
    &       & 40\%  & 41.61  & \textit{43.38}  & 43.17  & \textit{\textbf{43.39 }} & 43.18  & \textit{43.32} & 43.16  \\\hline
    \multicolumn{3}{c|}{Average SNR} & 35.41  & \textit{36.58}  & 36.41  & \textit{{36.64 }} & 36.47  & \textbf{\textit{36.71}} & 36.53  \\\hline
    \multicolumn{3}{c|}{ {Average time (sec)}} &  {{14.232}}  &  {159.368}  &  \textit{14.313}  &  {161.864}  &  \textit{14.355}  & 160.932& {14.334}  \\
    \hline
    \end{tabular}}%
    \label{Table1}%
    \end{table}%

    \begin{table}[t]
    \centering
    \caption{Comparison of SNR(dB) for different methods with randomized algorithm.} \large
    \scalebox{0.6}{
    \begin{tabular}{M{1.5cm}M{1cm}M{1cm}|M{1.8cm}|M{1.8cm}M{1.8cm}|M{1.8cm}M{1.8cm}|M{1.8cm}M{1.8cm}}
    \hline
    &       & $f(X)$  & Rank$(X)$  & \multicolumn{2}{c|}{$\|X\|_*$} & \multicolumn{2}{c|}{$\|X\|_r$} &\multicolumn{2}{c}{ $\|X\|_{\vw,*}$} \\ \hline
    $m=n$&   $\tau$    & $\rho$  &  HardT &  NN-SURE &  NN-DP &  TNN-SURE & TNN-DP &  GWNN-SURE & GWNN-DP \\ \hline

    500 & 3
    & 1\%   & \textbf{14.47 } & 11.78 & \textit{12.28}  & \textit{\textbf{14.47 }} & \textit{\textbf{14.47 }} & \textit{13.62} &13.15  \\
    &       & 5\%   & \textbf{22.99 } & 21.75  & \textit{22.02}  & \textit{\textbf{22.99 }} & 22.97  & 22.75 & \textit{22.81}  \\
    &       & 10\%  & \textbf{26.25 } & 25.52  & \textit{25.72}  & \textit{\textbf{26.25 }} & \textit{\textbf{26.25 }} &26.15 & \textit{26.18}  \\
    &       & 20\%  & 29.43  & 29.05  & \textit{29.29}  & {29.43}  & \textit{\textbf{29.57 }} & 29.39 & \textit{29.52 } \\
    &       & 40\%  & 32.48  & 32.19  & \textit{32.74}  & 32.48  & \textit{\textbf{32.88 }} & 32.47& \textit{32.84 } \\
    &5 & 1\%   & \textbf{11.35 } & 8.19  & \textit{8.79}  & \textit{\textbf{11.35 }} & \textit{\textbf{11.35 }} &\textit{9.97} & 9.45  \\
    &       & 5\%   & \textbf{21.80 } & 19.39  & \textit{19.90}  & \textit{\textbf{21.80 }} & 21.79  & \textit{21.30} &21.29  \\
    &       & 10\%  & \textbf{25.57 } & 23.90  & \textit{24.25}  & \textit{\textbf{25.57 }} & 25.52  & \textit{25.34} &25.32  \\
    &       & 20\%  & \textbf{29.06 } & 28.04  & \textit{28.27}  & \textit{\textbf{29.06 }} & 29.03  & 28.93 & \textit{28.94}  \\
    &       & 40\%  & 32.29  & 31.72  & \textit{31.91}  &  32.29  & \textit{\textbf{32.31 }} & 32.25 & \textit{32.26 } \\ \hline
    \multicolumn{3}{c|}{Average SNR} & 24.57  & 23.15  & \textit{23.52}  & 24.57  & \textit{\textbf{24.61 }} & \textit{24.22}&24.18  \\\hline
    \multicolumn{3}{c|}{ {Average time (sec)}} &  {0.005}  &  {0.116}  &  {0.006}  &  {0.123}  &  {0.007}  & 0.117& {0.006}  \\
    \hline
    1000 & 3
    & 1\%   & \textbf{19.22 } & 17.71  & \textit{18.11}  & \textit{\textbf{19.22}}  & 19.18  & 18.90& \textit{18.95}  \\
    &       & 5\%   & 26.46  & 26.00  & \textit{26.25}  & 26.46  & \textit{\textbf{26.51 }} & 26.43& \textit{26.47}  \\
    &       & 10\%  & 29.51  & 29.32  & \textit{29.64}  & 29.51  & \textit{\textbf{29.78 }} & 29.50 & \textit{29.75}  \\
    &       & 20\%  & 32.54  & 32.53  & \textit{33.07}  & 32.54  & \textit{\textbf{33.14 }} & 32.54 & \textit{33.12}  \\
    &       & 40\%  & 35.56  & 35.76  & \textit{36.59}  & 35.82  & \textit{\textbf{36.62 }} &  35.89 &  \textit{36.59}  \\
    & 5 & 1\%   & \textbf{17.76 } & 15.56  & \textit{15.80}  & \textit{\textbf{17.76}}  & 17.75  &\textit{17.19} & 16.99  \\
    &       & 5\%   & \textbf{26.08 } & 25.04  & \textit{25.30}  & \textbf{\textit{26.08}}  & 26.01 & \textit{25.97} & 25.92  \\
    &       & 10\%  & \textbf{29.32 } & 28.72  & \textit{28.88}  & \textit{\textbf{29.32}}  & 29.28 &\textit{29.27} & 29.23  \\
    &       & 20\%  & 32.44  & 32.12  & \textit{32.28}  & 32.44  & \textit{\textbf{32.49} } & 32.41 &\textit{32.45}  \\
    &       & 40\%  & 35.51  & 35.35  & \textit{35.63}  & 35.51  & \textit{\textbf{35.73 }} & 35.51 &\textit{35.68}  \\\hline
    \multicolumn{3}{c|}{Average SNR} & 28.44  & 27.81  & \textit{28.16}  & 28.47  & \textit{\textbf{28.65 }} & 28.36& \textit{28.52} \\\hline
    \multicolumn{3}{c|}{ {Average time (sec)}} &  {0.020}  &  {0.366}  &  {0.022}  &  {0.394}  &  {0.023}  &  0.393 &{0.021}  \\
    \hline
    2000 & 3
    & 1\%   & 22.64 & 22.19 & \textit{22.25} & {22.49} & \textit{\textbf{22.55}} & 22.32 & \textit{22.52} \\
    &       & 5\%   & 29.50  & 29.69 &  \textit{30.01} & 29.73 & \textit{30.08} &29.49 &\textit{\textbf{30.09}} \\
    &       & 10\%  & 32.54 & 33.33 & \textit{33.51} & {33.36} & \textit{33.55} & 33.15 &\textit{\textbf{33.56}} \\
    &       & 20\%  & 35.64 & 36.83 & \textit{37.06} & 36.84 & \textit{37.08} & 37.02 &\textit{\textbf{37.09}} \\
    &       & 40\%  & 38.95 & 40.72 & \textit{40.73} & 40.73 & \textit{\textbf{40.74}} & 40.54 & \textit{40.73} \\
    & 5 & 1\%   & \textbf{22.17} & 21.25 & \textit{21.27} & \textit{\textbf{22.17}} & 22.09 & \textit{21.99}& 21.96 \\
    &       & 5\%   & \textbf{29.46} & 29.12 & \textit{29.23} & \textit{\textbf{29.46}} & 29.43 &\textit{29.45}& 29.39 \\
    &       & 10\%  & 32.52 & 32.35 & \textit{32.52} & 32.52 & \textit{\textbf{32.63}} & 32.51 & \textit{32.59} \\
    &       & 20\%  & 35.55 & 35.46 & \textit{35.84} & 35.55 & \textit{\textbf{35.89}} & 35.54 & \textit{35.86} \\
    &       & 40\%  & 38.57 & 38.56 & \textit{39.24} & 38.57 & \textit{\textbf{39.27}} & 38.57 & \textit{39.23} \\\hline
    \multicolumn{3}{c|}{Average SNR} & 31.75  & 31.95  & \textit{32.17}  & 32.14  & \textit{\textbf{32.33}}&32.06 & \textit{32.30}  \\\hline
    \multicolumn{3}{c|}{ {Average time (sec)}} &  {0.089}  &  {1.810}  &  {0.095}  &  {2.040}  &  {0.094}  & 2.016& {0.092}  \\ \hline
    4000 & 3
    &       1\%      & 23.82 & 25.39  & \textit{26.14}  & 25.49  & \textit{26.21}  & 25.51 &\textit{\textbf{26.22 }} \\
    &       & 5\%   & 32.88  & 33.98  & \textit{34.34}  & 33.99  & \textit{34.36}  & 33.94 &\textit{\textbf{34.40 }} \\
    &       & 10\%  & 36.60  & 37.71  & \textit{37.89}  & 37.72  & \textit{37.90}  & 37.68&\textit{\textbf{37.96 }} \\
    &       & 20\%  & 40.11  & \textit{41.62}  & 41.51  & \textit{\textbf{41.62 }} & 41.51 & \textit{41.60}& 41.56  \\
    &       & 40\%  & 43.70  & \textit{45.66}  & 45.28  & \textit{\textbf{45.66 }} & 45.28  &\textit{ 45.64}&45.31  \\
    & 5 & 1\%   & 25.57  & 24.69  & \textit{25.16}  & 25.02  & \textit{\textbf{25.39 }} & \textit{24.92}& 25.37  \\
    &       & 5\%   & 32.55  & 32.33  & \textit{32.73}  & 32.36  & \textit{\textbf{32.78 }} & 32.48&\textit{32.77}  \\
    &       & 10\%  & 35.56  & 35.87  & \textit{36.11}  & 35.88  & \textit{\textbf{36.14 }} & 35.43 &\textit{36.13}  \\
    &       & 20\%  & 38.58  & 39.18  & \textit{39.59}  & 39.19  & \textit{\textbf{39.60 }} & 39.29 & \textit{39.59 } \\
    &       & 40\%  & 41.62  & 42.95  & \textit{43.17}  & 42.95  & \textit{\textbf{43.18 }} & 42.98 & \textit{43.15}  \\\hline
    \multicolumn{3}{c|}{Average SNR} & 35.10  & 35.94  & \textit{36.19}  & 35.99  & \textit{36.24}  &35.95& {\textit{\textbf{36.25 }}} \\\hline
    \multicolumn{3}{c|}{ {Average time (sec)}} &  \textbf{0.500}  &  {9.363}  &  {{\textit{0.509}}}  &  {9.532}  &  {{\textit{0.522}}} & 9.578 &  {0.516}  \\
    \hline
    \end{tabular}}%
    \label{Table2}%
    \end{table}%

    The SNRs obtained by different methods are shown in Table \ref{Table1} for the standard non-randomized algorithms.
    %The results for the standard approach are shown in \ref{Table1},
    From the table, we see that the SNRs obtained by the rank function are larger than those obtained by the nuclear norm and its variants when $X$ is of small rank and small size.
    With the increase of the rank and the size, the SNRs obtained by the rank function become smaller than those obtained by other functions.
    When comparing the parameter strategy of ``DP'' (i.e. ours) and   ``SURE'' {in NN model},
    we observe that the SNRs  are approximately equal, while the SNR obtained by our ``DP'' is slightly larger than that by ``SURE'' in {many of the} cases.
    We also observe that the SNRs obtained by TNN and GWNN are larger than those obtained by NN,  which indicates their superiority in characterizing low-rank properties.

    {In terms of CPU running time, our ``DP'' method  is  almost as fast as the ``HardT'' method as we only need one substitution to
    get $\lambda$ (see (\ref{lambda2}) and (\ref{wlambda})) and then obtain the solution by the thresholding operators similar to the ``HardT" method. In contrast, ``SURE" method requires a minimization of the SURE function (\ref{SURE}) by trial-and-error. According to the CPU running time, our ``DP'' method is at least 11  times faster than the ``SURE''  method.}
    %\tr{According to the recorded CPU running time,  our ``DP'' method is about 10 times faster than ``SURE'' at  small dimensions (500, 1000, 2000), and about 6.5 times faster with 4000 dimension.}

    Now we compare the performance of different methods for the randomized algorithms, and we list the SNRs in Table \ref{Table2}.
    Similar conclusions can be observed among different methods.
    {Note that for the nuclear-norm problem, our ``DP'' approach out-perform the ``SURE'' approach in 38 out of the 40 different cases we tested.}
    We also observe that in most cases, the SNRs obtained by the randomized algorithm are equal to or slightly lower than these by the standard algorithm.
    We remark that the difference of the SNRs comes from the difference between   $YQ_\ell Q_\ell^T$ and $ Y$.
    %In terms of the average CPU running time, the randomize algorithm is more than 18 times faster than the standard algorithms.
    {The \tr{speed} advantage of the randomization algorithm is more obvious with the increase of matrix dimension. When  $m=n=4000$, the random algorithm is 28 times faster than the standard algorithm.}

    \subsection{Simulations on PINCAT Numerical Phantom}
    \begin{table}[t]
    \centering
    \caption{SNRs comparison of different methods for the PINCAT Numerical Phantom.}\Large
    \scalebox{0.55}{%\setlength{\tabcolsep}{1.2mm}{
    \begin{tabular}{M{3.2cm} M{1.8cm} M{1.8cm} M{1.8cm} M{1.8cm} M{1.8cm} M{1.8cm} M{1.8cm} M{1.8cm}}
    \hline
    $\tau$ & {Noisy} & {HardT} & {NN-SURE} & {NN-DP}  & {TNN-SURE} & {TNN-DP}&  {GWNN-SURE} & {GWNN-DP}\\
    \hline
    5     & 27.50  & 33.19  & \textit{33.76}  & 33.71  & {34.34}  & \textit{34.42}  & \textit{\textbf{35.03} } & 35.02  \\
    10    & 21.48  & 27.60  & \textit{28.79}  & 28.73  & 29.64  & \textit{29.69}  & \textit{\textbf{30.13 }} & \textit{\textbf{30.13} } \\
    15    & 17.96  & 24.33  &\textit{ 25.90}  & 25.85  & 26.89  & \textit{27.01}  & \textit{\textbf{27.33} } & 27.31  \\
    20    & 15.46  & 22.01  & \textit{23.88}  & 23.83  & 25.09  & \textit{25.14}  & \textit{\textbf{25.34 }} & 25.31  \\
    25    & 13.52  & 20.21  & 22.23  & \textit{22.26}  & \textit{23.73}  & 23.70  & \textit{\textbf{23.81 }} & 23.76  \\
    30    & 11.94  & 18.74  & \textit{21.02}  & {20.98}  & \textit{\textbf{22.54 }} & 22.53  & \textit{\textbf{22.54 }} & 22.48  \\
    35    & 10.60  & 17.49  & 19.84  & \textit{19.90}  & \textit{\textbf{21.59} } & 21.55  & \textit{21.46}  & 21.40  \\
    40    & 9.44  & 16.41  & \textit{18.97}  & 18.96  & \textit{\textbf{20.77 }} & 20.70  & \textit{20.52}  & 20.46  \\
    45    & 8.42  & 15.45  & 18.11  & \textit{18.13}  & \textit{\textbf{19.99 }} & 19.94  & \textit{19.68}  & 19.62  \\
    50    & 7.50  & 14.59  & 17.29  & \textit{17.39}  & \textit{\textbf{19.34 }} & 19.27  & \textit{18.92}  & 18.88  \\
    Average & 14.38  & 21.00  & \textit{22.98}  & 22.97  & {24.39 } & \textit{24.40}  & \textit{\textbf{24.48}}  & 24.44  \\
    \hline
    {Average Time} &   --  & {19.01}  & {733.34}  & {20.12}  & {761.91}  & {20.06}  & {754.24}  & {20.81}  \\ \hline
    \end{tabular}}%
    \label{Table3}
    \end{table}%

    \begin{figure}[t]
    \centering
    \subfigure[Truth]{
    \begin{minipage}[t]{0.22\linewidth}
    \centering
    \includegraphics[height=2.6cm,width=2.6cm]{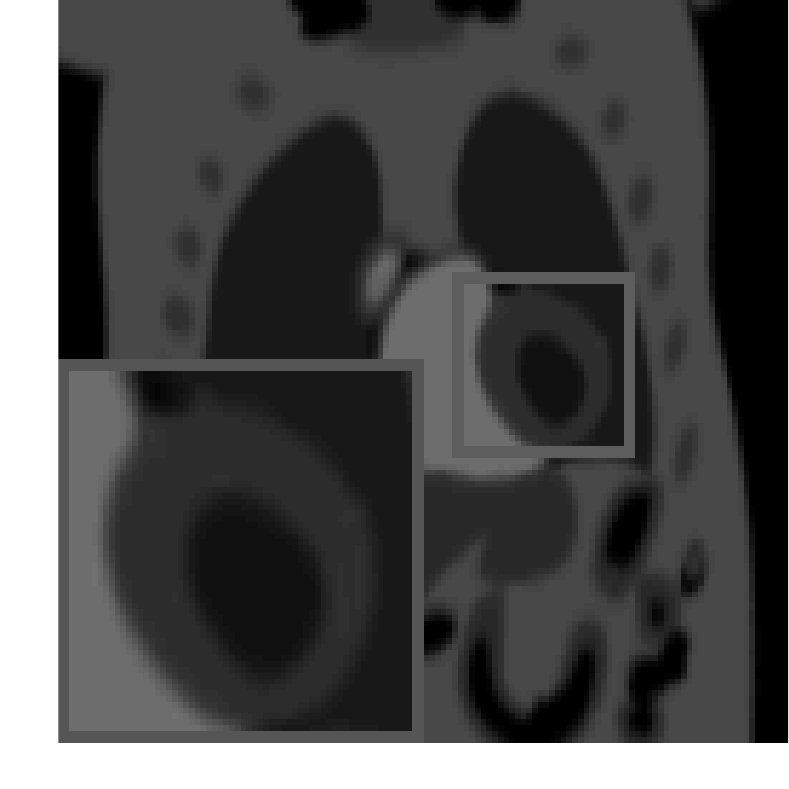}
    \end{minipage}
    }
    \subfigure[Noisy]{
    \begin{minipage}[t]{0.22\linewidth}
    \centering
    \includegraphics[height=2.6cm,width=2.6cm]{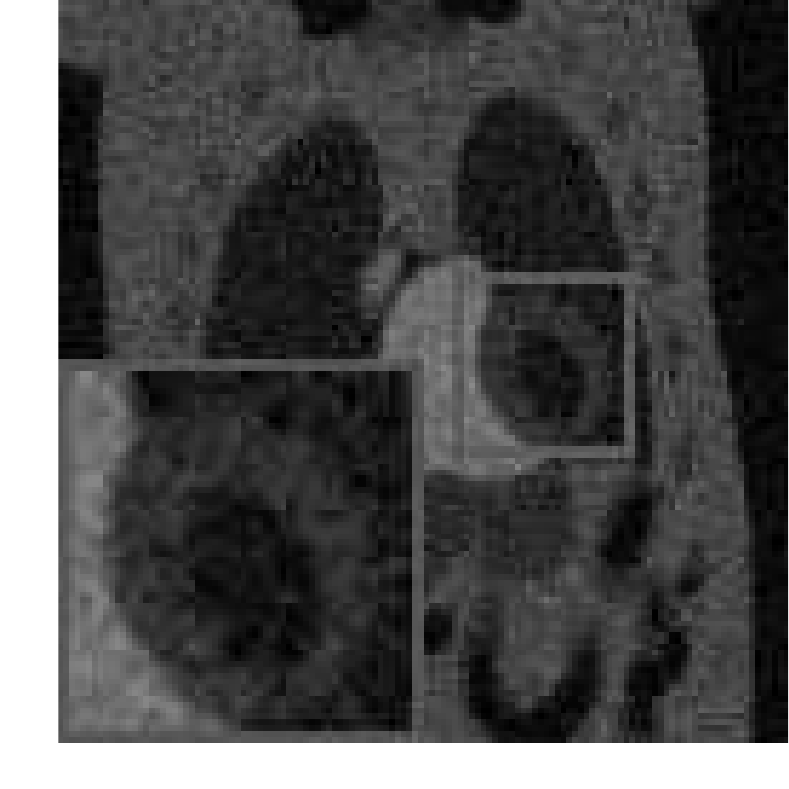}
    \end{minipage}
    }
    \subfigure[HardT]{
    \begin{minipage}[t]{0.22\linewidth}
    \centering
    \includegraphics[height=2.6cm,width=2.6cm]{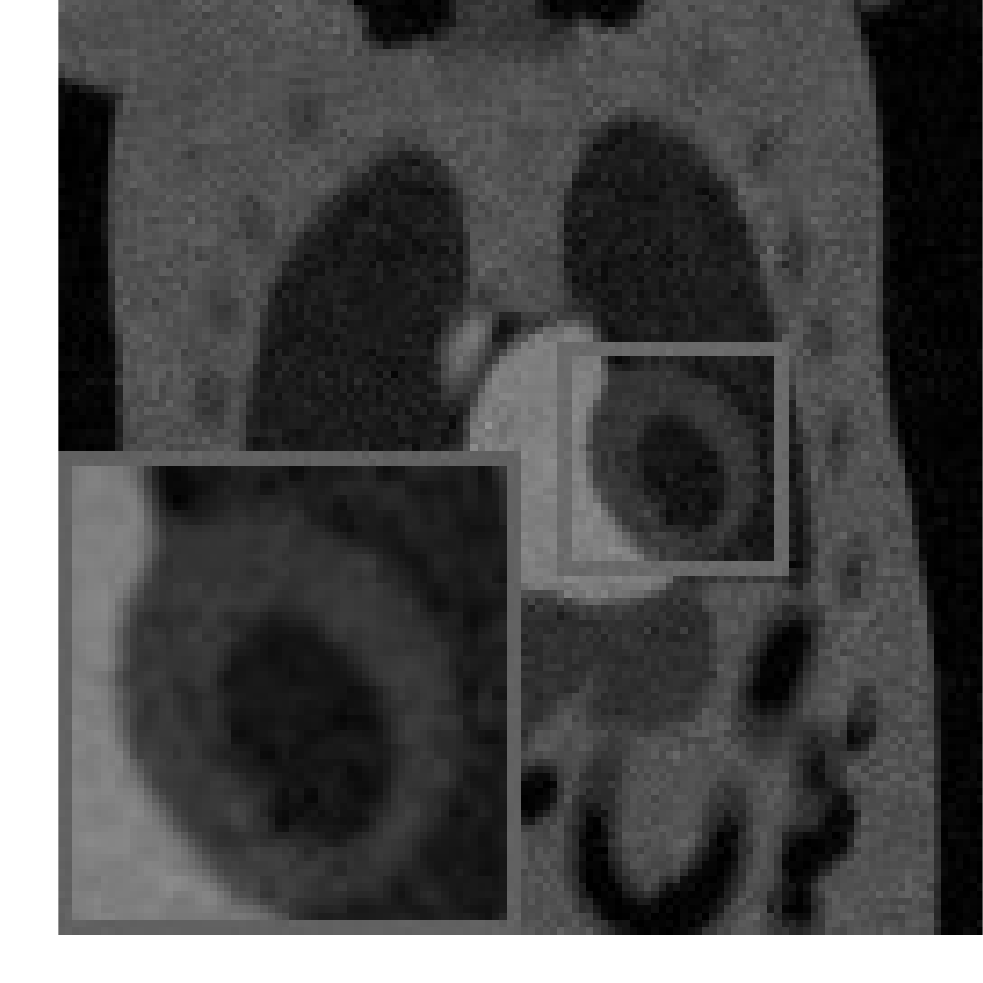}
    \end{minipage}
    }

    \subfigure[NN-SURE]{
    \begin{minipage}[t]{0.22\linewidth}
    \centering
    \includegraphics[height=2.6cm,width=2.6cm]{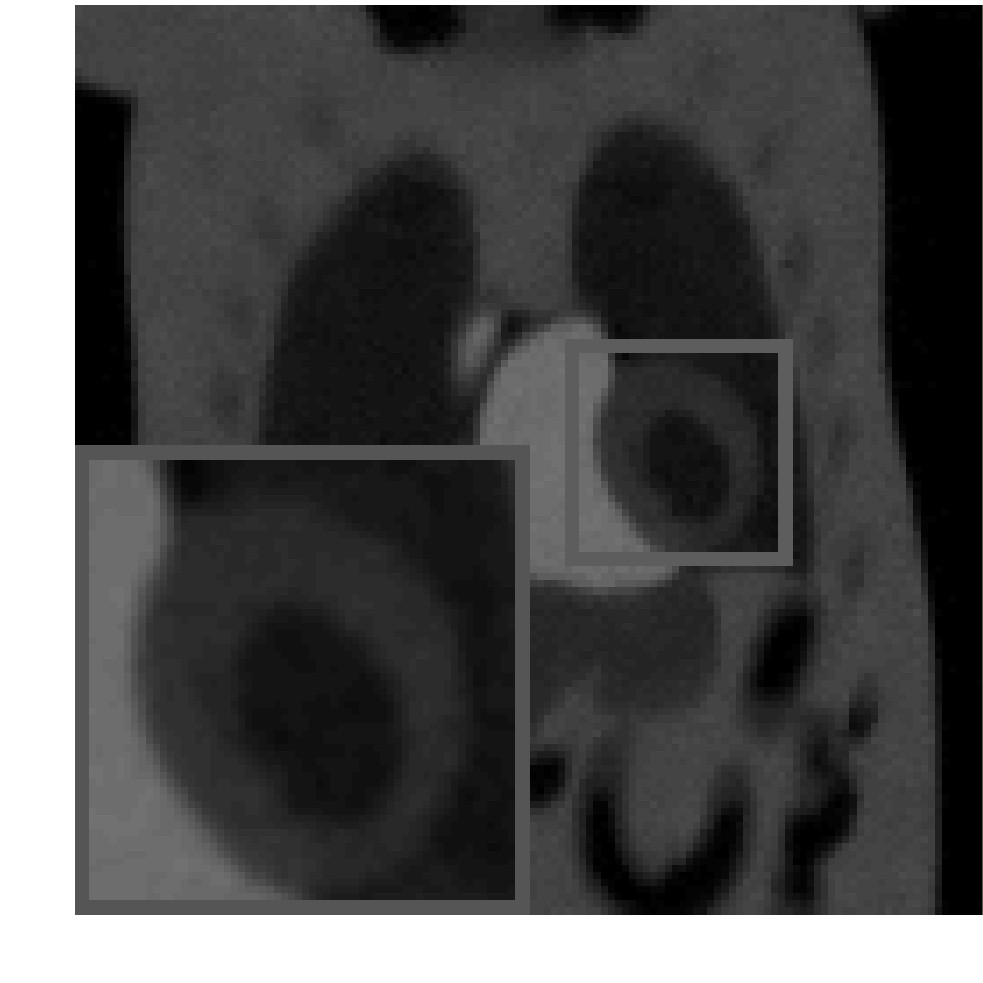}
    \end{minipage}
    }
    \subfigure[TNN-SURE]{
    \begin{minipage}[t]{0.22\linewidth}
    \centering
    \includegraphics[height=2.6cm,width=2.6cm]{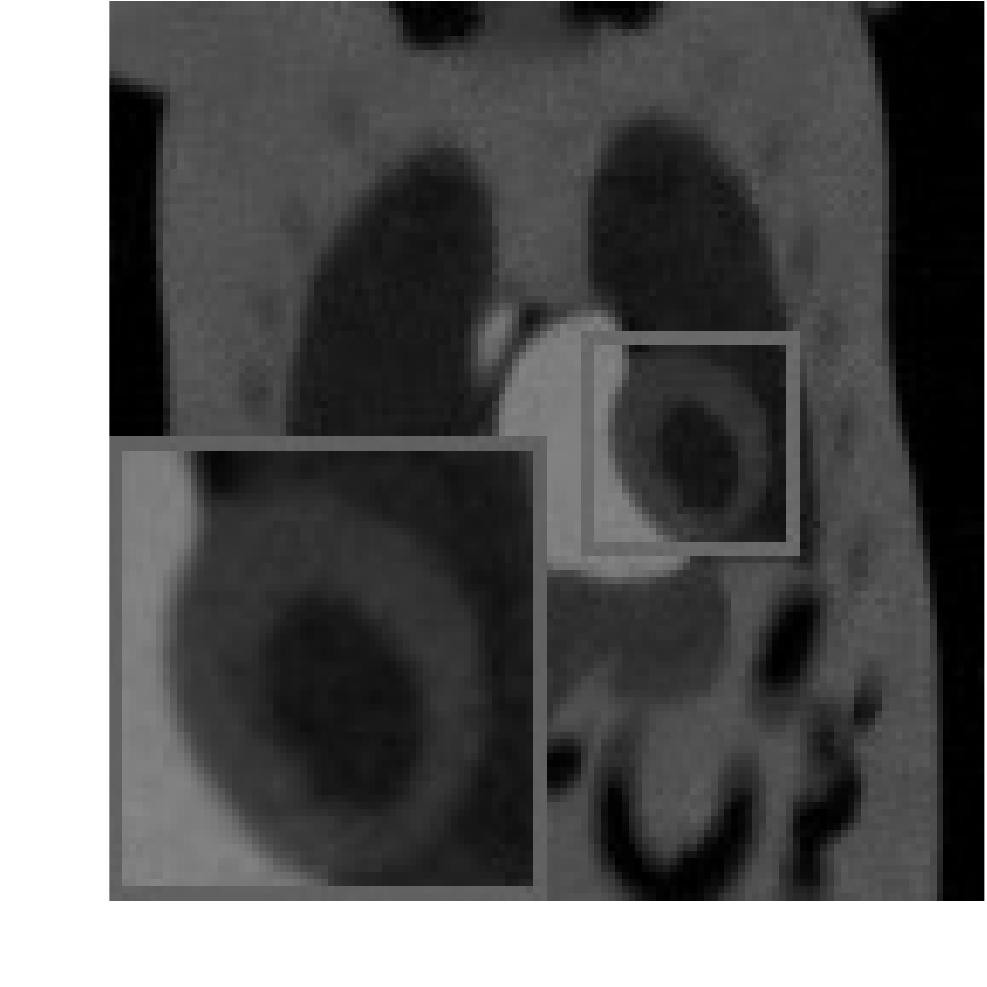}
    \end{minipage}
    }
    \subfigure[WNNM-SURE]{
    \begin{minipage}[t]{0.22\linewidth}
    \centering
    \includegraphics[height=2.6cm,width=2.6cm]{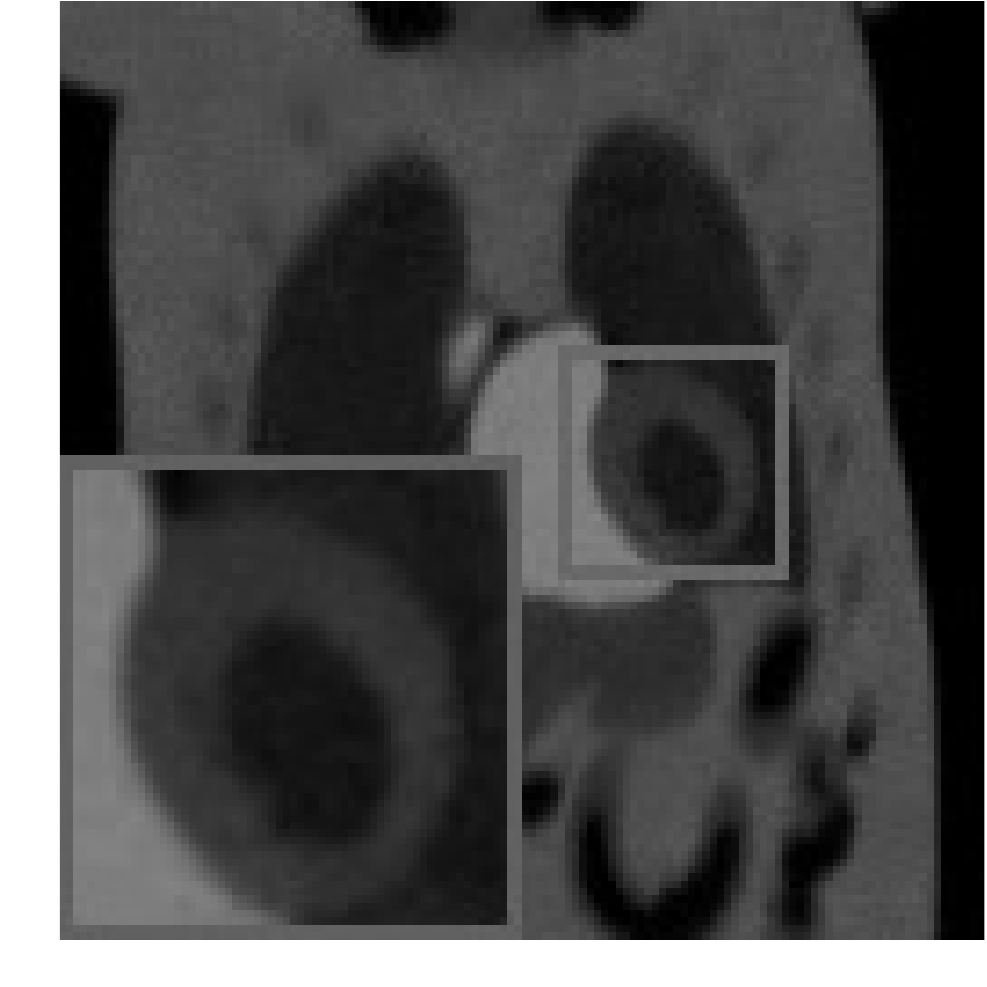}
    \end{minipage}
    }

    \subfigure[NN-DP]{
    \begin{minipage}[t]{0.22\linewidth}
    \centering
    \includegraphics[height=2.6cm,width=2.6cm]{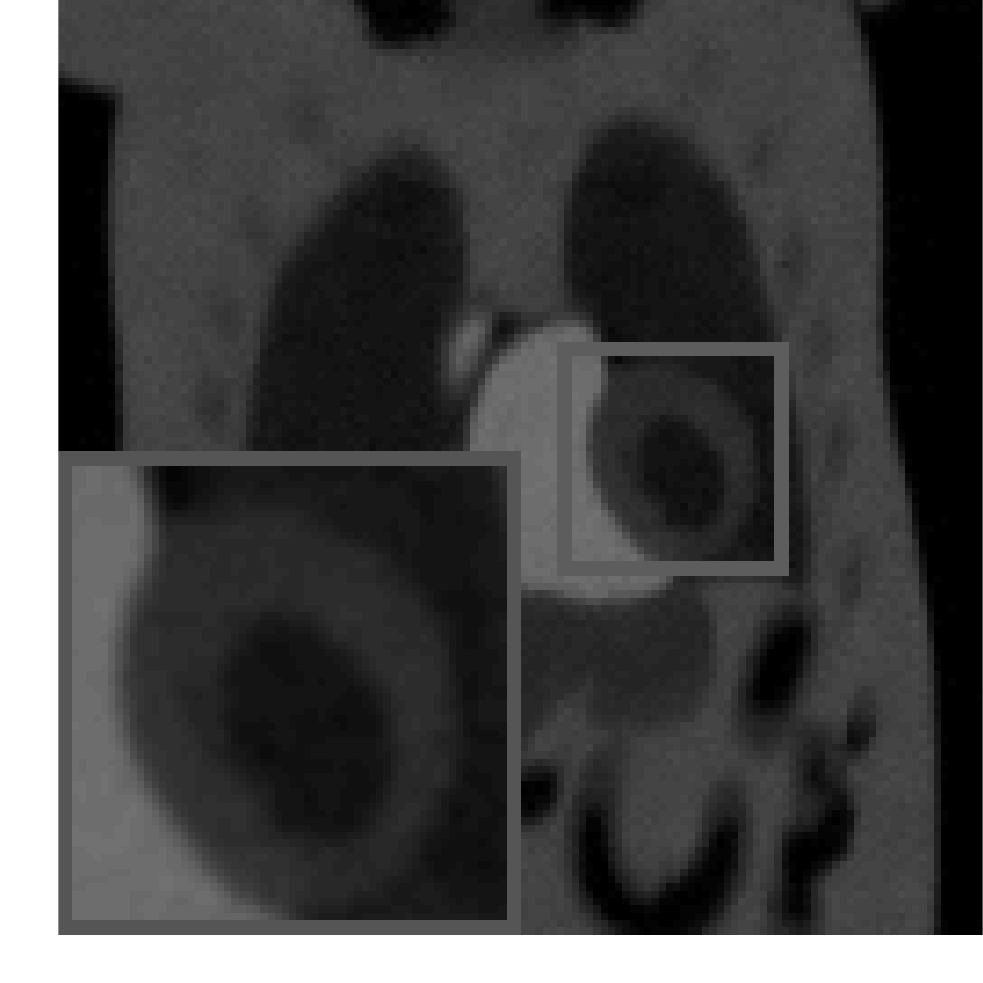}
    \end{minipage}
    }%
    \subfigure[TNN-DP]{
    \begin{minipage}[t]{0.22\linewidth}
    \centering
    \includegraphics[height=2.6cm,width=2.6cm]{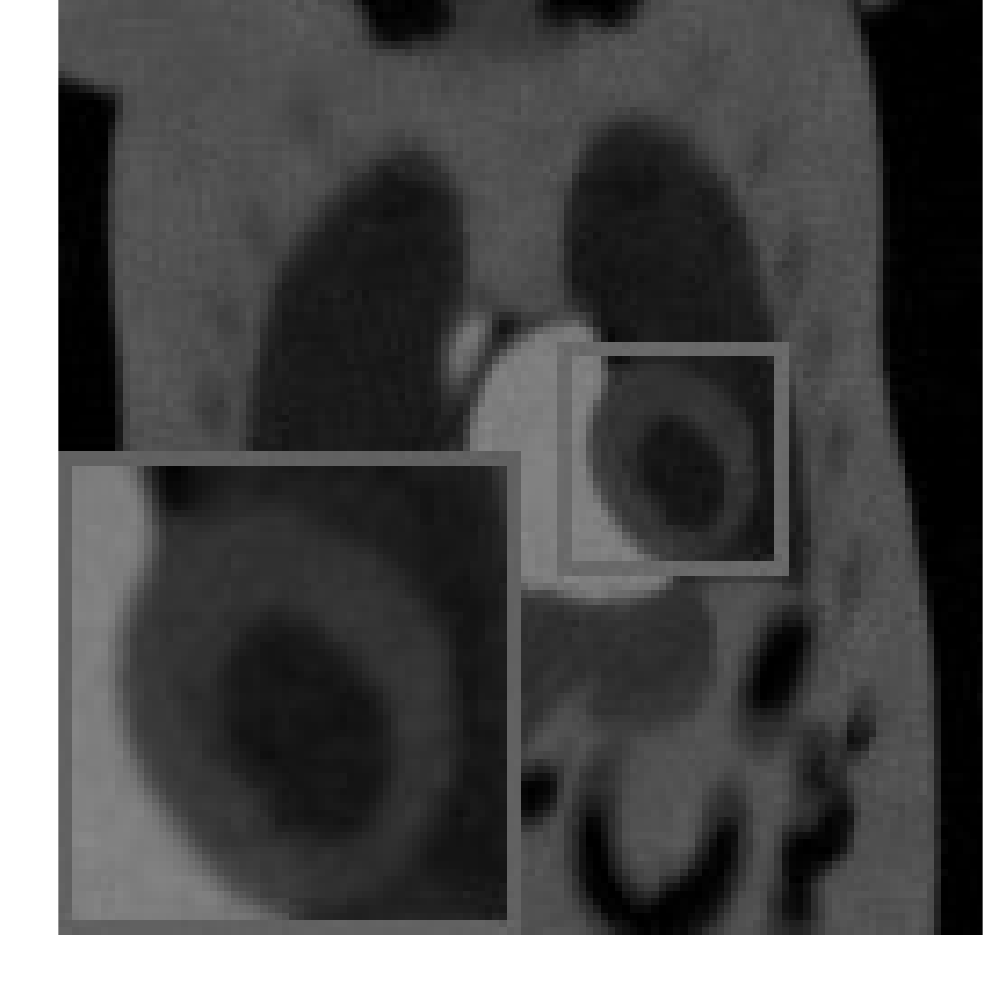}
    \end{minipage}
    }
    \subfigure[GWNN-DP]{
    \begin{minipage}[t]{0.22\linewidth}
    \centering
    \includegraphics[height=2.6cm,width=2.6cm]{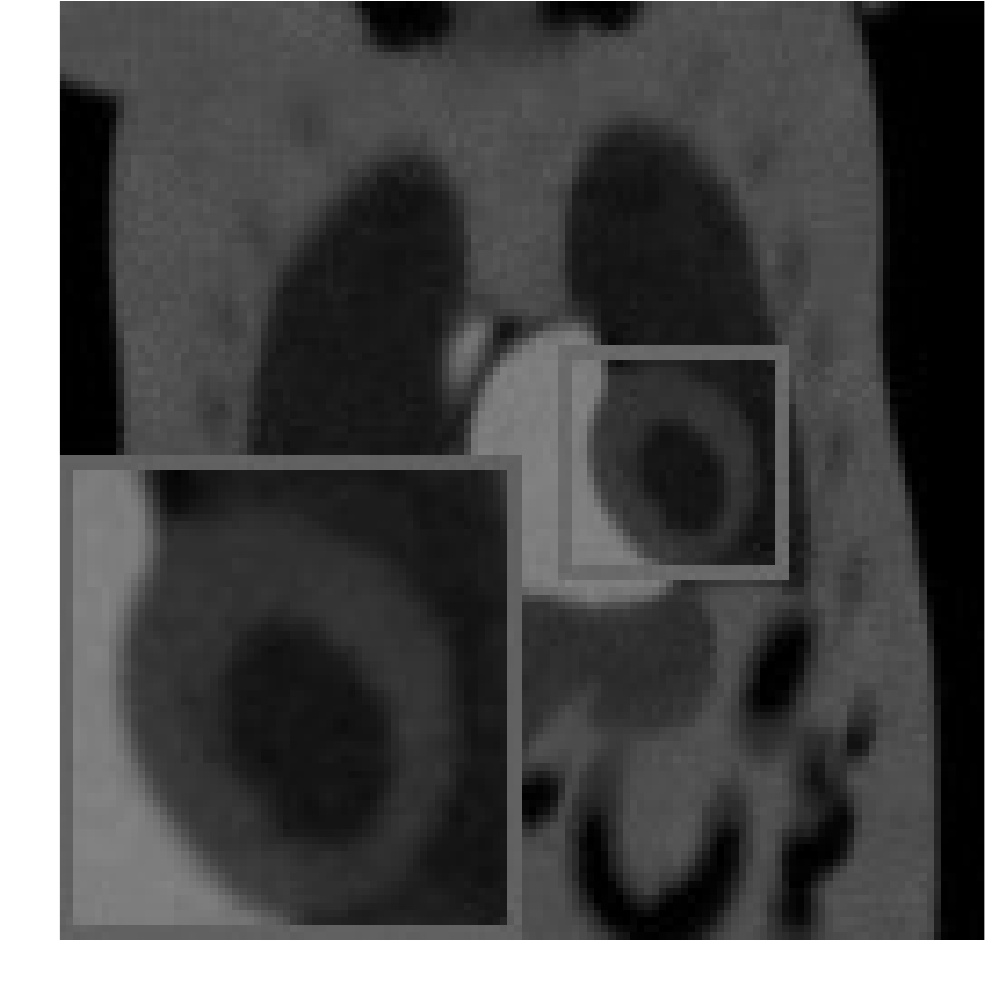}
    \end{minipage}
    }

    \centering
    \caption{
    The truth image with an enlarged portion cropped out from the image for $T=10$, its noisy image by $\tau=30$, and the denoised images by different methods. }
    \label{Local1_figure}
    \end{figure}

    \begin{figure}[h]
    \centering
    \subfigure[NN-SURE]{
    \begin{minipage}[t]{0.27\linewidth}
    \centering
    \includegraphics[height=2.8cm,width=3.5cm]{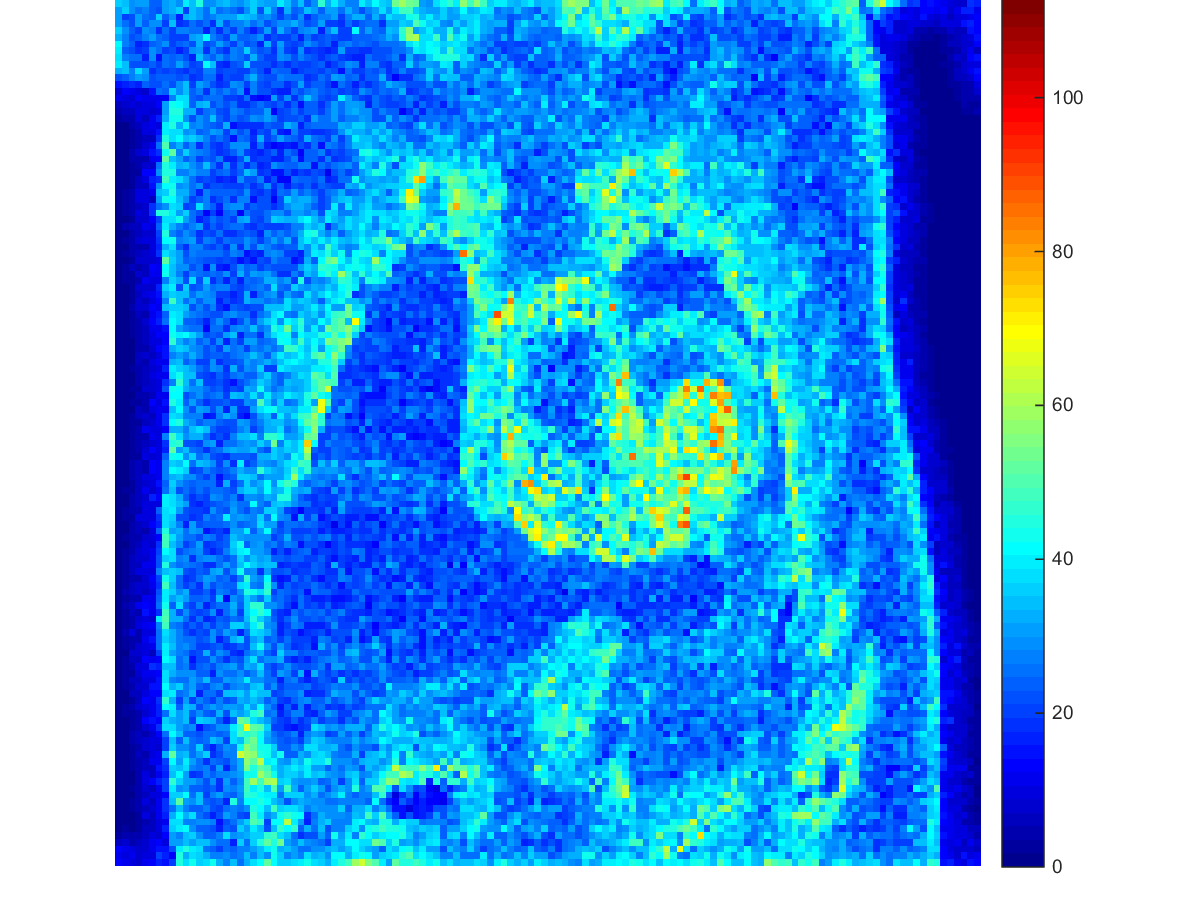}
    \end{minipage}
    }
    \subfigure[TNN-SURE]{
    \begin{minipage}[t]{0.27\linewidth}
    \centering
    \includegraphics[height=2.8cm,width=3.5cm]{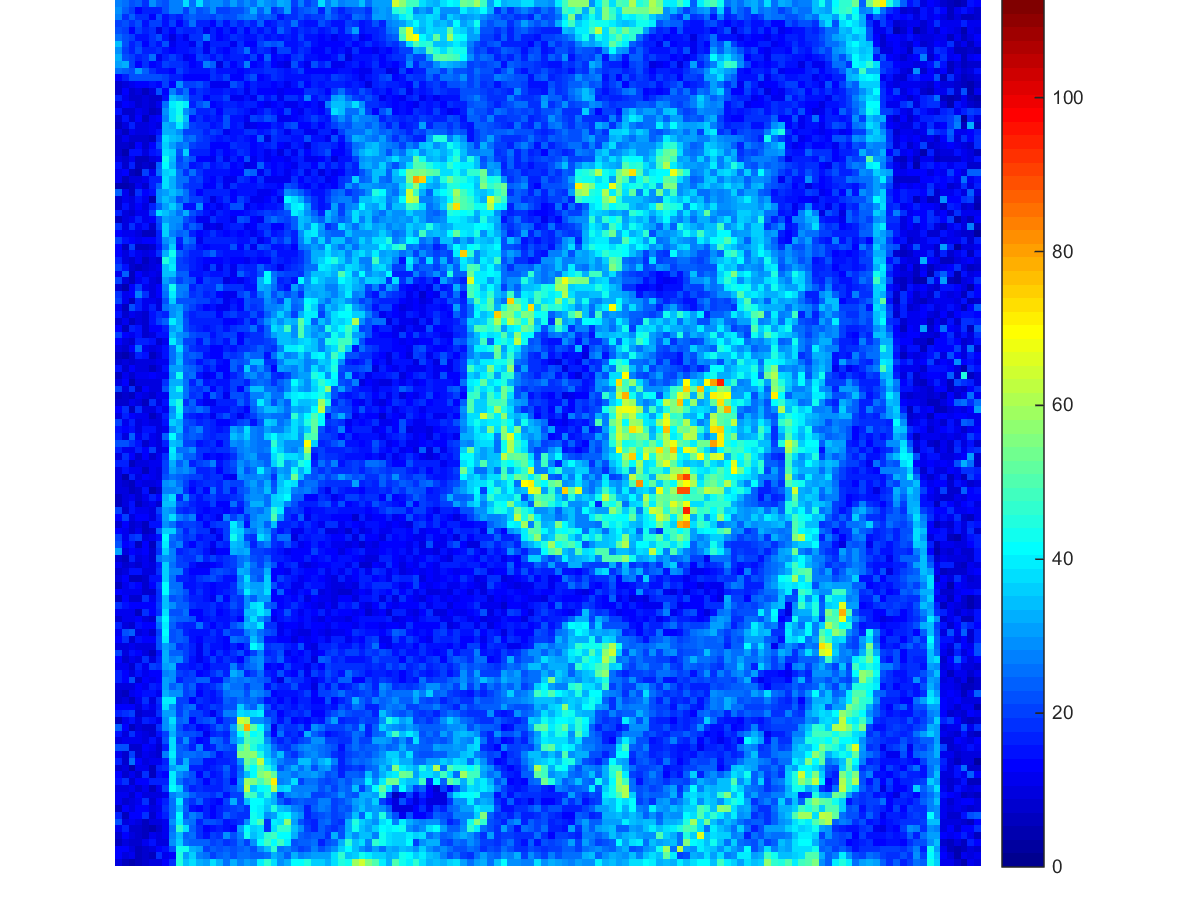}
    \end{minipage}
    }
    \subfigure[GWNN-SURE]{
    \begin{minipage}[t]{0.27\linewidth}
    \centering
    \includegraphics[height=2.8cm,width=3.5cm]{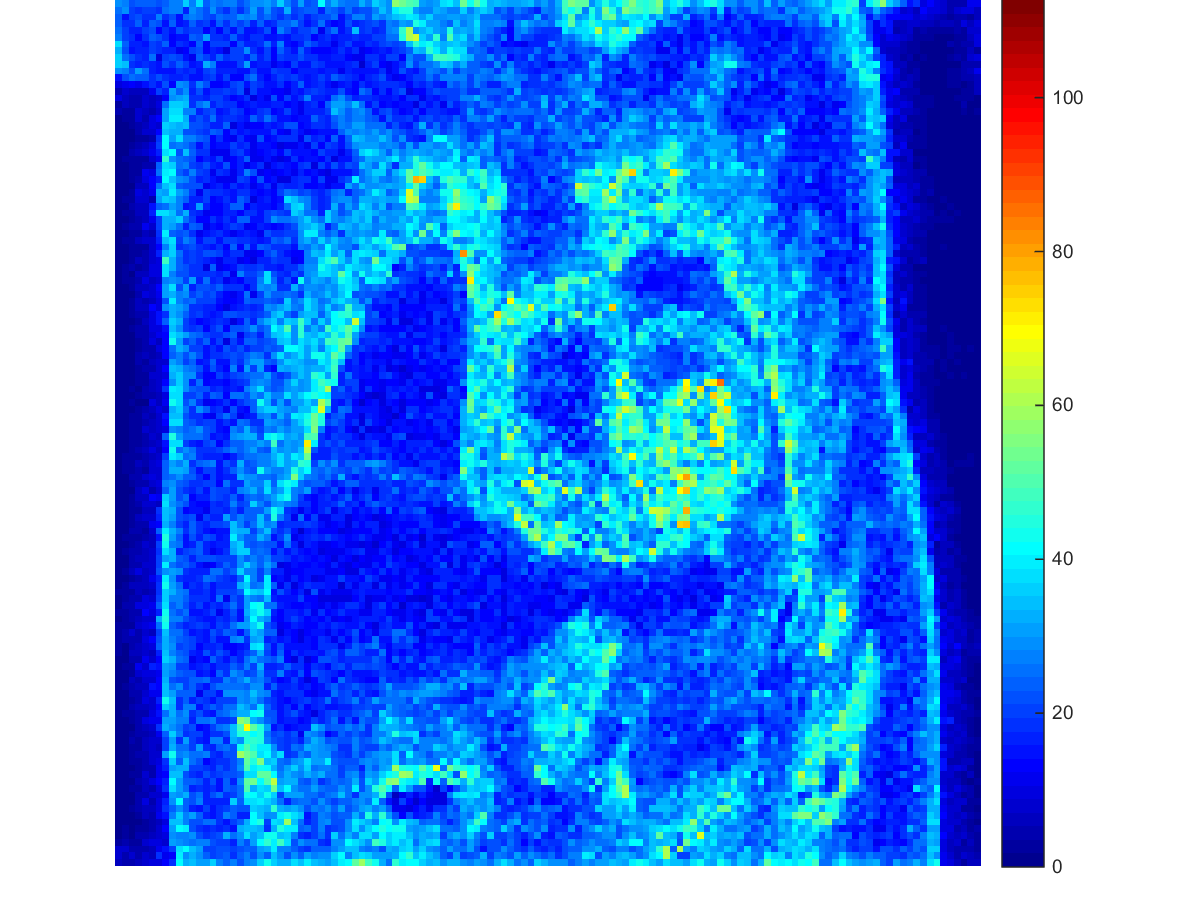}
    \end{minipage}
    }

    \subfigure[NN-DP]{
    \begin{minipage}[t]{0.27\linewidth}
    \centering
    \includegraphics[height=2.8cm,width=3.5cm]{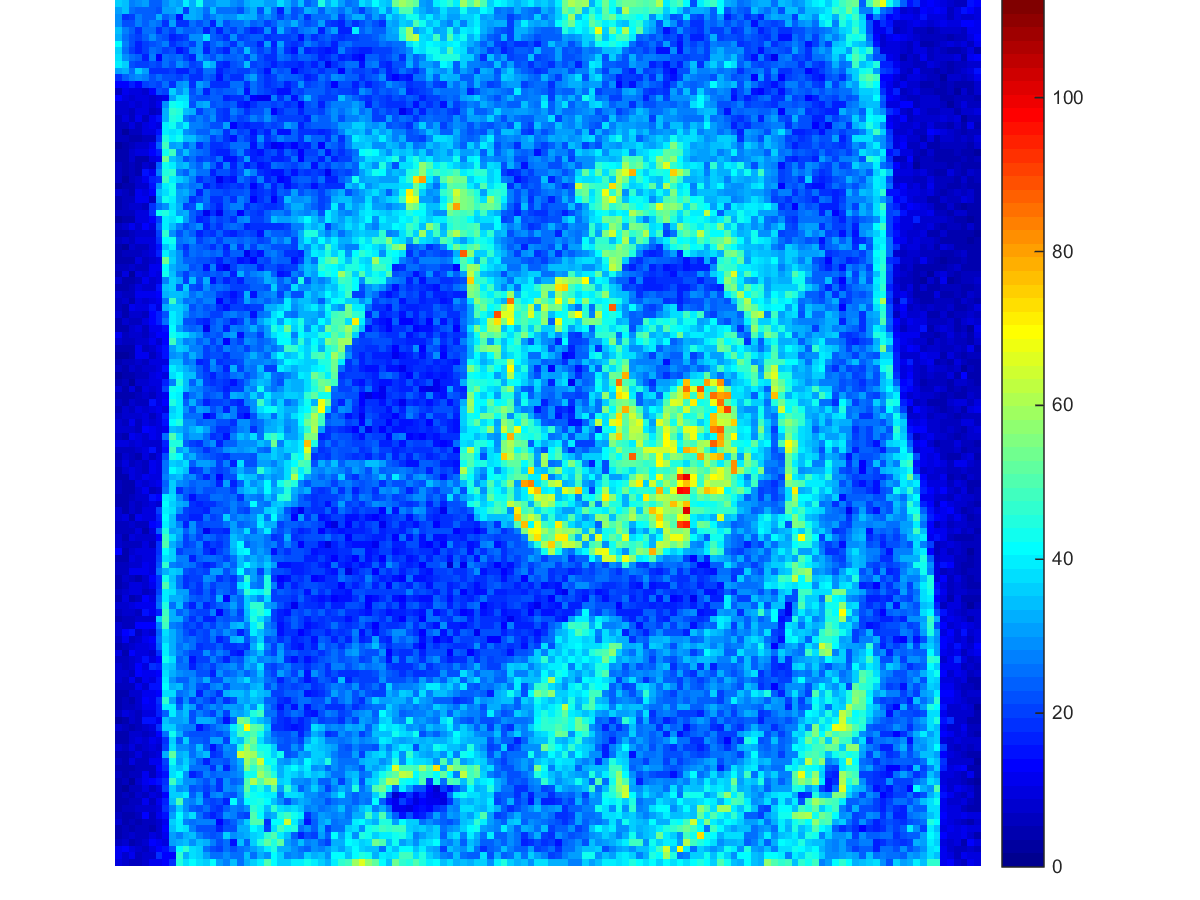}
    \end{minipage}
    }
    \subfigure[TNN-DP]{
    \begin{minipage}[t]{0.27\linewidth}
    \centering
    \includegraphics[height=2.8cm,width=3.5cm]{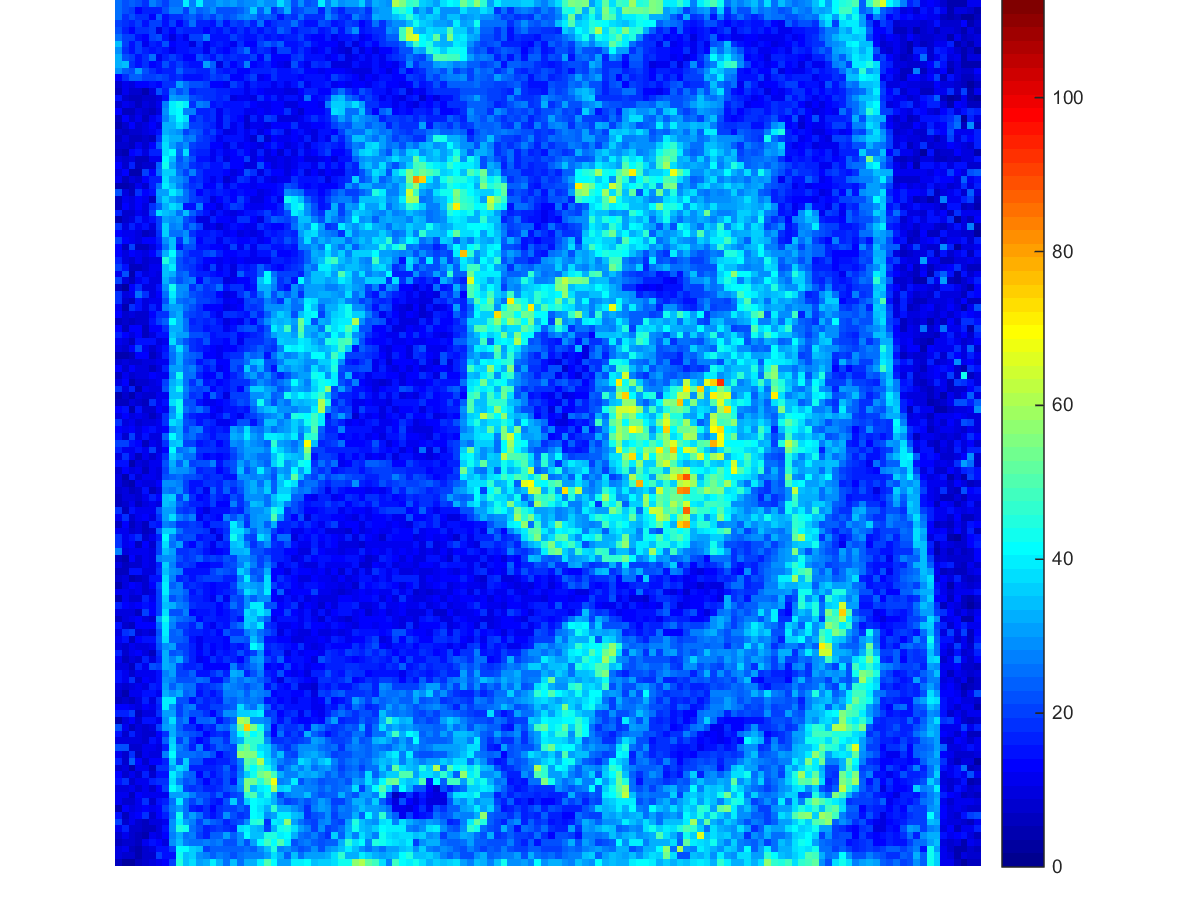}
    \end{minipage}
    }
    \subfigure[GWNN-DP]{
    \begin{minipage}[t]{0.27\linewidth}
    \centering
    \includegraphics[height=2.8cm,width=3.5cm]{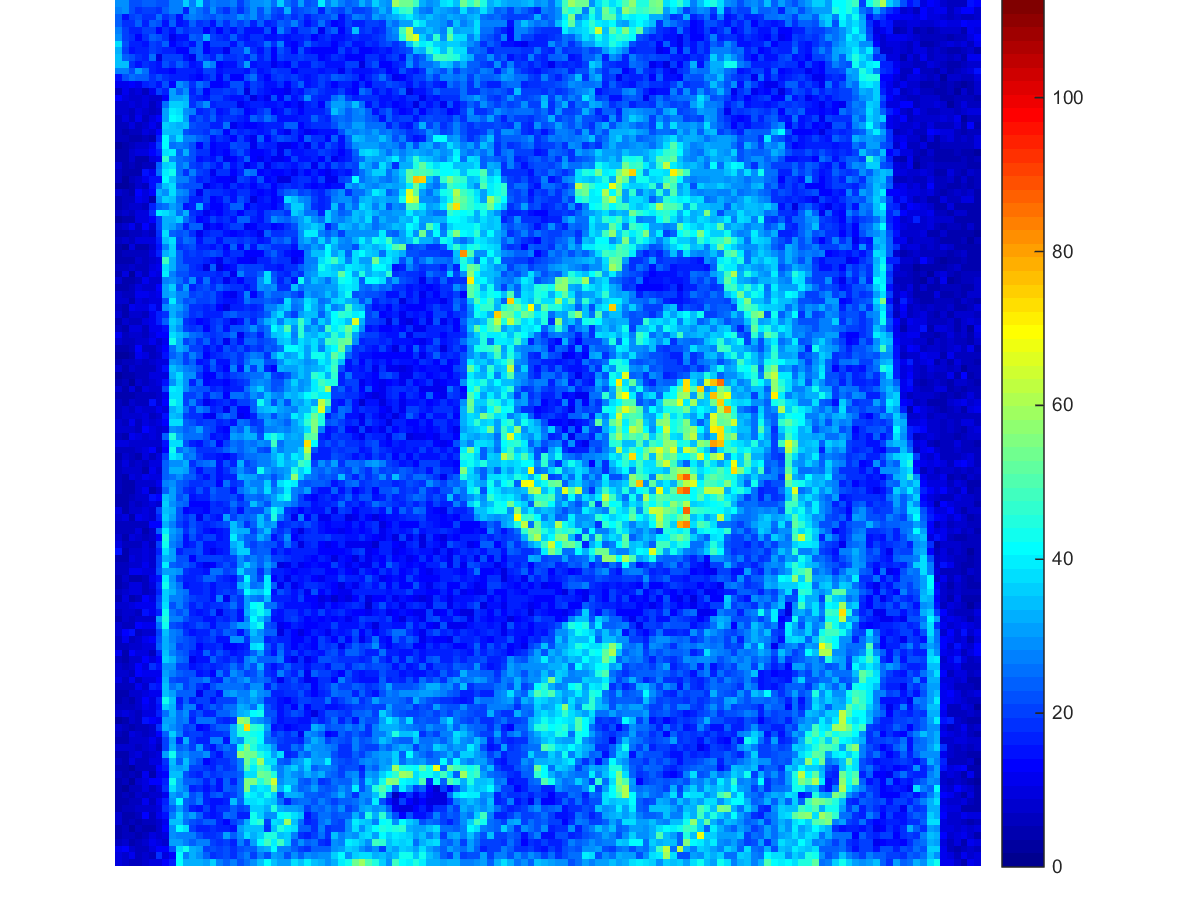}
    \end{minipage}
    }
    \centering
    \caption{The worst-case error through time of different methods for the the PINCAT Numerical Phantom.}
    \label{error1}
    \end{figure}

    \par In the following, we compare the performance of different methods in dynamic MRI-denoising. We use the same data set and parameter settings presented in \cite{CandesUnbiased}. There are 50 images of size $128 \times 128$ taken at 50 time-steps.
    The physiologically-improved NCAT (PINCAT) numerical phantom \cite{sharif2007physiologically} simulates a first-pass myocardial perfusion real-time MRI series.
    The free-breathing model is available in the kt-SLR software package \cite{lingala2011accelerated}.
    The noise is mainly caused by  thermal noise and physiological noise, which follows the Gaussian distribution.
    Complex identical and independent  Gaussian noise were added to the image data.
    %Non-local patch based denoising method is based on the redundancy and similarity among the patches in the image.

    The locally low-rank recovery (LLR) method  \cite{trzasko2011local}  is applied to remove the noise.
    \tr{LLR is the generalization of low-rank matrix  reconstruction, which spatially partitions the image sequence into small blocks by a fixed sliding window with stride 1. In the experiment, the sliding window size is set to $7\times 7$. We reshape each partitioned block into a $49 \times 50$ Casorati matrix (a matrix whose columns comprise vectorized patch of the image sequence).  Because the changes of each frame in the sequence {are} small, the Casoratic matrix constructed from a clean image sequence is {of} low-rank.
    We solve the low-rank matrix minimization problem by using each of these Casoratic matrices as an observed matrix, and there are $128^2=16,384$ of such minimization problems to solve.}

    %In order to obtain more image recovery data to get reliable comparison conclusion, the SNR is used to evaluate the performance of different methods under different noise levels.
    The SNR value is used to evaluate quantitatively the performance of different methods, which are listed in  Table \ref{Table3}. \tr{The best results among all methods are shown in boldface and  the best results for the given regularization norm are marked in italic font.} {We see from the average SNR values that our method is only 0.01dB lower than SURE for ``NN" and ``TNN" and 0.04dB lower for ``GWNN".} {However, since we have to perform denoising on \tr{$16,384$}  minimization problems of size $49 \times 50$ each, the ``SURE'' method is very time-consuming. Our ``DP'' method is at least 35 times faster than the ``SURE'' method.}

    Figure \ref{Local1_figure} shows the truth image with an enlarged portion cropped out from the image at the 10th time-step, the noisy image with $\tau=30$, and the denoised images by different methods,  respectively.
    We observe that there is still residual noise in the denoised image obtained by ``HardT", which implies that the parameter chosen {as in \cite{gavish2014the}} is not suitable for LLR. This is because the threshold selected  is related to the noise level only, and therefore the same threshold is selected for each block.

    We also compare the  worst-case absolute error\footnote{The  worst-case absolute error is computed by $\max_t |X_{ij}(t)-\widehat{X}_{ij}(t)|$, where $X(t)$ and $\widehat{X}(t)$ are the truth image and the recovered image at time $t$.}  for our ``DP'' and ``SURE'' methods, which is shown in  Figure \ref{error1}.
    From the figures, we observe that {``NN''} exhibits higher residual error than {``TNN''} and ``GWNN''.
    Once again, the figures verify that the results obtained by the truncated nuclear  norm or weighted nuclear norm  are better than those by the standard nuclear norm or the hard thresholding scheme.

    %When noise level $\tau=30$, the denoising results based on spatial blocks are shown in  \ref{Local1_figure}.  {From the recovery results, we can see that the  ``HardT'' didn't effectively remove noise.} It is shown from the partially enlarged part that the ``GWNNM-DP'', ``TNNM-DP'' and ``TNNM-SURE'' have similar effects on  noise  removing and  details retaining.  Besides,  \ref{error1} shows the worst-case absolute error of the recovery results for adaptive algorithms over time. In  \ref{error1}, we observe that the method based on GWNNM seems to achieve lower error.  {In addition, it can be seen that the error of the second row is much smaller than the first row, which  further illustrates the superiority of using TNN and GWNN to replace matrix rank function.}
    %

    \section{Conclusion}\label{Conclusion}
    The constrained model and the regularized model for the low-rank matrix recovery {were} considered in this paper.
    We have derived a formula for the regularization parameter when a bound of the residual norm is given.
    The results {were} used to select the regularization parameter automatically using the discrepancy principle and to solve the constrained problem when the bound of the constraint is given.
    Experimental results {showed} that the proposed approach is competitive to the SURE methods in terms of noise removal and adaptive parameter selection, and is much faster in terms of CPU time.

    %%%%%%%%%%%%%%%%%%%%%%%%%%%%%%%%%%%%%%%%%%%%%%%%%%%%%%%%%%%%%%%%%%%%%%%%%%%%%%%%%%%%%%%%%%%%%%%%%

    %\section*{Acknowledgments}

    %\bibliographystyle{siamplain}
    \bibliographystyle{plain}
    %\bibliography{docfile}

\end{document}